\newtheorem{proposition}{Proposition}[section]
\newtheorem{theorem}[proposition]{Theorem}
\newtheorem{lemma}[proposition]{Lemma}
\theoremstyle{remark}
\newtheorem{definition}[proposition]{Definition}
\newtheorem{remark}[proposition]{Remark}
\newtheorem{example}[proposition]{Example}
\newcommand{\cst}{\ifmmode\mathrm{C}^*\else{$\mathrm{C}^*$}\fi}
\newcommand{\wst}{\ifmmode\mathrm{C}^*\else{$\mathrm{W}^*$}\fi}
\newcommand{\st}{\;\vline\;}
\newcommand{\CC}{\mathbb{C}}
\newcommand{\tens}{\otimes}
\newcommand{\id}{\mathrm{id}}
\newcommand{\comp}{\!\circ\!}
\newcommand{\I}{\mathds{1}}
\newcommand{\hh}[1]{\widehat{#1}}
\newcommand{\GG}{\mathbb{G}}
\newcommand{\HH}{\mathbb{H}}
\newcommand{\XX}{\mathbb{X}}
\newcommand{\YY}{\mathbb{Y}}
\newcommand{\vtens}{\,\bar{\tens}\,}
\newcommand{\uu}{{\scriptscriptstyle\mathrm{u}}}
\newcommand{\op}{{\scriptscriptstyle\mathrm{op}}}
\newcommand{\cI}{\mathcal{I}}
\newcommand{\sA}{\mathsf{A}}
\newcommand{\sD}{\mathsf{D}}
\newcommand{\sM}{\mathsf{M}}
\newcommand{\sN}{\mathsf{N}}
\newcommand{\sH}{\mathsf{H}}
\newcommand{\sJ}{\mathsf{J}}
\newcommand{\gM}{\mathfrak{M}}
\newcommand{\dd}[1]{\widetilde{#1}}
\newcommand{\is}[2]{\left(#1\,\vline\,#2\right)}
\DeclareMathOperator{\C}{C}
\DeclareMathOperator{\cZ}{\mathcal{Z}}
\DeclareMathOperator{\M}{M}
\DeclareMathOperator{\Mor}{Mor}
\DeclareMathOperator{\B}{B}
\DeclareMathOperator{\cK}{\mathcal{K}}
\DeclareMathOperator{\Linf}{L^\infty\!\!\;}
\DeclareMathOperator{\Ltwo}{L^2\!\!\;}
\DeclareMathOperator{\vN}{vN}
\numberwithin{equation}{section}
\begin{document}

\author{Pawe{\l} Kasprzak}
\address{Department of Mathematical Methods in Physics, Faculty of Physics, University of Warsaw, Poland
and Institute of Mathematics of the Polish Academy of Sciences,
ul.~\'Sniadeckich 8, 00--956 Warsaw, Poland}
\email{pawel.kasprzak@fuw.edu.pl}

\author{Piotr M.~So{\l}tan} \address{Department of Mathematical Methods in Physics, Faculty of Physics, University of Warsaw, Poland}
\email{piotr.soltan@fuw.edu.pl}

\thanks{Supported by National Science Centre (NCN) grant no.~2011/01/B/ST1/05011}

\title{Embeddable Quantum Homogeneous Spaces}

\keywords{Locally compact quantum group, Quantum homogeneous space, Closed quantum subgroup}
\subjclass[2010]{Primary: 46L89, 46L85, Secondary: 22D35, 58B32}

\begin{abstract}
We discuss various notions generalizing the concept of a homogeneous space to the setting of locally compact quantum groups. On the von Neumann algebra level we find an interesting duality for such objects. A definition of a quantum homogeneous space is proposed along the lines of the pioneering work of Vaes on induction and imprimitivity for locally compact quantum groups. The concept of an embeddable quantum homogeneous space is selected and discussed in detail as it seems to be the natural candidate for the quantum analog of classical homogeneous spaces. Among various examples we single out the quantum analog of the quotient of the Cartesian product of a quantum group with itself by the diagonal subgroup, analogs of quotients by compact subgroups as well as quantum analogs of trivial principal bundles.
\end{abstract}

\maketitle

\section{Introduction}\label{intro}

The notion of a homogeneous space of a locally compact group is of fundamental importance in many branches of mathematics. The non-commutative geometric generalization of the theory of locally compact groups was enriched greatly by the paper of S.~Vaes \cite{Vaes}, in which the notion of a closed subgroup and, more importantly a quantum homogeneous space was thoroughly discussed (alongside many other developments). For compact quantum groups these notions were already developed in the PhD thesis of P.~Podle\'s (\cite{podPHD}) and later published in \cite{Pod1}. It was shown in that paper that quantum groups often have fewer subgroups than one would expect. This was, in particular, proved for the quantum $\mathrm{SU}(2)$ group which is a deformation of the classical $\mathrm{SU}(2)$, yet whose list of subgroups is dramatically shorter than that of the classical $\mathrm{SU}(2)$. Thus Podle\'s realized that some quantum homogeneous spaces did not come from quantum subgroups. It led him to introduce the notions of
\begin{itemize}
\item[$\blacktriangleright$] homogeneous space,
\item[$\blacktriangleright$] embeddable homogeneous space,
\item[$\blacktriangleright$] quotient homogeneous space
\end{itemize}
in the compact quantum group context and he proved that these three classes are consecutively strictly smaller. Moreover in his work on quantum spheres (\cite{spheres}) he showed that by allowing (quantum) homogeneous spaces which are not of the quotient type we reveal a wealth of new examples of interesting quantum spaces. The three different classes of (quantum) homogeneous spaces mentioned above are reduced to usual homogeneous spaces provided the quantum group \emph{and} the homogeneous space are classical. It is important to note that the first (broadest) class allows non-commutative homogeneous spaces for classical groups. However (quantum) homogeneous spaces of the second and third class for a classical group are necessarily classical.

The class of \emph{embeddable} homogeneous spaces was defined by Podle\'s as containing quantum spaces $\XX$ with an action of a compact quantum group $\GG$ which can be realized inside $\C(\GG)$ by the comultiplication. In other words he considered coideals in $\C(\GG)$ (cf. Theorem \ref{uniqC0X}\eqref{uniqC0X2} and Proposition \ref{Wst-emb}). The classical correspondence between closed subgroups and coideals has by now received more attention from researchers in the theory of quantum groups. We would like to point out an interesting approach, due to several authors, using \emph{idempotent states} (see e.g.~\cite{FSalg,FS,SaSa,Sal}). The case of co-amenable compact quantum groups was treated from this point of view in \cite[Theorem 4.1]{FS}. A similar result for unimodular co-amenable locally compact quantum groups can be found in \cite[Theorem 3.5]{SaSa}.

This paper is devoted to generalization of the notion of embeddable homogeneous spaces for compact quantum groups to the context of locally compact quantum groups of Kustermans and Vaes \cite{KV,KVvN,mnw}. We will be following the path begun by Vaes in \cite{Vaes}, where he dealt with the generalization to the non-compact case of the quotient construction considered by Podle\'s.

The task seems quite a lot more complicated than for compact quantum groups. We encounter many different classes of objects related to a given locally compact quantum group. We will introduce all these notions in Sections \ref{prelSect}, \ref{wstSect} and \ref{qhsSect}. Nevertheless, before giving precise definitions, we wish to present a diagram describing (informally) relations between the various concepts which will be dealt with in the paper. This is done in Figure \ref{figQHS}.

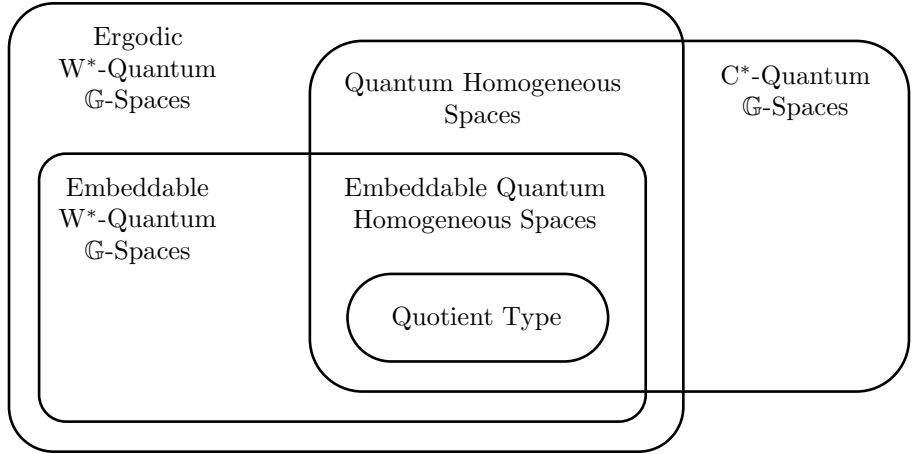
\begin{figure}[h]
\begin{center}
\begin{pspicture}(0,0)(12,6)\label{figQHS}
\psframe[linewidth=1pt,framearc=.2](0,0)(9,6)
\psframe[linewidth=1pt,framearc=.3](4,0.8)(12,5.5)
\psframe[linewidth=1pt,framearc=.2](.4,0.4)(8.5,4)
\psframe[linewidth=1pt,framearc=1](4.5,1.2)(8,2.4)
\put(.5,5){$\begin{array}{c}
\text{Ergodic}\\
\text{\wst-Quantum}\\
\GG\text{-Spaces}
\end{array}$}
\put(4.3,4.6){$\begin{array}{c}
\text{Quantum Homogeneous}\\
\text{Spaces}
\end{array}$}
\put(4.3,3.2){$\begin{array}{c}
\text{Embeddable Quantum}\\
\text{Homogeneous Spaces}\end{array}$}
\put(.5,3){$\begin{array}{c}
\text{Embeddable}\\
\text{\wst-Quantum}\\
\GG\text{-Spaces}
\end{array}$}
\put(9.3,4.7){$\begin{array}{c}
\text{\cst-Quantum}\\
\GG\text{-Spaces}
\end{array}$}
\put(5.1,1.7){Quotient Type}
\end{pspicture}
\end{center}
\caption{Schematic presentation of relations between types of quantum group actions.}
\end{figure}

The intersection between (ergodic) \wst-quantum $\GG$-spaces and \cst-quantum $\GG$-spaces in Figure \ref{figQHS} should not be understood literally. It represents the class of \wst-quantum $\GG$-spaces for which there exists a suitably compatible ``\cst-version'' (see Definition \ref{DefQHS}).

One of the reasons for considering so many different objects is that some new constructions are natural and relatively easy to perform on one level (e.g.~the \wst-level) and seem quite complicated if not impossible on other levels (e.g.~the \cst-level). This is exemplified in particular in Section \ref{wstSect} where we find a very satisfying duality (in the spirit of \cite{TT}) between embeddable \wst-quantum $\GG$-spaces. This duality produces for each embeddable \wst-quantum $\GG$-space $\XX$ an embeddable \wst-quantum $\hh{\GG}$-space which we denote by $\dd{\XX}$ and call the \emph{co-dual} of $\XX$. We also show that the second co-dual $\dd{\dd{\XX}}$ is equal to $\XX$ (this is a true equality, not isomorphism --- the price we pay for this is that our objects come with a particular embedding into operators on appropriate Hilbert spaces). This duality, when restricted to a classical group $\GG$, coincides with the well-known Takesaki-Tatsuuma duality between invariant subalgebras of $\Linf(\GG)$ and closed subgroups of $\GG$ (see \cite{TT}). Since our duality gives rise to the duality between invariant subalgebras of $\Linf(\GG)$ and $\Linf(\hh{\GG})$ it should be stressed that it is the co-commutativity of $\hh{\GG}$ that forces all invariant subalgebras of $\Linf(\hh{\GG})$ to be of the form $\Linf(\hh{\HH})$ for certain closed quantum subgroup $\HH\subset\GG$. In the case of non-classical group $\GG$ we get a fully symmetric picture of the duality that links embeddable \wst-quantum $\GG$-spaces and embeddable \wst-quantum $\hh{\GG}$-spaces.

The necessary material on quantum groups and the operator algebra techniques we use can be found in \cite{KV,unbo}. The research presented in this paper is based strongly on the paper of Vaes \cite{Vaes}, the work on quantum subgroups \cite{subgroups} (which, in turn, follows \cite{mrw} closely) and \cite{certain}. Some important definitions and concepts will be touched upon in Section \ref{prelSect}.

Section \ref{wstSect} introduces the notion of an embeddable \wst-quantum $\GG$-space and deals with co-duality for such objects. Several examples including classical ones are discussed. The analysis of the classical situation reproduces to a certain extent the results of \cite{Salmi}. Then, in Section \ref{qhsSect}, we introduce quantum homogeneous spaces and embeddable quantum homogeneous spaces which are the main focus of this paper. Section \ref{ccSect} deals with what we call the ``co-compact'' cases which are quantum analogues of quotients by compact subgroups (cf.~\cite{SalmiCpt} for another point of view on this situation). In Section \ref{diagSect} the quantum analog of the quotient of the Cartesian product of a group with itself by the diagonal subgroup is discussed. We construct this object and show that it is an embeddable quantum homogeneous space, as defined in Section \ref{qhsSect}. We also prove that it is not of quotient type unless the considered quantum group is, in fact, classical. In other words, the diagonal subgroup does not exist for quantum groups. Section \ref{strongSect} discusses a strengthening of the definition of an embeddable quantum homogeneous space which corresponds to the classical notion of a trivial principal bundle.

\section{Preliminaries}\label{prelSect}

Throughout the paper we will be using the standard tools of the theory of operator algebras employed in the study of quantum groups. Introductory material on these topics may be found e.g.~in \cite{KVoa}. For more advanced topics on multipliers we refer to \cite{unbo,lance}. A crucial notion of a \emph{strict mapping} introduced in \cite[Definition 3.1]{Vaes} will be used extensively. Let $\sN$ be a von Neumann algebra and let $\sA$ be a \cst-algebra. A mapping $\theta\colon\sN\to\M(\sA)$ is \emph{strict} if for any norm-bounded net $(x_i)_{i\in\cI}$ of elements of $\sN$ converging in the strong${}^*$ topology to $y\in\sN$ the net $\bigl(\theta(x_i)\bigr)_{i\in\cI}$ is strictly convergent to $\theta(y)$. This means that for any $a\in\sA$ the nets
\[
\bigl(\theta(x_i)a\bigr)_{i\in\cI}\quad\text{and}\quad\bigl(\theta(x_i)^*a\bigr)_{i\in\cI}
\]
converge in norm to $\theta(y)a$ and $\theta(y)^*a$ respectively.

We will use the very convenient notation which can be found e.g.~in \cite[Section 2]{BSV}: for a subset $\mathcal{X}$ of a Banach space the symbol $[\mathcal{X}]$ denotes the norm-closed linear span of $\mathcal{X}$. To keep the notation light we also write $[\,\cdots]$ instead of $[\{\,\cdots\}]$ whenever necessary.

Let us recall the definition of a locally compact quantum group on von Neumann algebra level:

\begin{definition}[{\cite[Definition 1.1]{KVvN}}]\label{lcqg}
A pair $\GG=(\sM,\Delta)$ consisting of a von Neumann algebra $\sM$ and a normal unital $*$-homomorphism $\Delta\colon\sM\to\sM\vtens\sM$ is called a \emph{locally compact quantum group} if
\begin{itemize}
\item[$\blacktriangleright$] $\Delta$ is \emph{coassociative}, i.e.~$(\Delta\tens\id)\circ\Delta=(\id\tens\Delta)\circ\Delta$;
\item[$\blacktriangleright$] there exist n.s.f.~weights $\varphi$ and $\psi$ on $\sM$ such that
\begin{itemize}
\item[-] $\varphi$ is left invariant: $\varphi\bigl((\omega\tens\id)\Delta(x)\bigr)=\varphi(x)\omega(\I)$ for all $x\in\gM_{\varphi}^+$ and $\omega\in\sM_*^+$,
\item[-] $\psi$ is right invariant: $\psi\bigl((\id\tens\omega)\Delta(x)\bigr)=\psi(x)\omega(\I)$ for all $x\in\gM_{\psi}^+$ and $\omega\in\sM_*^+$.
\end{itemize}
\end{itemize}
\end{definition}

In what follows we shall write $\Linf(\GG)$ for the von Neumann algebra $\sM$ from Definition \ref{lcqg} and $\Delta_{\GG}$ for $\Delta$. There are a few possible conventions available while developing general theory. The choices correspond to the left and right Haar weight option. Anticipating the needs of this paper we adopt the following choices and notation (found also in \cite{BS, mnw}):
\begin{itemize}
\item[$\blacktriangleright$] $\Ltwo(\GG)$ denotes the GNS-Hilbert space for the right Haar weight $\psi$; in what follows $\eta$ will denote the corresponding GNS-map.
\item[$\blacktriangleright$] $W\in\B\bigl(\Ltwo(\GG)\tens\Ltwo(\GG)\bigr)$ is the \emph{Kac-Takesaki} operator of $\GG$. It is a uniquely defined by its values on the simple tensors $\eta(x)\tens\eta(y)$:
\[
W\bigl(\eta(x)\tens\eta(y)\bigr)=(\eta\tens\eta)\bigl(\Delta_{\GG}(x)(\I\tens{y})\bigr).
\]
\item[$\blacktriangleright$] The Tomita-Takesaki antiunitary conjugation related to $\psi$ is denoted by $J$.
\end{itemize}
Starting with $\GG$ one constructs the dual quantum group $\hh{\GG}$. By definition
\[
\Linf(\hh{\GG})=\bigl\{(\id\tens\omega)(W)\st\,\omega\in\B(\Ltwo(\GG))_*\bigr\}''
\]
and $\Ltwo(\hh{\GG})=\Ltwo(\GG)$. The last equality means that the range of the GNS-map $\hh{\eta}$ of the right Haar weight $\hh{\psi}$ of $\hh{\GG}$ is $\Ltwo(\GG)$ and we have $\hh{\psi}(x^*x)=\is{\hh{\eta}(x)}{\hh{\eta}(x)}$ for any $x$ in the domain of $\hh{\eta}$. The related modular conjugation $\hh{J}$ is an antiunitary operator acting on $\Ltwo(\GG)$. Remarkably, $\hh{J}$ implements the \emph{unitary antipode} $R$ of $\GG$:
\[
R(x)=\hh{J}x^*\hh{J}\text{ for any }x\in\Linf(\GG)
\]
(\cite[section 5]{KV}). The Kac-Takesaki operator $\hh{W}\in\B\bigl(\Ltwo(\GG)\tens\Ltwo(\GG)\bigr)$ of $\hh{\GG}$ is, in turn, given by $\hh{W}=\Sigma{W^*}\Sigma$, where $\Sigma\in\B\bigl(\Ltwo(\GG)\tens\Ltwo(\GG)\bigr)$ denotes the flip operator.

The quantum group $\GG$ can be equivalently described using the language of \cst-algebras. In particular, given a locally compact quantum group $\GG$ there is a \cst-algebra, which we will denote by $\C_0(\GG)$, strongly dense in $\Linf(\GG)$ such that $\Delta_{\GG}$ restricted to $\C_0(\GG)$ is a morphism from $\C_0(\GG)$ to $\C_0(\GG)\tens\C_0(\GG)$ in the sense of \cite{unbo} (see also \cite{lance}). We will use the same symbol (namely $\Delta_{\GG}$) for comultiplications on $\Linf(\GG)$ and $\C_0(\GG)$ as well as its extension to a map $\M\bigl(\C_0(\GG)\bigr)\to\M\bigl(\C_0(\GG)\tens\C_0(\GG)\bigr)$.

\begin{definition}
Let $\GG$ be a locally compact quantum group. A \emph{\wst-quantum $\GG$-space} $\XX$ consists of a von Neumann algebra $\Linf(\XX)$ and an injective normal unital $*$-homomorphism $\delta_\XX\colon\Linf(\XX)\to\Linf(\GG)\vtens\Linf(\XX)$ such that
\[
(\id\tens\delta_\XX)\comp\delta_\XX=(\Delta_{\GG}\tens\id)\comp\delta_\XX.
\]
A \wst-quantum $\GG$-space $\XX$ is \emph{ergodic} if $\delta_\XX(x)=\I\tens{x}$ implies $x\in\CC\I$.
\end{definition}

\begin{definition}
Let $\GG$ be a locally compact quantum group. A \emph{\cst-quantum $\GG$-space} $\YY$ consists of a \cst-algebra $\C_0(\YY)$ and an injective
\[
\delta_\YY\in\Mor\bigl(\C_0(\YY),\C_0(\GG)\tens\C_0(\YY)\bigr)
\]
such that
\[
(\id\tens\delta_\YY)\comp\delta_\YY=(\Delta_{\GG}\tens\id)\comp\delta_\YY.
\]
and the Podle\'s condition holds:
\[
\bigl[\delta_\YY\bigl(\C_0(\YY)\bigr)\bigl(\C_0(\GG)\tens\I\bigr)\bigr]=\C_0(\GG)\tens\C_0(\YY).
\]
\end{definition}

\section{Embeddable \wst-quantum $\GG$-spaces}\label{wstSect}

\begin{proposition}\label{Wst-emb}
Let $\GG$ be a locally compact quantum group and let $\XX$ be an ergodic \wst-quantum $\GG$-space. Assume that there exists a normal unital $*$-homomorphism $\gamma\colon\Linf(\XX)\to\Linf(\GG)$ such that
\begin{equation}\label{intw}
(\id\tens\gamma)\comp\delta_\XX=\Delta_{\GG}\comp\gamma.
\end{equation}
Then $\gamma$ is injective.
\end{proposition}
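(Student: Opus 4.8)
The plan is to show that the kernel of $\gamma$ is forced to be trivial by combining the intertwining relation \eqref{intw} with ergodicity. First I would record the standard structure of the kernel. Since $\gamma$ is a normal $*$-homomorphism of von Neumann algebras, $\ker\gamma$ is a weak${}^*$-closed two-sided ideal, hence of the form $(\I-z)\Linf(\XX)$ for a central projection $z\in\Linf(\XX)$ (the central support of $\gamma$); equivalently $\gamma$ is faithful on $z\Linf(\XX)$ and $\gamma(x)=\gamma(zx)$ for every $x$. As $\gamma$ is unital, $\gamma(z)=\gamma(\I)=\I\neq0$, so $z\neq0$. Thus injectivity amounts to proving $z=\I$, i.e.\ $p:=\I-z=0$.

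Next I would feed $p$ into \eqref{intw}. Since $\gamma(p)=0$, applying $\id\tens\gamma$ to $\delta_\XX(p)$ gives $(\id\tens\gamma)\comp\delta_\XX(p)=\Delta_{\GG}\comp\gamma(p)=0$. The homomorphism $\id\tens\gamma$ has central support $\I\tens z$, so this reads $(\I\tens z)\delta_\XX(p)=0$; because $\delta_\XX(p)$ is a projection this is exactly $\delta_\XX(p)\le\I\tens p$, equivalently $\delta_\XX(z)\ge\I\tens z$. In other words $z$ is \emph{sub-invariant} for the action. This is the quantum counterpart of the classical statement that the essential image of an equivariant map into $\XX$ is an a.e.\ sub-invariant set.

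The heart of the argument is to upgrade sub-invariance to genuine invariance, i.e.\ $\delta_\XX(z)=\I\tens z$. Classically one passes from $gA\subseteq A$ to $A\subseteq gA$ using the group inverse; the quantum substitute is the unitary antipode $R$ of $\GG$. I would realise the action through its canonical unitary implementation, a corepresentation $\mathcal U\in\Linf(\GG)\vtens\B(\Ltwo(\XX))$ with $\delta_\XX(x)=\mathcal U(\I\tens x)\mathcal U^*$ and $(\Delta_{\GG}\tens\id)\mathcal U=\mathcal U_{13}\mathcal U_{23}$: sub-invariance reads $\mathcal U(\I\tens z)\mathcal U^*\ge\I\tens z$, and applying $R$ in the first leg of the corepresentation identity converts this one-sided estimate into the opposite inequality, forcing equality. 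Equivalently, if a faithful $\delta_\XX$-invariant normal semifinite weight is available, its invariance promotes $(\omega\tens\id)\delta_\XX(z)\ge z$ to an equality for every normal state $\omega$, whence $\delta_\XX(z)=\I\tens z$. I expect this step to be the main obstacle: the purely algebraic data---coassociativity together with injectivity of $\delta_\XX$---merely reproduce the semigroup identity $P_{\omega\star\omega'}=P_{\omega'}\comp P_\omega$ for the maps $P_\omega=(\omega\tens\id)\comp\delta_\XX$ and cannot detect the corner $(\I-z)\Linf(\XX)$ on which $\gamma$ vanishes, so some dynamical input (the antipode, or an invariant weight) is indispensable.

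With $\delta_\XX(z)=\I\tens z$ in hand, ergodicity applies verbatim to the projection $z$ and yields $z\in\CC\I$. Since $z\neq0$ this forces $z=\I$, whence $\ker\gamma=(\I-z)\Linf(\XX)=\{0\}$ and $\gamma$ is injective.
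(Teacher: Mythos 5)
Your proposal follows the paper's proof step for step: identify $\ker\gamma$ as $p\Linf(\XX)$ for a central projection $p$ (your $p=\I-z$), use \eqref{intw} to derive the sub-invariance $\delta_\XX(p)\le\I\tens p$, upgrade this to genuine invariance $\delta_\XX(p)=\I\tens p$, and conclude by ergodicity plus unitality of $\gamma$. The first, second and fourth steps are carried out correctly; in particular your observation that $\ker(\id\tens\gamma)$ is the ideal cut out by the central projection $\I\tens p$ is a clean way to obtain the inequality, and it matches the paper's statement that $\delta_\XX(\sJ)\subset\Linf(\GG)\vtens\sJ$ forces $\delta_\XX(p)\le\I\tens p$.

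The gap is exactly where you said it is: the promotion of sub-invariance to invariance. The paper does not prove this step inline either --- it invokes the proof of \cite[Theorem 4.2]{certain}, where precisely this statement (a central projection supporting an invariant weak${}^*$-closed ideal satisfies $\delta_\XX(p)=\I\tens p$, not merely $\le$) is established. Your two sketched substitutes do not close the gap as written. The antipode route is not a manipulation one can actually perform: the interaction of the unitary antipode with corepresentations goes through the full antipode $S=R\comp\tau_{-i/2}$, which is unbounded, and $(R\tens\id)\comp\delta_\XX$ is not an action, so there is no direct sense in which ``applying $R$ in the first leg'' of $\mathcal{U}(\I\tens z)\mathcal{U}^*\ge\I\tens z$ produces the reversed estimate; note also that conjugating the given inequality by $\mathcal{U}^*$ yields the reversed inequality for the ``inverse'' conjugation for free and carries no new information --- classically the argument secretly uses that inversion preserves the Haar measure class, and supplying the quantum counterpart of that fact is essentially the content of the cited theorem. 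The invariant-weight route presupposes a faithful normal semifinite $\delta_\XX$-invariant weight on $\Linf(\XX)$, which is not among the hypotheses and is not automatic for ergodic actions of a non-compact $\GG$, so it cannot be used without first proving existence. In short: your architecture and all routine steps coincide with the paper's, but the crux requires either the citation the paper relies on or a genuine argument that the proposal does not supply.
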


\begin{proof}
Let $\sJ\subset\sN$ be the kernel of $\gamma$ and $p\in\cZ(N)$ be the central projection corresponding to the two-sided ideal $\sJ$: $\sJ=p\sN$. Equation \eqref{intw} implies that $\delta_\XX(\sJ)\subset\Linf(\GG)\vtens\sJ$ which, in turn, leads to the inequality $\delta_\XX(p)\leq\I\tens{p}$. It has been shown in the proof of \cite[Theorem 4.2]{certain} that in fact we have $\delta_\XX(p)=\I\tens{p}$. Using the ergodicity of $\XX$ we conclude that either $p=0$ or $p=\I$. The second case is ruled out, since $\gamma$ is a unital map. Hence $p=0$ and $\ker\gamma=\sJ=\{0\}$.
\end{proof}

Proposition \ref{Wst-emb} says that if we are given a \wst-quantum $\GG$-space $\XX$ with an equivariant map $\gamma\colon\Linf(\XX)\to\Linf(\GG)$ then we may regard $\Linf(\XX)$ as a right coideal in $\Linf(\GG)$. Therefore, in such a situation, we will from now on assume that the embedding is part of the data.

\begin{definition}\label{EWstGs}
Let $\GG$ be a locally compact quantum group and $\XX$ an ergodic \wst-quantum $\GG$-space. We say that $\XX$ is an \emph{embeddable \wst-quantum $\GG$-space} if $\Linf(\XX)\subset\Linf(\GG)$ and $\delta_\XX$ is given by restriction of $\Delta_{\GG}$ to $\Linf(\XX)$. In particular we have $\Delta_{\GG}\bigl(\Linf(\XX)\bigr)\subset\Linf(\GG)\vtens\Linf(\XX)$.
\end{definition}

\begin{example}\label{wstEx1}
One obvious example of an embeddable \wst-quantum $\GG$-space is $\XX=\GG$. More generally, let $\HH$ be a closed quantum subgroup of $\GG$ in the sense Vaes (\cite{Vaes,subgroups}, cf.~also \cite{VV}). Performing the quotient construction one gets an embeddable \wst-quantum $\GG$-space $\GG/\HH$, see \cite[Definition 4.1]{Vaes}. Examples related to classical groups will be discussed in more detail in Theorem \ref{classX}.
\end{example}

\begin{definition}\label{defqtw} Let $\XX$ be an embeddable \wst-quantum $\GG$-space. We say that $\XX$ is of quotient type if there exists a closed quantum subgroup in the sense of Vaes $\HH$ of $\GG$ such that $\Linf(\XX)=\Linf(\GG/\HH)$.
\end{definition}

\begin{proposition}\label{propDuYY}
Let $\XX$ be an embeddable \wst-quantum $\GG$-space and let $\sN$ be the relative commutant of $\Linf(\XX)$ in $\Linf(\hh{\GG})$:
\[
\sN=\bigl\{y\in\Linf(\hh{\GG})\st\;\forall\:x\in\Linf(\XX)\,\:xy=yx\bigr\}.
\]
Then
\begin{enumerate}
\item\label{propDuYY1} $\Delta_{\hh{\GG}}(\sN)\subset\Linf(\hh{\GG})\vtens\sN$,
\item\label{propDuYY2} the pair $(\sN,\gamma)$ with $\gamma=\bigl.\Delta_{\hh{\GG}}\bigr|_\sN$ is an embeddable \wst-quantum $\hh{\GG}$-space.
\end{enumerate}
\end{proposition}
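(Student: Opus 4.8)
The plan is to prove the two assertions in turn, treating \eqref{propDuYY1} as the crux and deriving \eqref{propDuYY2} as a largely formal consequence. For \eqref{propDuYY1} note first that, since $\sN\subset\Linf(\hh{\GG})$ and $\Delta_{\hh{\GG}}$ is a comultiplication, $\Delta_{\hh{\GG}}(\sN)$ automatically lands in $\Linf(\hh{\GG})\vtens\Linf(\hh{\GG})$; the real content is that the \emph{second} leg can be squeezed into $\sN$. I would reduce this to a commutation statement by means of the slice map property of the von Neumann tensor product: an element $z\in\Linf(\hh{\GG})\vtens\Linf(\hh{\GG})$ lies in $\Linf(\hh{\GG})\vtens\sN$ as soon as $(\omega\tens\id)(z)\in\sN$ for every $\omega\in\Linf(\hh{\GG})_*$. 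Slicing the would-be relation $[\Delta_{\hh{\GG}}(y),\I\tens x]=0$ in the first leg yields $[(\omega\tens\id)\Delta_{\hh{\GG}}(y),x]=0$, and since $(\omega\tens\id)\Delta_{\hh{\GG}}(y)\in\Linf(\hh{\GG})$, this places each such slice in $\sN$. Thus it suffices to prove
\[
\Delta_{\hh{\GG}}(y)(\I\tens x)=(\I\tens x)\Delta_{\hh{\GG}}(y)\qquad(x\in\Linf(\XX),\ y\in\sN).
\]

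The key computation is to convert this commutation, which involves $\Delta_{\hh{\GG}}$, into the companion one for $\Delta_{\GG}$. From the defining formula for $W$ one derives $\Delta_{\GG}(x)W=W(x\tens\I)$, that is $\Delta_{\GG}(x)=W(x\tens\I)W^*$, and the analogous identity $\Delta_{\hh{\GG}}(y)=\hh{W}(y\tens\I)\hh{W}^*$ for the dual; combined with $\hh{W}=\Sigma W^*\Sigma$ this gives $\Delta_{\hh{\GG}}(y)=\Sigma W^*(\I\tens y)W\Sigma$. Inserting this, commuting the flip past $\I\tens x$ via $\Sigma(\I\tens x)=(x\tens\I)\Sigma$, and substituting $W(x\tens\I)=\Delta_{\GG}(x)W$ together with $(x\tens\I)W^*=W^*\Delta_{\GG}(x)$, the displayed identity becomes, after cancelling the invertible factors $\Sigma W^*$ on the left and $W\Sigma$ on the right,
\[
(\I\tens y)\Delta_{\GG}(x)=\Delta_{\GG}(x)(\I\tens y).
\]
This last relation is immediate from the hypotheses: embeddability of $\XX$ gives $\Delta_{\GG}(x)\in\Linf(\GG)\vtens\Linf(\XX)$, while $y\in\sN$ commutes with every element of $\Linf(\XX)$ by definition; hence $\I\tens y$ commutes with every elementary tensor $a\tens b$ with $a\in\Linf(\GG)$, $b\in\Linf(\XX)$, and so, by normality, with all of $\Linf(\GG)\vtens\Linf(\XX)$. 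This proves \eqref{propDuYY1}.

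For \eqref{propDuYY2} it remains to check that $(\sN,\gamma)$ with $\gamma=\bigl.\Delta_{\hh{\GG}}\bigr|_{\sN}$ meets Definition \ref{EWstGs}. Since $\sN$ is a von Neumann subalgebra of $\Linf(\hh{\GG})$ containing $\I$, the map $\gamma$ is a normal unital $*$-homomorphism, and it is injective because $\Delta_{\hh{\GG}}(y)=\hh{W}(y\tens\I)\hh{W}^*$ manifestly is. By \eqref{propDuYY1} its range lies in $\Linf(\hh{\GG})\vtens\sN$, and coassociativity of $\Delta_{\hh{\GG}}$ restricts to the coaction identity $(\id\tens\gamma)\comp\gamma=(\Delta_{\hh{\GG}}\tens\id)\comp\gamma$ on $\sN$ (the left-hand side being well defined precisely because of \eqref{propDuYY1}). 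Finally, ergodicity follows from that of $\hh{\GG}$ regarded as a \wst-quantum $\hh{\GG}$-space under $\Delta_{\hh{\GG}}$ (cf.\ Example \ref{wstEx1}): if $y\in\sN$ satisfies $\Delta_{\hh{\GG}}(y)=\I\tens y$ then $y\in\CC\I$. Hence $(\sN,\gamma)$ is an embeddable \wst-quantum $\hh{\GG}$-space.

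I expect the main obstacle to be the computation of the middle paragraph: everything hinges on correctly juggling the Kac--Takesaki operators $W$ and $\hh{W}=\Sigma W^*\Sigma$ so as to trade the commutation relation for $\Delta_{\hh{\GG}}$ against the one for $\Delta_{\GG}$. Once this symmetric duality between $[\Delta_{\hh{\GG}}(y),\I\tens x]=0$ and $[\Delta_{\GG}(x),\I\tens y]=0$ is established, both parts of the proposition fall out quickly.
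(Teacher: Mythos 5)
Your proposal is correct and follows essentially the same route as the paper's proof: in both cases the heart of the matter is trading the relation $[\Delta_{\hh{\GG}}(y),\I\tens x]=0$ for $[\Delta_{\GG}(x),\I\tens y]=0$ by means of $\hh{W}=\Sigma W^*\Sigma$ and the implementation of the comultiplications by the Kac--Takesaki operator, combined with the observation that $\I\tens y$ commutes with $\Linf(\GG)\vtens\Linf(\XX)\ni\Delta_{\GG}(x)$. Your extra details --- the slice-map characterization of $\Linf(\hh{\GG})\vtens\sN$ and the explicit check of injectivity, coassociativity and ergodicity in part (2) --- simply spell out what the paper dismisses as ``clear''.
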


\begin{proof}
Clearly \eqref{propDuYY2} follows from \eqref{propDuYY1}. Now take $y\in\sN$ and $x\in\Linf(\XX)$. Let $W\in\B\bigl(\Ltwo(\GG)\tens\Ltwo(\GG)\bigr)$ be the Kac-Takesaki operator. We have
\[
\begin{split}
\Delta_{\hh{\GG}}(y)(\I\tens{x})&=\hh{W}(y\tens\I)\hh{W}^*(\I\tens{x})\\
&=\Sigma{W^*}\Sigma(y\tens\I)\Sigma{W}\Sigma(\I\tens{x})\Sigma{W^*W}\Sigma\\
&=\Sigma{W^*}(\I\tens{y}){W}(x\tens\I){W^*W}\Sigma\\
&=\Sigma{W^*}(I\tens{y})\Delta_{\GG}(x)W\Sigma\\
&=\Sigma{W^*}\Delta_{\GG}(x)(\I\tens{y})W\Sigma\\
&=(\I\tens{x})\Sigma{W^*}(\I\tens{y})W\Sigma\\
&=(\I\tens{x})\Delta_{\hh{\GG}}(y).
\end{split}
\]
This shows that $\Delta_{\hh{\GG}}(y)\in\Linf(\hh{\GG})\vtens\sN$.
\end{proof}

\begin{definition}
Let $\XX$ be an embeddable \wst-quantum $\GG$-space. The embeddable \wst-quantum $\GG$-space $(\sN,\gamma)$ defined in Proposition \ref{propDuYY} will be called the \emph{co-dual} of $\XX$.
\end{definition}

From now on the co-dual embeddable \wst-quantum $\GG$-space of $\XX$ will be denoted by $\dd{\XX}$. In particular we have
\begin{equation}\label{corem}
\Linf(\dd{\XX})=\bigl\{y\in\Linf(\hh{\GG})\st\;\forall\:x\in\Linf(\XX)\,\:xy=yx\bigr\}.
\end{equation}
Let us also mention that a trace of the duality for embeddable \wst-quantum $\GG$-spaces can be found in \cite[Section 4]{FSalg}.

\begin{example}
Consider a locally compact quantum group $\GG$ as an embeddable \wst-quantum $\GG$-space (cf.~Example \ref{wstEx1}). Then the co-dual $\dd{\GG}$ is a one point set which can be naturally identified with the (classical) trivial subgroup of $\hh{\GG}$ (cf.~Theorem \ref{classX}).
\end{example}

\begin{remark}\label{cpRem}
Let $\XX$ be an embeddable \wst-quantum $\GG$-space and define $\sM$ to be the von Neumann algebra
\begin{equation}\label{crossprd}
\sM=(\Linf(\XX)\cup\Linf(\hh{\GG})'\bigr)''\subset\B\bigl(\Ltwo(\GG)\bigr).
\end{equation}
This is the \emph{crossed product} von Neumann algebra as defined \cite[Definition 2.1]{unitImpl}. Usually in this context the crossed product is defined as a subalgebra of $\B\bigl(\Ltwo(\GG)\bigr)\vtens\Linf(\XX)$ generated by the image of $\delta_\XX$ and the copy $\Linf(\hh{\GG})'\tens\I$ of $\Linf(\hh{\GG})'$, but since $\delta_\XX$ is the restriction of $\Delta_{\GG}$ we can use the operator $W$ to bring this subalgebra back to $\B\bigl(\Ltwo(\GG)\bigr)$. More precisely we note that $W^*(y\tens\I)W=y\tens\I$ for any $y\in\Linf(\hh{\GG})'$ and $W^*\delta_\XX(x)W=x\tens\I$ for any $x\in\Linf(\XX)$. Therefore we can identify $\sM$ with the crossed product (cf.~remarks after \cite[Definition 4.1]{unitImpl}). Note that since we are using the right Haar measure, $\Linf(\hh{\GG})$ is the quantum counterpart of the algebra generated by the \emph{right shifts} on $\GG$. Following this analogy we see that the commutant $\Linf(\hh{\GG})'$ is the ``algebra generated by the left shifts'' and its appearance in the crossed product \eqref{crossprd} is compatible with $\XX$ being a left quantum $\GG$-space.

Now we easily see that by definition $\Linf(\dd{\XX})=\Linf(\hh{\GG})\cap\Linf(\XX)'=\sM'$. Moreover we clearly have
\begin{equation}\label{ddX}
\Linf(\dd{\dd{\XX}})=\Linf(\GG)\cap\Linf(\dd{\XX})'=\Linf(\GG)\cap\sM.
\end{equation}
\end{remark}

\begin{theorem}\label{ddddT}
Let $\XX$ be an embeddable \wst-quantum $\GG$-space. Then $\dd{\dd{\XX}}=\XX$.
\end{theorem}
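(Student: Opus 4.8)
The plan is to exploit the reformulation of co-duality in terms of commutants established in Remark~\ref{cpRem}. By equation~\eqref{ddX} we have
\[
\Linf(\dd{\dd{\XX}})=\Linf(\GG)\cap\sM,
\]
where $\sM=\bigl(\Linf(\XX)\cup\Linf(\hh{\GG})'\bigr)''$ is the crossed product. Since trivially $\Linf(\XX)\subset\Linf(\GG)\cap\sM$ (indeed $\Linf(\XX)\subset\Linf(\GG)$ by definition and $\Linf(\XX)\subset\sM$ by construction), the inclusion $\Linf(\XX)\subset\Linf(\dd{\dd{\XX}})$ is immediate. The entire content of the theorem is therefore the reverse inclusion $\Linf(\GG)\cap\sM\subset\Linf(\XX)$, and everything hinges on computing the intersection of the crossed product $\sM$ with $\Linf(\GG)$.

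First I would bring the problem into the standard form for crossed products by von Neumann algebraic duality. As noted in Remark~\ref{cpRem}, conjugation by the Kac-Takesaki operator $W$ identifies $\sM$ with the crossed product realized inside $\B\bigl(\Ltwo(\GG)\bigr)\vtens\Linf(\XX)$, namely the algebra generated by $\delta_\XX\bigl(\Linf(\XX)\bigr)$ together with $\Linf(\hh{\GG})'\tens\I$. In that picture $\Linf(\GG)\subset\B\bigl(\Ltwo(\GG)\bigr)$ corresponds, after the same $W$-conjugation, to its image $W^*\bigl(\Linf(\GG)\tens\I\bigr)W$, and I would track exactly which operators in the crossed product land back in $\Linf(\GG)\tens\I$. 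The natural tool here is the dual coaction (the canonical $\hh{\GG}$-action on the crossed product) whose fixed point algebra is precisely $\delta_\XX\bigl(\Linf(\XX)\bigr)\cong\Linf(\XX)$; I would aim to show that any element of $\Linf(\GG)\cap\sM$ is automatically fixed by this dual coaction, forcing it to lie in $\Linf(\XX)$.

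A cleaner route, which I would prefer, is to avoid the dual coaction and instead compute commutants directly, since we already possess $\Linf(\dd{\XX})=\sM'=\Linf(\hh{\GG})\cap\Linf(\XX)'$ from Remark~\ref{cpRem}. Then by von Neumann's bicommutant theorem $\sM=\Linf(\dd{\XX})'$, so
\[
\Linf(\dd{\dd{\XX}})=\Linf(\GG)\cap\Linf(\dd{\XX})',
\]
which is literally the definition of the co-dual of $\dd{\XX}$ applied on the side of $\hh{\hh{\GG}}=\GG$. The key structural input I would invoke is that $\Linf(\dd{\XX})$, being itself an embeddable \wst-quantum $\hh{\GG}$-space by Proposition~\ref{propDuYY}, generates together with $\Linf(\GG)'=\Linf(\hh{\GG})$ a crossed product whose commutant is $\Linf(\GG)\cap\Linf(\dd{\XX})'$; the whole argument is then symmetric under $\GG\leftrightarrow\hh{\GG}$ and $\XX\leftrightarrow\dd{\XX}$.

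The main obstacle I anticipate is establishing the nontrivial commutant equality that makes the double relative commutant collapse back onto $\Linf(\XX)$ rather than something larger. Concretely, one must verify that $\Linf(\XX)$ is a \emph{full} relative commutant inside $\Linf(\GG)$ of its own commutant in $\Linf(\hh{\GG})$ --- equivalently, that no element of $\Linf(\GG)$ commuting with all of $\Linf(\dd{\XX})$ can escape $\Linf(\XX)$. This is exactly the point where the commutation theorem for the crossed product is indispensable: it guarantees that $\Linf(\XX)$ equals the double commutant $\bigl(\Linf(\hh{\GG})\cap\Linf(\XX)'\bigr)'\cap\Linf(\GG)$. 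I expect the heart of the proof to reduce to citing or reproving this commutation theorem (in the spirit of \cite{unitImpl}) for the crossed product $\sM$, after which both inclusions close and the genuine equality $\dd{\dd{\XX}}=\XX$, not merely an isomorphism, follows because all algebras live concretely on the common Hilbert space $\Ltwo(\GG)$.
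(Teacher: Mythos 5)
Your reduction via Remark~\ref{cpRem} is exactly right: the easy inclusion $\Linf(\XX)\subset\Linf(\GG)\cap\sM$ is immediate, and the whole theorem is the reverse inclusion. Your first route (the dual coaction on the crossed product, whose fixed-point algebra is $\Linf(\XX)$ by the biduality theorem of \cite{unitImpl}) is precisely the strategy the paper follows. But you never supply the one genuinely non-trivial step: \emph{why} is an element of $\Linf(\GG)\cap\sM$ automatically fixed by the dual coaction? You state this as an aim and treat it as routine, yet it is the heart of the matter. The paper proves it by realizing the dual action as $\alpha(x)=U(\I\tens{x})U^*$ for the explicit unitary $U=(\I\tens\hh{J}J)W^*(\I\tens{J}\hh{J})$, and then invoking two facts: $U\in\Linf(\hh{\GG})\vtens\Linf(\GG)'$, and the slices of the \emph{first} leg of $U$ generate $\Linf(\GG)'$ (by \cite[Proposition 2.15]{KVvN}). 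Only the generation property makes $\alpha$-invariance of $x\in\sM$ \emph{equivalent} to $x$ commuting with $\Linf(\GG)'$, i.e.\ to $x\in\Linf(\GG)$; combined with $\sM^\alpha=\Linf(\XX)$ from \cite[Theorem 2.7]{unitImpl} this closes the proof. Without this mechanism the implication you need has no argument behind it.

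Your preferred ``cleaner route'' does not repair this --- it is circular. After the trivial bicommutant manipulations, the ``commutation theorem'' you propose to cite or reprove, namely $\bigl(\Linf(\hh{\GG})\cap\Linf(\XX)'\bigr)'\cap\Linf(\GG)=\Linf(\XX)$, is verbatim the statement of Theorem~\ref{ddddT}; there is no such result in \cite{unitImpl} to quote independently of the fixed-point theorem, which sends you back to the first route and its missing bridge. Moreover, the identity $\Linf(\GG)'=\Linf(\hh{\GG})$ you use is false: $\Linf(\GG)'$ is the von Neumann algebra of the commutant quantum group $\GG'$ (cf.\ \cite[Section 4]{KVvN}), and in fact $\Linf(\hh{\GG})\cap\Linf(\GG)'=\CC\I$ --- this very fact is used in the proof of Proposition~\ref{GGprop}. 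What you actually need is that the crossed product attached to $\dd{\XX}$, viewed as an $\hh{\GG}$-space, is generated by $\Linf(\dd{\XX})$ together with $\Linf(\hh{\hh{\GG}})'=\Linf(\GG)'$ (not $\Linf(\hh{\GG})$); with that correction your commutant formula is consistent, but the substantive gap described above remains.
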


\begin{proof}
Let $J$ and $\hh{J}$ be modular conjugations for the right Haar measures of $\GG$ and $\hh{\GG}$ respectively. Also let $\sM$ be the crossed product algebra as described in Remark \ref{cpRem}. Our aim is to show that
\begin{equation}\label{aim}
\Linf(\GG)\cap\sM=\Linf(\XX).
\end{equation}
(cf.~\eqref{ddX}).

Consider the unitary operator
\[
U=(\I\tens\hh{J}J)W^*(\I\tens{J}\hh{J}),
\]
where $W\in\Linf(\hh{\GG})\vtens\Linf(\GG)$ is the Kac-Takesaki operator. Note that we have
\[
U\in\Linf(\hh{\GG})\vtens\Linf(\GG)'
\]
and the slices of the first leg of $U$ generate $\Linf(\GG)'$ (cf.~\cite[Proposition 2.15]{KVvN}). Therefore equality \eqref{ddX} implies that
\begin{equation}\label{equivinv}
x\in\Linf(\dd{\dd{\XX}})\quad\Longleftrightarrow\quad{x}\in\sM\text{ and }U(\I\tens x)U^*=\I\tens{x}.
\end{equation}
Consider the mapping $\alpha\colon{x}\mapsto{U}(\I\tens{x})U^*$ defined on $\sM$. We have $\alpha\colon\sM\to\Linf(\hh{\GG})\vtens\sM$. Indeed, clearly
\begin{equation}\label{xinMalph}
\alpha(x)=\I\tens{x}\in\Linf(\hh{\GG})\vtens\sM
\end{equation}
for $x\in\Linf(\XX)\subset\sM$, while for $y\in\Linf(\hh{\GG})$ we have
\[
\begin{split}
U(\I\tens\hh{J}y\hh{J})U^*
&=(\I\tens\hh{J}J)W^*(\I\tens{JyJ})W(\I\tens{J}\hh{J})\\
&=(\I\tens\hh{J}J)\Sigma\Delta_{\hh{\GG}}\bigl(\hh{R}(y^*)\bigr)\Sigma(\I\tens{J}\hh{J})\\
&=(\I\tens\hh{J}J)(\hh{R}\tens\hh{R})\Delta_{\hh{\GG}}(y^*)(\I\tens{J}\hh{J})\\
&=(J\tens\hh{J})\Delta_{\hh{\GG}}(y)(J\tens\hh{J}),
\end{split}
\]
so that for $z\in\Linf(\hh{\GG})'$ we also have $\alpha(z)\in\Linf(\hh{\GG})\vtens\Linf(\hh{\GG})'\subset\Linf(\hh{\GG})\vtens\sM$ (cf.~again \cite[Proposition 2.15]{KVvN} or \cite[Section 5]{mnw}). In fact $\alpha$ is the \emph{dual action} of $\hh{\GG}$ on the crossed product $\sM$ as defined in \cite[Section 2]{unitImpl} (cf.~also \cite[Proposition 2.2]{unitImpl}). By \cite[Theorem 2.7]{unitImpl} the \emph{fixed point algebra} $\sM^\alpha$ (\cite[Definition 1.2]{unitImpl}) is equal to the canonical copy of $\Linf(\XX)$ inside $\sM$. Since \eqref{equivinv} may be rephrased as $x\in\dd{\dd{\XX}}\Leftrightarrow{x}\in\sM^\alpha$, we see that $\Linf(\dd{\dd{\XX}})=\sM^\alpha=\Linf(\XX)$.
\end{proof}

Classical duality results in non-commutative harmonic analysis (\cite{TT}) together with the bi-co-duality result of Theorem \ref{ddddT} give us the following description of embeddable \wst-quantum $\GG$-spaces for a \emph{classical} $\GG$.

\begin{theorem}\label{classX}
Let $G$ be a locally compact group and $\GG$ the associated locally compact quantum group (with commutative $\Linf(\GG)=\Linf(G)$). Let $\XX$ be an embeddable \wst-quantum $\GG$-space. Then there exists a closed subgroup $H\subset{G}$ such that
\begin{enumerate}
\item\label{classX1} $\Linf(\XX)=\Linf(G/H)$,
\item\label{classX2} $\Linf(\dd{\XX})=\Linf(\hh{\HH})\subset\Linf(\hh{\GG})$,
\end{enumerate}
where $\HH$ is the locally compact quantum group associated to $H$ and $\hh{\HH}$ its dual.
\end{theorem}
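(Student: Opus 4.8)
The plan is to prove assertion \eqref{classX2} first, on the co-commutative dual side where the classical Takesaki--Tatsuuma theorem applies directly, and then to deduce \eqref{classX1} from it by bi-co-duality together with an elementary relative-commutant computation. To set up, note that since $\GG$ is the quantum group of a classical group $G$, the algebra $\Linf(\GG)=\Linf(G)$ is commutative and its dual $\Linf(\hh\GG)$ is the group von Neumann algebra $\vN(G)$ generated by the right shifts $\rho_g$ on $\Ltwo(G)$ (cf.\ Remark \ref{cpRem}), with co-commutative comultiplication determined by $\Delta_{\hh\GG}(\rho_g)=\rho_g\tens\rho_g$. By Proposition \ref{propDuYY} the co-dual $\dd\XX$ is an embeddable \wst-quantum $\hh\GG$-space; in particular $\Linf(\dd\XX)$ is a von Neumann subalgebra of $\Linf(\hh\GG)=\vN(G)$ satisfying the coideal condition $\Delta_{\hh\GG}\bigl(\Linf(\dd\XX)\bigr)\subset\Linf(\hh\GG)\vtens\Linf(\dd\XX)$.

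The key step is to invoke the Takesaki--Tatsuuma duality \cite{TT} on this co-commutative side. Its content is precisely that the coideals of $\vN(G)$ are exactly the algebras $\vN(H)$ attached to closed subgroups $H\subset G$, the correspondence $H\mapsto\vN(H)$ being a bijection. Applied to the coideal $\Linf(\dd\XX)$ it yields a closed subgroup $H\subset G$ with $\Linf(\dd\XX)=\vN(H)=\Linf(\hh\HH)$, where $\HH$ is the quantum group of $H$ and $\Linf(\hh\HH)\subset\Linf(\hh\GG)$ is the canonical inclusion attached to the closed subgroup. This is exactly \eqref{classX2}.

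To obtain \eqref{classX1} I would then use bi-co-duality. By Theorem \ref{ddddT} we have $\dd{\dd\XX}=\XX$, so by \eqref{ddX} the algebra $\Linf(\XX)=\Linf(\dd{\dd\XX})=\Linf(\GG)\cap\Linf(\dd\XX)'$ is the relative commutant of $\Linf(\dd\XX)=\vN(H)$ inside $\Linf(\GG)=\Linf(G)$. A multiplication operator $M_f$ with $f\in\Linf(G)$ commutes with the right shift $\rho_h$ if and only if $f(xh)=f(x)$ for almost every $x$, that is, if and only if $f$ is constant on the right cosets $xH$; hence this relative commutant is the algebra of right $H$-invariant functions, namely $\Linf(G/H)$. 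This yields \eqref{classX1}.

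The genuinely analytic input, and hence the main obstacle, is the Takesaki--Tatsuuma step: everything else is formal, being the reduction to the dual side (Proposition \ref{propDuYY}), the return to $\Linf(\XX)$ (Theorem \ref{ddddT}), and an elementary commutation computation. For discrete $G$ the classification of coideals of $\vN(G)$ is the easy observation that a coideal spanned by group unitaries must correspond to a subgroup, whereas for general locally compact $G$ it requires the full strength of \cite{TT}. The only further points needing care are fixing conventions so that the quotient emerges as $G/H$ rather than $H\backslash G$ --- the paper's use of the right Haar weight makes $\Linf(\hh\GG)=\vN(G)$ the algebra of right shifts, which is what forces the right $H$-invariance above --- and checking that the abstract inclusion $\Linf(\hh\HH)\subset\Linf(\hh\GG)$ coming from the quantum-subgroup framework agrees with the concrete inclusion $\vN(H)\subset\vN(G)$.
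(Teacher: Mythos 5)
Your proposal is correct in substance, but it runs the paper's argument in the opposite direction, and this reversal changes which classification theorem has to be taken as the external input. The paper applies Takesaki--Tatsuuma on the commutative side, where it applies verbatim: $\Linf(\XX)$ is by hypothesis an invariant von Neumann subalgebra of $\Linf(G)$, so Theorems 2 and 5 of \cite{TT} give $\Linf(\XX)=\Linf(G/H)$ directly, i.e.\ statement \eqref{classX1} comes first; statement \eqref{classX2} is then deduced by observing that $\Linf(G/H)$ is exactly the relative commutant of $\vN(H)$ in $\Linf(G)$, so that $\XX=\dd{\YY}$ for $\YY$ defined by $\Linf(\YY)=\vN(H)$, and applying Theorem \ref{ddddT} to conclude $\dd{\XX}=\YY$. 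You do the mirror image: you first classify the coideal $\Linf(\dd{\XX})\subset\vN(G)$ as some $\vN(H)$, proving \eqref{classX2} first, and then recover \eqref{classX1} from Theorem \ref{ddddT} together with the same elementary commutant computation. The three ingredients (Takesaki--Tatsuuma, the bi-co-duality theorem, and the computation $\Linf(G)\cap\vN(H)'=\Linf(G/H)$) are identical in both arguments; only the order, and the side of the duality on which \cite{TT} is invoked, differ.

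The point that needs care is your key step. The statement you attribute to \cite{TT} --- that every coideal von Neumann subalgebra of $\vN(G)$ is of the form $\vN(H)$ for a closed subgroup $H\subset G$ --- is true, but observe that the paper's own introduction presents precisely this statement as a \emph{consequence} of the Takesaki--Tatsuuma correspondence for invariant subalgebras of $\Linf(G)$ combined with the co-duality of Section \ref{wstSect}; in other words, within the paper's framework it is a corollary of Theorem \ref{classX} rather than an input to it. So if you quote the dual-side classification as a black box, you must make sure your reference establishes it independently of the commutative-side statement, or the argument becomes circular. A second, smaller point: whatever form of invariance the cited classification of subalgebras of $\vN(G)$ assumes (for instance a sub-bialgebra condition $\Delta_{\hh{\GG}}(N)\subset N\vtens N$, possibly with antipode invariance), you only know from Proposition \ref{propDuYY} that $\Linf(\dd{\XX})$ is a coideal, $\Delta_{\hh{\GG}}\bigl(\Linf(\dd{\XX})\bigr)\subset\Linf(\hh{\GG})\vtens\Linf(\dd{\XX})$; bridging the two uses co-commutativity and a slice-map argument showing $\bigl(\vN(G)\vtens N\bigr)\cap\bigl(N\vtens\vN(G)\bigr)=N\vtens N$, a step you elided. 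The paper's ordering avoids both issues, since $\Linf(\XX)$ is handed to you exactly in the form the commutative-side theorem of \cite{TT} requires; what your ordering buys is that the subgroup $H$ appears immediately on the dual side, where the quantum-subgroup inclusion $\Linf(\hh{\HH})\subset\Linf(\hh{\GG})$ is concretely $\vN(H)\subset\vN(G)$, so \eqref{classX2} requires no further identification.
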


Note that we identified $\Linf(\hh{\HH})$ with its image in $\Linf(\hh{\GG})$.

\begin{proof}[Proof of theorem \ref{classX}]
\eqref{classX1} is a direct consequence of Theorem 5 and Theorem 2 of \cite{TT} adapted to our situation (in particular Takesaki and Tatsuuma consider what we would call \emph{right actions}). Moreover we have
\[
\Linf(\XX)=\bigl\{f\in\Linf(G)\st{f}\text{ is constant on the right cosets of }H\bigr\}
\]
which is equal to the intersection of $\Linf(G)$ with the commutant of the subalgebra of $\B\bigl(\Ltwo(G)\bigr)$ generated by right shifts by elements of $H$. This is saying manifestly that if we define $\YY$ by
\[
\Linf(\YY)=\vN(H)
\]
(the algebra generated by right shifts, see \cite[Section 4]{subgroups}) then $\XX=\dd{\YY}$. By Theorem \ref{ddddT} $\dd{\XX}=\YY$ which is precisely \eqref{classX2}.
\end{proof}

The next lemma provides a characterization of embeddable \wst-quantum $\GG$-spaces of quotient type in terms of their co-duals.

\begin{lemma}\label{charQT}
Let $\XX$ be an embeddable \wst-quantum $\GG$-space. Then $\XX$ is of quotient type if there exists a closed quantum subgroup $\HH$ of $\GG$ in the sense of Vaes such that $\Linf(\dd\XX)$ is the image of $\Linf(\hh{\HH})$ in $\Linf(\hh{\GG})$.
\end{lemma}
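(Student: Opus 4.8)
The plan is to run the bi-co-duality of Theorem \ref{ddddT} backwards: the hypothesis describes $\Linf(\dd\XX)$ explicitly, and taking its relative commutant in $\Linf(\GG)$ recovers $\Linf(\XX)$, which I then want to recognise as the quotient coideal. The only input from the co-duality theory that I need is the identity obtained by combining Theorem \ref{ddddT} with formula \eqref{ddX}: for every embeddable \wst-quantum $\GG$-space $\XX$ one has
\[
\Linf(\XX)=\Linf(\dd{\dd\XX})=\Linf(\GG)\cap\Linf(\dd\XX)',
\]
so $\Linf(\XX)$ is simply the relative commutant of $\Linf(\dd\XX)$ in $\Linf(\GG)$. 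This is already available to us.

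Next I would feed in the assumption. By hypothesis there is a closed quantum subgroup $\HH$ of $\GG$ in the sense of Vaes such that $\Linf(\dd\XX)$ equals the image of $\Linf(\hh{\HH})$ in $\Linf(\hh{\GG})$. Substituting this into the displayed identity yields
\[
\Linf(\XX)=\Linf(\GG)\cap\Linf(\hh{\HH})',
\]
the relative commutant in $\Linf(\GG)$ of the dual subgroup algebra. Thus the statement will follow the moment this relative commutant is identified with the quotient coideal $\Linf(\GG/\HH)$ attached to the \emph{same} subgroup $\HH$ (Example \ref{wstEx1}), for then $\XX$ is of quotient type in the sense of Definition \ref{defqtw}.

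The crux, and the step I expect to be the main obstacle, is therefore the identity
\[
\Linf(\GG/\HH)=\Linf(\GG)\cap\Linf(\hh{\HH})'.
\]
This is the precise quantum counterpart of the classical fact exploited in the proof of Theorem \ref{classX}: the functions on $G$ that are constant on the right cosets of $H$ are exactly those commuting with the right shifts generating $\vN(H)$, which is the image of $\Linf(\hh{\HH})$. In the present generality it is ``one half'' of the Galois-type correspondence between coideals of $\Linf(\GG)$ and closed quantum subgroups, and it is built into the quotient construction of Vaes; I would extract it from \cite{Vaes} together with \cite{subgroups}. Were it not citable verbatim in this form, I would argue in two steps: the inclusion $\Linf(\GG/\HH)\subseteq\Linf(\GG)\cap\Linf(\hh{\HH})'$ expresses the commutation of the quotient coideal with the dual subgroup algebra and is the invariance property defining $\GG/\HH$, while the reverse (maximality) inclusion is the genuinely substantive part; one can package it by noting that the image of $\Linf(\hh{\HH})$ is itself an embeddable \wst-quantum $\hh{\GG}$-space (its embedding is the restriction of $\Delta_{\hh{\GG}}$, which intertwines the comultiplications by the definition of a Vaes subgroup) and then invoking Theorem \ref{ddddT} for that space to turn the commutation into the equality $\Linf(\dd{\GG/\HH})=\Linf(\hh{\HH})$, whence the claimed relative-commutant identity. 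Granting it, we obtain $\Linf(\XX)=\Linf(\GG/\HH)$ with the given $\HH$, which is exactly the assertion that $\XX$ is of quotient type.
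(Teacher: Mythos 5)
Your reduction is exactly the paper's: combining Theorem \ref{ddddT} with \eqref{ddX} to get $\Linf(\XX)=\Linf(\dd{\dd\XX})=\Linf(\GG)\cap\Linf(\dd\XX)'$, substituting the hypothesis, and thereby reducing the lemma to the single identity
\[
\Linf(\GG/\HH)=\Linf(\GG)\cap\gamma\bigl(\Linf(\hh\HH)\bigr)',
\]
where $\gamma\colon\Linf(\hh\HH)\to\Linf(\hh\GG)$ is the normal embedding. The paper runs the same chain (in the opposite direction) and, crucially, \emph{proves} this identity rather than citing it. In the framework used here, $\Linf(\GG/\HH)$ is defined as the fixed-point algebra $\Linf(\GG)^\alpha$ of the action $\alpha$ of $\HH$ on $\GG$ (cf.\ the proof of Theorem \ref{ccthm}), so the relative-commutant description is not available verbatim from \cite{Vaes}; it is precisely the content of this lemma's proof. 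The paper's argument is short: by \cite{subgroups}, $\alpha$ is conjugation by the bicharacter $(\gamma\tens\id)(W^\HH)$, so $\alpha(x)=x\tens\I$ holds if and only if $(\gamma\tens\id)(W^\HH)$ commutes with $x\tens\I$, which upon slicing the second leg --- the slices $\gamma\bigl((\id\tens\omega)(W^\HH)\bigr)$ generate $\gamma\bigl(\Linf(\hh\HH)\bigr)$ --- is equivalent to $x$ commuting with $\gamma\bigl(\Linf(\hh\HH)\bigr)$. Note that both inclusions fall out of this single equivalence; there is no asymmetry between an ``easy'' invariance half and a ``substantive'' maximality half once the bicharacter implementation of $\alpha$ is in hand.

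The genuine gap is in your fallback argument for the maximality direction, which is circular. Write $\Linf(\YY)=\gamma\bigl(\Linf(\hh\HH)\bigr)$. The commutation property gives $\Linf(\GG/\HH)\subseteq\Linf(\GG)\cap\Linf(\YY)'=\Linf(\dd\YY)$; taking commutants and applying Theorem \ref{ddddT} to $\YY$ then yields only the inclusion $\Linf(\YY)\subseteq\Linf(\hh\GG)\cap\Linf(\GG/\HH)'=\Linf(\dd{\GG/\HH})$, not the equality $\Linf(\dd{\GG/\HH})=\Linf(\YY)$ that you assert. That equality is, via Theorem \ref{ddddT} applied now to $\GG/\HH$, \emph{equivalent} to the identity you are trying to prove, so invoking it amounts to restating the goal on the other side of the duality. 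Theorem \ref{ddddT} cannot supply the missing ingredient: it holds for \emph{every} embeddable \wst-quantum $\GG$-space and is therefore incapable of identifying the co-dual of one particular coideal with another particular coideal. That identification requires the concrete input tying the Vaes quotient construction to commutants --- namely the bicharacter/slicing argument above (classically, the Takesaki--Tatsuuma theorem of \cite{TT}) --- and this is exactly the step your proposal leaves unproved.
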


\begin{proof}
Let $\HH$ be a closed quantum subgroup of $\GG$ in the sense of Vaes with corresponding injective $*$-homomorphism $\gamma\colon\Linf(\hh\HH)\to\Linf(\hh\GG)$ (see \cite[Theorem 3.3]{subgroups}). Let $W^\HH\in\Linf(\hh\HH)\vtens\Linf(\HH)$ be the Kac-Takesaki operator of $\HH$ and let $\alpha\colon\Linf(\GG)\to\Linf(\GG)\vtens\Linf(\HH)$ be the corresponding action of $\HH$ on $\GG$:
\[
\alpha(x)=\bigl((\gamma\otimes\id)(W^\HH)\bigr)(x\tens\I_{\Linf(\hh\HH)})\bigl((\gamma\otimes\id)(W^{\HH})^*\bigr).
\]
Since $x\in\Linf(\GG/\HH)$ if and only if $\alpha(x)=x\tens\I$ (i.e.~$(\gamma\otimes\id)(W^\HH)$ commutes with $x\tens\I_{\Linf(\hh\HH)}$) and slices of the second leg of $W^\HH$ generate $\Linf(\hh\HH)$, we see that $x\in\Linf(\GG/\HH)$ if $xy=yx$ for any $y\in\gamma\bigl(\Linf(\hh\HH)\bigr)$. By \eqref{corem} we conclude that $\GG/\HH$ is the co-dual of $\dd{\XX}$.
\end{proof}

The above lemma suggests considering embeddable \wst-quantum $\GG$-spaces $\XX$ such that co-dual $\dd\XX$ satisfies $\delta_{\dd{\XX}}\bigl(\Linf(\dd{\XX})\bigr)\subset\Linf(\dd{\XX})\vtens\Linf(\dd{\XX})$. In particular, it is important to characterize the situations when $\dd{\XX}$ actually corresponds to a closed quantum subgroup of $\GG$. A relevant tool might be provided by \cite[Proposition 10.5]{BV}.

\section{Quantum homogeneous spaces \& embeddable quantum homogeneous spaces}\label{qhsSect}

In this section we introduce the definitions of a quantum homogeneous space and an embeddable quantum homogeneous space. As mentioned in Section \ref{intro} they are objects with two complementary descriptions: a von Neumann algebraic one and a \cst-algebraic one. This idea is taken directly from the pioneering work of Vaes in \cite[Section 6]{Vaes}, where the author considers quantum homogeneous spaces of quotient type (for regular locally compact quantum groups). His results motivated a general definition given in \cite[Definition 3.1]{certain} which we recall below and then specialize to the embeddable case in the further part of this section.

\begin{definition}\label{DefQHS}
Let $\GG$ be a locally compact quantum group. A \emph{quantum homogeneous space} for $\GG$ is an ergodic \wst-quantum $\GG$-space $\XX$ such that there exists a \cst-quantum $\GG$-space $\YY$ with
\begin{enumerate}
\item $\C_0(\YY)$ is a strongly dense \cst-subalgebra of $\Linf(\XX)$,
\item $\delta_{\YY}$ is given by the restriction of $\delta_\XX$ to $\C_0(\YY)$,
\item $\delta_\XX\bigl(\Linf(\XX)\bigr)\subset\M\bigl(\cK(\Ltwo(\GG))\tens\C_0(\YY)\bigr)$ and the map
\[
\delta_\XX\colon\Linf(\XX)\longrightarrow
\M\bigl(\cK(\Ltwo(\GG))\tens\C_0(\YY)\bigr)
\]
is strict.
\end{enumerate}
\end{definition}

In the situation described in Definition \ref{DefQHS} we will identify the ``quantum spaces'' $\XX$ and $\YY$ and simply write $\Linf(\XX)$ and $\C_0(\XX)$ for the corresponding von Neumann algebra and \cst-algebra respectively. We will also say that $\C_0(\XX)$ together with $\delta_\XX\in\Mor\bigl(\C_0(\XX),\C_0(\GG)\tens\C_0(\XX)\bigr)$ describes the \cst-version of $\XX$. This is justified by the fact that the \cst-algebra $\C_0(\XX)$ is unique (see Theorem \ref{uniqC0X} below). Another point of view on the concept of quantum homogeneous space is that we have one ``quantum space'' $\XX$ with two structures: topological described by $\C_0(\XX)$ and measurable described by $\Linf(\XX)$.

\begin{definition}\label{DefEQHS}
Let $\GG$ be a locally compact quantum group and let $\XX$ be a quantum homogeneous space of $\GG$. We say that $\XX$ is embeddable if the \wst-version of $\XX$ is an embeddable \wst-quantum $\GG$-space.
\end{definition}

\begin{example}
\noindent\begin{enumerate}
\item Let $\GG$ be a locally compact quantum group. The quantum group itself is a \wst-quantum $\GG$-space. The fact that $\GG$ has its \cst-algebraic version (\cite[Proposition 1.7]{KVvN}) means that $\GG$ is a quantum homogeneous space as defined in Definition \ref{DefQHS}.
\item Let $\GG$ be a locally compact quantum group (\cite{BS,BSV}) and let $\HH$ be closed subgroup of $\GG$ in the sense of Vaes (\cite{Vaes,subgroups}). The construction of $\Linf(\GG/\HH)$ may be easily performed by taking the fixed point algebra for an action of $\HH$ on $\Linf(\GG)$. However the existence of $\C_0(\GG/\HH)$ is a non-trivial matter. It was proved in \cite[Theorem 6.1]{Vaes} under the regularity assumption on $\GG$. Actually Vaes notes that the regularity might not be necessary for the existence of the quantum homogeneous space $\GG/\HH$ as was for example shown for the case of $\HH$ compact in \cite{Psol} (cf.~Example \ref{ccEx}\eqref{ccEx2}).
\end{enumerate}
\end{example}

\begin{definition}\label{qtype}
Let $\GG$ be a locally compact quantum group and let $\XX$ be a quantum homogeneous space of $\GG$. We say that $\XX$ is of \emph{quotient type} if the \wst-version of $\XX$ is of quotient type in the sense of Definition \ref{defqtw}.

This terminology agrees with the original one introduced by Podle\'s \cite{Pod1,podPHD} although he used a slightly more restrictive notion of a quantum subgroup.
\end{definition}

\begin{theorem}[{\cite[Propositions 3.4 \& 3.5]{certain}}]\label{uniqC0X}
Let $\GG$ be a locally compact quantum group and let $\XX$ be a quantum homogeneous space for $\GG$. Then
\begin{enumerate}
\item the \cst-algebra $\C_0(\XX)$ is uniquely determined by the conditions of Definition \ref{DefQHS},
\item\label{uniqC0X2} if $\YY$ is another quantum homogeneous space for $\GG$ and $\pi\in\Mor\bigl(\C_0(\XX),\C_0(\YY)\bigr)$ is an equivariant isomorphism then $\pi$ extends to an equivariant isomorphism $\Linf(\XX)\to\Linf(\YY)$.
\end{enumerate}
\end{theorem}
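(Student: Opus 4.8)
\emph{Overall strategy.} The plan is to reduce both parts to a single principle: the \cst-version of a quantum homogeneous space is rigidly controlled by the strict normal coaction $\delta_\XX$ together with the Podle\'s condition. I would prove part~(2) first, as it is the cleaner statement, and then run the same circle of ideas for part~(1).

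\emph{Part (2).} Set $\Theta=(\id\tens\pi)\comp\delta_\XX\colon\Linf(\XX)\to\M\bigl(\cK(\Ltwo(\GG))\tens\C_0(\YY)\bigr)$. Since $\pi$ is a \cst-isomorphism, $\id\tens\pi$ is a strictly continuous isomorphism of the corresponding multiplier algebras, so $\Theta$ is strict; equivariance of $\pi$ gives $\Theta|_{\C_0(\XX)}=\delta_\YY\comp\pi$, hence $\Theta(\C_0(\XX))=\delta_\YY(\C_0(\YY))$. Given $x\in\Linf(\XX)$, use Kaplansky density to pick a net $a_i\in\C_0(\XX)$, bounded by $\|x\|$, with $a_i\to x$ strong${}^*$. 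Strictness yields $\delta_\YY(\pi(a_i))=\Theta(a_i)\to\Theta(x)$ strictly. The net $\pi(a_i)$ is bounded, so it has a $\sigma$-weak cluster point $y\in\Linf(\YY)$; normality of $\delta_\YY$ gives $\delta_\YY(y)=\Theta(x)$, and injectivity of $\delta_\YY$ makes $y$ unique, so $\pi(a_i)\to y=:\tilde\pi(x)$ $\sigma$-weakly. This defines a contractive map $\tilde\pi$ extending $\pi$ with $\delta_\YY\comp\tilde\pi=(\id\tens\pi)\comp\delta_\XX$; that $\tilde\pi$ is a normal $*$-homomorphism follows since it is a strong${}^*$-to-$\sigma$-weak limit on bounded sets of the isometric $*$-homomorphism $\pi$. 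Applying the same construction to $\pi^{-1}$ produces a two-sided inverse, so $\tilde\pi$ is the required normal equivariant isomorphism.

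\emph{Part (1).} Suppose $B_1,B_2\subset\Linf(\XX)$ are two \cst-subalgebras each satisfying the conditions of Definition~\ref{DefQHS} for the same $(\Linf(\XX),\delta_\XX)$; I would prove $B_1=B_2$ by a symmetric containment. Fix $a\in B_1$ and $b\in\C_0(\GG)$. The Podle\'s condition for $B_1$ gives $\delta_\XX(a)(b\tens\I)\in\C_0(\GG)\tens B_1$ as a genuine element. Choosing a bounded net $c_j\in B_2$ with $c_j\to a$ strong${}^*$, strictness of $\delta_\XX\colon\Linf(\XX)\to\M\bigl(\cK(\Ltwo(\GG))\tens B_2\bigr)$ gives $\delta_\XX(c_j)(b\tens\I)\to\delta_\XX(a)(b\tens\I)$ strictly, with each $\delta_\XX(c_j)(b\tens\I)\in\C_0(\GG)\tens B_2$ by Podle\'s for $B_2$. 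Slicing the first leg by vector functionals $\omega\in\B(\Ltwo(\GG))_*$ (which act honestly on the compact-operator leg) produces the elements $(\omega\tens\id)\bigl(\delta_\XX(a)(b\tens\I)\bigr)$, which span $B_1$ as $a,b,\omega$ vary and are exhibited simultaneously as elements of $B_1$ and as limits of elements of $B_2$. By symmetry the roles of $B_1$ and $B_2$ can be exchanged, whence $B_1=B_2$.

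\emph{The main obstacle.} The step I expect to be genuinely hard is the passage from strict (or strong${}^*$) convergence to \emph{norm} convergence in the last argument: a priori the slices land only in the multiplier algebra $\M(B_2)$ rather than in $B_2$ itself. The obstruction is already visible at $x=\I$, where $\delta_\XX(\I)=\I\tens\I$ and every left slice is a scalar multiple of the unit, so no purely coaction-theoretic recipe can separate $\C_0(\XX)$ from $\Linf(\XX)$. Overcoming this forces one to use how the compact-operator leg $\cK(\Ltwo(\GG))$ of condition~(3) interacts with the $\C_0(\GG)$-leg of the Podle\'s condition, cutting the first leg down to honest elements of $\cK(\Ltwo(\GG))\tens B_j$ so that slicing returns elements of $B_j$ in norm. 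This reconciliation of the two density conditions, carried out with the fine structure of $\GG$ (its multiplicative unitary and Haar weights), is the technical heart; once it is in place, both the uniqueness in~(1) and the compatibility of the extension in~(2) follow.
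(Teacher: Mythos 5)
First, a point of reference: the paper itself offers no proof of Theorem \ref{uniqC0X} --- it is quoted from \cite[Propositions 3.4 \& 3.5]{certain} --- so your proposal can only be judged against that source and on its own merits.

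Your part (2) is, in outline, the right argument: extend $\pi$ by combining Kaplansky density with strictness of $\delta_\XX$, and identify the limit using injectivity and normality of the von Neumann coaction $\delta_\YY$. Two of your justifications, however, do not stand as written. The claim that $\tilde\pi$ is a normal $*$-homomorphism ``since it is a strong${}^*$-to-$\sigma$-weak limit of the isometric $*$-homomorphism $\pi$'' is not an argument (multiplication does not pass to $\sigma$-weak limits), and the claim that running the construction for $\pi^{-1}$ yields a two-sided inverse needs care, since you only know $\pi(a_i)\to\tilde\pi(x)$ $\sigma$-weakly, not strong${}^*$, so you cannot simply feed the net $\pi(a_i)$ into the construction for $\pi^{-1}$. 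Both points are repaired by the identity you do establish, $\delta_\YY\comp\tilde\pi=(\id\tens\pi)\comp\delta_\XX$: the right-hand side is a $*$-homomorphism and $\delta_\YY$ is an injective $*$-homomorphism, whence $\tilde\pi$ is a $*$-homomorphism; likewise $\delta_\XX\comp\widetilde{\pi^{-1}}\comp\tilde\pi=(\id\tens\pi^{-1})\comp(\id\tens\pi)\comp\delta_\XX=\delta_\XX$ forces $\widetilde{\pi^{-1}}\comp\tilde\pi=\id$ by injectivity of $\delta_\XX$, and symmetrically; finally, a $*$-isomorphism of von Neumann algebras is automatically normal. With these repairs part (2) is correct.

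Part (1) has a genuine gap --- one you candidly flag yourself, but it is not a deferrable technicality: it is the entire content of the statement. Your argument exhibits each slice $(\omega\tens\id)\bigl(\delta_\XX(a)(b\tens\I)\bigr)\in B_1$ as a strict (hence, for bounded nets, $\sigma$-weak) limit of elements of $B_2$. This information is vacuous: $B_2$ is strongly dense in $\Linf(\XX)$, so \emph{every} element of $\Linf(\XX)$ is such a limit, and no limit taken in the strict, strong${}^*$ or $\sigma$-weak topology can certify membership in the norm-closed algebra $B_2$ --- it certifies membership only in $\M(B_2)$, respectively in $\Linf(\XX)$. Consequently the proposed ``symmetric containment'' $B_1\subset B_2$ never gets started, and the same objection defeats the natural variants (multiplying by $\cK(\Ltwo(\GG))\tens\I$ instead of $\C_0(\GG)\tens\I$, or testing against an approximate unit of $B_2$), because in each case what is needed is norm convergence and strictness supplies only strict convergence. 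What is missing is a characterization of the \cst-version in terms of the pair $(\Linf(\XX),\delta_\XX)$ that operates at the norm level; producing such a characterization is precisely the work carried out in \cite{certain}, and it does not come out of an approximation scheme of the kind you run. So part (2) is acceptable after repairs, while part (1) remains unproven.
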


The term ``embeddable quantum homogeneous space'' is perhaps not the best since it is not the quantum space that embeds, but the corresponding von Neumann algebra of functions. Nevertheless such terminology has been functioning for many years in the context of compact quantum groups (\cite[Definition 1.8]{Pod1}). Note also that quantum homogeneous spaces of quotient type (Definition \ref{qtype}) are prototypical examples of embeddable quantum homogeneous spaces.

Let $\XX$ be an embeddable quantum homogeneous space for a locally compact quantum group $\GG$. As with embeddable \wst-quantum $\GG$-spaces we will from now on regard the embedding $\Linf(\XX)\hookrightarrow\Linf(\GG)$ as part of the data. In particular we will simply view $\Linf(\XX)$ as a right coideal in $\Linf(\GG)$ and $\delta_\XX$ will be identified with $\bigl.\Delta_{\GG}\bigr|_{\Linf(\XX)}$. Nevertheless the symbol $\delta_\XX$ will be used (see e.g.~proof of Proposition \ref{morfizm}).

Before we continue let us note the following:

\begin{proposition}\label{classG}
Let $G$ be a locally compact group. The class of embeddable quantum homogeneous spaces for $G$ considered as a locally compact quantum group coincides with the class of classical homogeneous spaces for $G$.
\end{proposition}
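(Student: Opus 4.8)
The plan is to prove the two inclusions separately, handling the von Neumann algebraic level with Theorem \ref{classX} and then transferring the conclusion to the \cst-level via the uniqueness statement of Theorem \ref{uniqC0X}. At the outset I would record that a classical homogeneous space of $G$ is, up to the usual point-set conventions identifying a transitive action with an orbit map that is a homeomorphism, a quotient $G/H$ by a closed subgroup $H\subset G$, which we treat throughout as a closed quantum subgroup of $\GG$ in the sense of Vaes.

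For the inclusion ``classical homogeneous space $\Rightarrow$ embeddable quantum homogeneous space'' I would argue as follows. Since the locally compact quantum group attached to a classical group is regular, I would invoke Vaes' quotient construction \cite[Theorem 6.1]{Vaes} to produce the \cst-algebra $\C_0(G/H)$ together with a morphism $\delta_{G/H}\in\Mor\bigl(\C_0(G/H),\C_0(G)\tens\C_0(G/H)\bigr)$ satisfying all the conditions of Definition \ref{DefQHS}; this exhibits $G/H$ as a quantum homogeneous space. Its \wst-version is $\Linf(G/H)\subset\Linf(G)$ with $\delta$ given by the restriction of $\Delta_{\GG}$, which by Example \ref{wstEx1} is an embeddable \wst-quantum $\GG$-space, so $G/H$ is an embeddable quantum homogeneous space in the sense of Definition \ref{DefEQHS}. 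Commutativity of $\C_0(G/H)$ then makes it a classical homogeneous space, closing this direction.

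For the reverse inclusion I would start with an arbitrary embeddable quantum homogeneous space $\XX$ for $\GG$. By definition its \wst-version is an embeddable \wst-quantum $\GG$-space, so Theorem \ref{classX}\eqref{classX1} yields a closed subgroup $H\subset G$ with $\Linf(\XX)=\Linf(G/H)$ and $\delta_\XX=\bigl.\Delta_{\GG}\bigr|_{\Linf(G/H)}$, exactly the data of the quotient $G/H$ of the previous paragraph. Being a strongly dense \cst-subalgebra of the commutative algebra $\Linf(G/H)$, the \cst-version $\C_0(\XX)$ is itself commutative. The key move is then to observe that $\C_0(\XX)$ and the $\C_0(G/H)$ built above are both \cst-versions of the \emph{same} \wst-quantum $\GG$-space $\bigl(\Linf(G/H),\Delta_{\GG}|_{\Linf(G/H)}\bigr)$, so the uniqueness part of Theorem \ref{uniqC0X} forces $\C_0(\XX)=\C_0(G/H)$ and identifies $\XX$ with the classical homogeneous space $G/H$.

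The main obstacle I anticipate lies entirely in the classical-to-quantum step: one must be sure that the familiar commutative objects $\C_0(G/H)$ and $\Linf(G/H)$ really assemble into a quantum homogeneous space in the precise sense of Definition \ref{DefQHS}, in particular that the Podle\'s condition holds and that $\delta_\XX$ is strict as a map into $\M\bigl(\cK(\Ltwo(G))\tens\C_0(G/H)\bigr)$. Rather than verifying strictness by hand, I would discharge this by checking that classical groups give regular quantum groups and then citing Vaes' construction, the only care being to confirm that his construction reproduces the standard commutative quotient appearing in Theorem \ref{classX}. Once this compatibility is secured, the uniqueness theorem carries out the rest automatically.
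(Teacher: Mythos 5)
Your proposal is correct and follows essentially the same route as the paper: the paper disposes of this proposition in one line, declaring it immediate from Theorem \ref{classX}, which is exactly the centerpiece of your argument. Your write-up merely makes explicit the details the paper leaves implicit --- the classical-to-quantum direction via regularity and Vaes' quotient construction, and the identification $\C_0(\XX)=\C_0(G/H)$ via the uniqueness part of Theorem \ref{uniqC0X} --- all of which are sound.
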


The proof of Proposition \ref{classG} is immediate e.g.~from Theorem \ref{classX}.

Let us now turn attention to the \cst-algebra $\C_0(\XX)$. We have $\C_0(\XX)\subset\Linf(\XX)\subset\Linf(\GG)$. The situation is clarified further by the next proposition.

\begin{proposition}\label{morfizm}
Let $\GG$ be a locally compact quantum group and let $\XX$ be an embeddable quantum homogeneous space for $\GG$. Then
\begin{enumerate}
\item\label{morfizm1} $\C_0(\XX)\subset\M\bigl(\C_0(\GG)\bigr)$,
\item\label{morfizm2} the embedding $\C_0(\XX)\hookrightarrow\M\bigl(\C_0(\GG)\bigr)$ is an element of $\Mor\bigl(\C_0(\XX),\C_0(\GG)\bigr)$.
\end{enumerate}
\end{proposition}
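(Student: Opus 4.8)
The plan is to work entirely inside $\B(\Ltwo(\GG))$. There we have $\C_0(\XX)\subset\Linf(\XX)\subset\Linf(\GG)$, while $\M(\C_0(\GG))$ is the idealiser of $\C_0(\GG)$, a subalgebra of $\Linf(\GG)=\C_0(\GG)''$. Thus \eqref{morfizm1} is equivalent to the two norm inclusions $[\C_0(\XX)\C_0(\GG)]\subseteq\C_0(\GG)$ and $[\C_0(\GG)\C_0(\XX)]\subseteq\C_0(\GG)$; and once these hold, \eqref{morfizm2} reduces to the non-degeneracy $[\C_0(\XX)\C_0(\GG)]=\C_0(\GG)$. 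I would establish the non-degeneracy first, as it is soft, and isolate the two multiplier inclusions as the real content, where the third condition of Definition \ref{DefQHS} must be spent.

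For non-degeneracy I would argue with the Kac--Takesaki operator, using the convention $\Delta_{\GG}(x)=W(x\tens\I)W^*$ recorded in Remark \ref{cpRem}, so that $W(a\tens\I)=\delta_\XX(a)W$ for $a\in\C_0(\XX)$, together with $\C_0(\GG)=[(\omega\tens\id)(W)\st\omega\in\B(\Ltwo(\GG))_*]$. Slicing the first leg of $W(a\tens\I)=\delta_\XX(a)W$ against $\omega$ produces, on the left, the genuine slice $(\omega(\,\cdot\,a)\tens\id)(W)\in\C_0(\GG)$, and on the right $(\omega\tens\id)(\delta_\XX(a)W)$; since $\delta_\XX(a)\in\M(\C_0(\GG)\tens\C_0(\XX))$, expanding the latter exhibits it as a norm limit of sums $\sum_k q_k c_k$ with $q_k\in\C_0(\XX)$, $c_k\in\C_0(\GG)$, hence inside $[\C_0(\XX)\C_0(\GG)]$. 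Letting $\omega$ and $a$ vary, and using that $\C_0(\XX)$ acts non-degenerately on $\Ltwo(\GG)$ so that the functionals $\omega(\,\cdot\,a)$ are total in $\B(\Ltwo(\GG))_*$, the left-hand sides span a dense subset of $\C_0(\GG)$; thus $\C_0(\GG)\subseteq[\C_0(\XX)\C_0(\GG)]$, and $\C_0(\GG)\subseteq[\C_0(\GG)\C_0(\XX)]$ follows by taking adjoints.

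The heart of the proof is the multiplier inclusion $[\C_0(\XX)\C_0(\GG)]\subseteq\C_0(\GG)$ (and its adjoint), and this is exactly where the third condition of Definition \ref{DefQHS} is indispensable: a priori $\C_0(\XX)$ is merely a C$^*$-subalgebra of $\Linf(\GG)$, whose elements have no reason to multiply $\C_0(\GG)$ back into itself. My plan is to combine the Podle\'s relation $\delta_\XX(a)(\C_0(\GG)\tens\I)\subseteq\C_0(\GG)\tens\C_0(\XX)$ with the membership $\delta_\XX(a)\in\M(\cK(\Ltwo(\GG))\tens\C_0(\XX))$ and with the standard identity $\cK(\Ltwo(\GG))=[\C_0(\GG)\C_0(\hh{\GG})]$, the compacts forming a two-sided ideal of $\B(\Ltwo(\GG))$. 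Concretely, for $a\in\C_0(\XX)$ and $c=(\omega\tens\id)(W)\in\C_0(\GG)$ one writes $ca=(\omega\tens\id)(W(\I\tens a))$ and uses $\delta_\XX(a)$ to transport $a$ into the compact first leg, the membership in $\M(\cK(\Ltwo(\GG))\tens\C_0(\XX))$ forcing the resulting slices to remain norm-close to $\C_0(\GG)$.

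I expect the main obstacle to be precisely the passage from weak to norm statements. For a general, possibly non-co-amenable $\GG$ there is no bounded co-unit, so one cannot recover $a$ by evaluating the $\XX$-leg of $\delta_\XX(a)$ at a point; the only available recovery of $a$ from $\delta_\XX(a)$ is weak, which by itself would place $ca$ and $ac$ only in the weak closure rather than in $\C_0(\GG)$. This is where the strictness of $\delta_\XX\colon\Linf(\XX)\to\M(\cK(\Ltwo(\GG))\tens\C_0(\XX))$ must be invoked: it converts bounded strong${}^*$-convergent nets in $\Linf(\XX)$ (for instance $au_j$ for an approximate unit $(u_j)$ of $\C_0(\XX)$) into strictly convergent nets of multipliers, and this is what upgrades the weak control of the products to genuine norm membership in $\C_0(\GG)$. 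Once both multiplier inclusions and the non-degeneracy are in hand, \eqref{morfizm1} holds and the inclusion $\C_0(\XX)\hookrightarrow\M(\C_0(\GG))$ is a non-degenerate $*$-homomorphism, that is an element of $\Mor(\C_0(\XX),\C_0(\GG))$, which is \eqref{morfizm2}.
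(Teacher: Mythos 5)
Your reduction of \eqref{morfizm1} to the two idealiser inclusions and of \eqref{morfizm2} to non-degeneracy is a fair reformulation, and the identities $W(a\tens\I)=\delta_\XX(a)W$ and $ca=(\omega\tens\id)\bigl(W(\I\tens a)\bigr)$ are correct; but both of your core steps contain genuine gaps. In the non-degeneracy argument the decisive claim is that $(\omega\tens\id)\bigl(\delta_\XX(a)W\bigr)\in\bigl[\C_0(\XX)\C_0(\GG)\bigr]$ because ``$\delta_\XX(a)\in\M\bigl(\C_0(\GG)\tens\C_0(\XX)\bigr)$, expanding exhibits it as a norm limit of sums $\sum_k q_kc_k$''. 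This is false as stated: neither $\delta_\XX(a)$ nor $W$ is a norm limit of finite sums of simple tensors (that would place them inside $\C_0(\GG)\tens\C_0(\XX)$, resp.\ $\C_0(\hh{\GG})\tens\C_0(\GG)$, rather than in the multiplier algebras), and a first-leg slice of a product of two multipliers admits no termwise decomposition --- the natural expansion over an orthonormal basis converges only weakly, which is precisely the weak-versus-norm trap you yourself flag. What converts this into a norm statement is compact localization through the Podle\'s condition: combining $\bigl[\delta_\XX(\C_0(\XX))(\C_0(\GG)\tens\I)\bigr]=\C_0(\GG)\tens\C_0(\XX)$ with $\cK\bigl(\Ltwo(\GG)\bigr)=\bigl[\C_0(\GG)\C_0(\hh{\GG})\bigr]$ gives
\[
\bigl[\delta_\XX\bigl(\C_0(\XX)\bigr)\bigl(\cK(\Ltwo(\GG))\tens\C_0(\GG)\bigr)\bigr]
=\bigl[\cK\bigl(\Ltwo(\GG)\bigr)\tens\C_0(\XX)\C_0(\GG)\bigr],
\]
and slicing this identity (this is the paper's equation \eqref{da}), together with the approximate-unit computation $\delta_\XX(e_i)(k\tens a)=W(e_i\tens\I)W^*(k\tens a)\to k\tens a$ in norm, is what yields $\C_0(\GG)=\bigl[\C_0(\XX)\C_0(\GG)\bigr]$. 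Your proposal never produces an identity of this kind, and without it the slices are only controlled weakly.

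The step you call the heart, $\bigl[\C_0(\XX)\C_0(\GG)\bigr]\subseteq\C_0(\GG)$, is not an argument as written, and its guiding idea is misdirected. In $ca=(\omega\tens\id)\bigl(W(\I\tens a)\bigr)$ the element $a$ sits in the \emph{second} leg of $W$, while the only identity embeddability gives you, $W(a\tens\I)=\delta_\XX(a)W$, moves an $a$ sitting in the \emph{first} leg; there is no formula that ``transports'' $\I\tens a$ through $W$ via $\delta_\XX(a)$. Moreover, strictness (the third condition of Definition \ref{DefQHS}) is not the missing ingredient: your test net $au_j$, with $(u_j)$ an approximate unit of $\C_0(\XX)$, already converges in norm, so strictness adds nothing there, and in fact the paper's proof of this proposition never invokes strictness at all. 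What actually proves \eqref{morfizm1} is: the Podle\'s condition gives $\C_0(\XX)=\bigl[(\omega\tens\id)\delta_\XX(\C_0(\XX))\st\omega\in\B(\Ltwo(\GG))_*\bigr]$; embeddability gives $\delta_\XX(x)=W(x\tens\I)W^*\in\M\bigl(\cK(\Ltwo(\GG))\tens\C_0(\GG)\bigr)$ because $W\in\M\bigl(\cK(\Ltwo(\GG))\tens\C_0(\GG)\bigr)$; and first-leg slices by normal functionals of multipliers of $\cK\bigl(\Ltwo(\GG)\bigr)\tens\C_0(\GG)$ lie in $\M\bigl(\C_0(\GG)\bigr)$, which is norm closed. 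So the burden is carried by the Podle\'s condition and the $W$-implementation of $\delta_\XX$, not by strictness; reorganized around these two facts your skeleton does collapse onto the paper's proof, but as it stands neither inclusion is established.
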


\begin{proof}
Let $W\in\M\bigl(\cK\bigl(\Ltwo(\GG)\bigr)\tens\C_0(\GG)\bigr)$ be the Kac-Takesaki operator for $\GG$. Since for any $x\in\C_0(\XX)$ we have $\delta_\XX(x)=\Delta_{\GG}(x)=W(x\tens\I)W^*$, clearly
\begin{equation}\label{eqdel}
\delta_{\XX}\bigl(\C_0(\XX)\bigr)\subset\M\bigl(\cK(\Ltwo(\GG))\tens\C_0(\GG)\bigr).
\end{equation}
Now the Podle\'s condition for $\delta_\XX$ implies that
\[
\C_0(\XX)=\bigl[(\omega\tens\id)\delta_{\XX}(\C_0(\XX))\st\omega\in\B(\Ltwo(\GG))_*\bigr]
\]
which together with \eqref{eqdel} shows that $\C_0(\XX)\subset\M\bigl(\C_0(\GG)\bigr)$ and proves \eqref{morfizm1}.

In order to see that the inclusion $\C_0(\XX)\hookrightarrow\M\bigl(\C_0(\GG)\bigr)$ is a morphism from $\C_0(\XX)$ to $\C_0(\GG)$ we first note that the Podle\'s condition for $\delta_\XX$ implies the following equality:
\begin{equation}\label{da}
\bigl[\C_0(\XX)\C_0(\GG)\bigr]
=\bigl[(\omega\tens\id)(\delta_{\XX}(\C_0(\XX))(\cK(\Ltwo(\GG))
\tens\C_0(\GG)))\st\omega\in\B(\Ltwo(\GG))_*\bigr].
\end{equation}
Let $(e_i)_{i\in\cI}$ be an approximate unit of $\C_0(\XX)$ and let us fix a simple tensor $k\tens{a}\in\cK\bigl(\Ltwo(\GG)\bigr)\tens\C_0(\GG)$. Since for each $i$
\[
\delta_{\XX}(e_i)(k\tens{a})=W(e_i\tens\I)W^*(k\tens{a}),
\]
we find that $\lim\limits_{i\in\cI}\delta_{\XX}(e_i)(k\tens{a})=k\tens{a}$ in norm. In particular for any $a\in\C_0(\GG)$ we have
\[
a\in
\bigl[(\omega\tens\id)(\delta_\XX(\C_0(\XX))(\cK(\Ltwo(\GG))\tens\C_0(\GG)))\st\omega\in\B(\Ltwo(\GG))_*\bigr].
\]
This, together with \eqref{da}, proves that $\C_0(\GG)=\bigl[\C_0(\XX)\C_0(\GG)\bigr]$ which is precisely \eqref{morfizm2}.
\end{proof}

At the end of this section let us show that whenever $\HH$ is a closed quantum subgroup of a locally compact quantum group $\GG$ then $\Linf(\hh{\HH})$ may be interpreted as an embeddable quantum homogeneous $\hh{\GG}$-space with a \cst-version $\C_0(\hh{\HH})$. This result will be used in Section \ref{expbc} in the analysis of a quotient quantum homogeneous spaces arising in the bicrossed product construction.

\begin{proposition}\label{cvhProp}
Let $\HH$ be a closed quantum subgroup of $\GG$ with the corresponding normal inclusion $\gamma\colon\Linf(\hh{\HH})\to\Linf(\hh{\GG})$. Define $\XX$ by putting $\Linf(\XX)=\gamma\bigl(\Linf(\hh{\HH})\bigr)$,and let $\delta_\XX=\bigl.\Delta_{\hh{\GG}}\bigr|_{\Linf(\XX)}$. Then $\XX$ is an embeddable \wst-quantum $\hh{\GG}$-space. Moreover $\XX$ is an embeddable quantum homogeneous space and $\C_0(\XX)=\gamma\bigl(\C_0(\hh{\HH})\bigr)$.
\end{proposition}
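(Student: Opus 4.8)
The plan is to verify directly that $\XX$ is an embeddable \wst-quantum $\hh\GG$-space and then to exhibit $\gamma(\C_0(\hh\HH))$ as a \cst-version satisfying Definition \ref{DefQHS}. Throughout I would use the defining feature of a closed quantum subgroup in the sense of Vaes, namely that $\gamma$ intertwines the comultiplications, $\Delta_{\hh\GG}\comp\gamma=(\gamma\tens\gamma)\comp\Delta_{\hh\HH}$ (see \cite[Theorem 3.3]{subgroups}).

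For the first assertion, note that $\gamma$ being a normal injective unital $*$-homomorphism makes $\Linf(\XX)=\gamma(\Linf(\hh\HH))$ a von Neumann subalgebra of $\Linf(\hh\GG)$. The intertwining relation gives at once $\Delta_{\hh\GG}(\Linf(\XX))=(\gamma\tens\gamma)(\Delta_{\hh\HH}(\Linf(\hh\HH)))\subset\Linf(\XX)\vtens\Linf(\XX)\subset\Linf(\hh\GG)\vtens\Linf(\XX)$, so $\Linf(\XX)$ is a coideal and $\delta_\XX=\bigl.\Delta_{\hh\GG}\bigr|_{\Linf(\XX)}$ is well defined; coassociativity is inherited. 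For ergodicity, $\Delta_{\hh\GG}(\gamma(y))=\I\tens\gamma(y)$ forces $\Delta_{\hh\HH}(y)=\I\tens y$ by injectivity of $\gamma\tens\gamma$, whence $y\in\CC\I$ since $\hh\HH$ is ergodic as a \wst-quantum $\hh\HH$-space; as $\gamma$ is unital this yields an embeddable \wst-quantum $\hh\GG$-space.

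Next I would set $\C_0(\YY)=\gamma(\C_0(\hh\HH))$ and $\delta_\YY=\bigl.\delta_\XX\bigr|_{\C_0(\YY)}$. The structural input needed here, again from \cite{subgroups}, is that $\gamma$ restricts to a nondegenerate morphism $\gamma_0\in\Mor(\C_0(\hh\HH),\C_0(\hh\GG))$ intertwining the comultiplications at the \cst-level; in particular $[\gamma_0(\C_0(\hh\HH))\C_0(\hh\GG)]=\C_0(\hh\GG)$. Condition (1) of Definition \ref{DefQHS} holds because $\gamma$ is a $\sigma$-weak homeomorphism onto its image and so carries the strongly dense subalgebra $\C_0(\hh\HH)$ onto a strongly dense subalgebra of $\Linf(\XX)$. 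For condition (2), $\delta_\YY$ is injective and coassociative by restriction, it is a morphism into $\M(\C_0(\hh\GG)\tens\C_0(\YY))$ because $\delta_\YY\comp\gamma_0=(\gamma_0\tens\gamma_0)\comp\Delta_{\hh\HH}$ with $\Delta_{\hh\HH}$ a morphism and $\gamma_0$ nondegenerate, and the Podle\'s condition is the short computation
\[
[\delta_\YY(\C_0(\YY))(\C_0(\hh\GG)\tens\I)]=[(\gamma_0\tens\gamma_0)(\Delta_{\hh\HH}(\C_0(\hh\HH)))(\C_0(\hh\GG)\tens\I)]=\C_0(\hh\GG)\tens\C_0(\YY),
\]
obtained by inserting $[\gamma_0(\C_0(\hh\HH))\C_0(\hh\GG)]=\C_0(\hh\GG)$ in the first leg, applying the Podle\'s condition $[\Delta_{\hh\HH}(\C_0(\hh\HH))(\C_0(\hh\HH)\tens\I)]=\C_0(\hh\HH)\tens\C_0(\hh\HH)$ for $\hh\HH$, and using nondegeneracy of $\gamma_0$ once more. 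Thus $\YY$ is a \cst-quantum $\hh\GG$-space.

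The hard part will be condition (3): showing $\delta_\XX(\Linf(\XX))\subset\M(\cK(\Ltwo(\hh\GG))\tens\C_0(\YY))$ together with strictness of $\delta_\XX$ into this algebra. I would deduce it by extension from the \cst-level: the nondegenerate morphism $\delta_\YY$ extends to a strictly continuous $*$-homomorphism on $\M(\C_0(\YY))$, and since $\Linf(\XX)\subset\M(\C_0(\YY))$ (transported by $\gamma$ from $\Linf(\hh\HH)\subset\M(\C_0(\hh\HH))$) with $\C_0(\YY)$ strongly dense, this extension agrees with $\delta_\XX$ on $\Linf(\XX)$ and supplies both the containment and the strictness in the sense of \cite[Definition 3.1]{Vaes}. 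The delicate point is matching bounded strong${}^*$-convergent nets to strictly convergent ones while localising the second leg into the smaller algebra $\C_0(\YY)$; the cleanest route is to start from the fact that $\hh\GG$ is itself a quantum homogeneous space, so $\Delta_{\hh\GG}\colon\Linf(\hh\GG)\to\M(\cK(\Ltwo(\hh\GG))\tens\C_0(\hh\GG))$ is already strict, and then to sharpen the second leg using the coideal property from the first step and nondegeneracy of $\gamma_0$, exactly as the \wst/\cst compatibility is handled in \cite{certain}. Once Definition \ref{DefQHS} is met, $\XX$ is a quantum homogeneous space whose \wst-version is embeddable, hence embeddable in the sense of Definition \ref{DefEQHS}, and uniqueness of the \cst-version (Theorem \ref{uniqC0X}) identifies $\C_0(\XX)$ with $\gamma(\C_0(\hh\HH))$.
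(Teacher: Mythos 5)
Your treatment of the \wst-part and of the \cst-axioms (coassociativity, the Podle\'s condition derived from nondegeneracy of $\gamma_0$) is sound, and in fact more explicit than the paper, which disposes of those points with ``may be checked directly''; the nondegeneracy of $\gamma|_{\C_0(\hh\HH)}$ can indeed be extracted from \cite{subgroups} (Vaes-closed implies Woronowicz-closed) together with the behaviour of reducing morphisms. The gap is in condition (3) of Definition \ref{DefQHS}, exactly the step you call the hard part. Your primary route rests on the inclusion $\Linf(\hh\HH)\subset\M\bigl(\C_0(\hh\HH)\bigr)$, which is false whenever $\HH$ is non-compact: already for a classical non-compact group $K$ one has $\M\bigl(\C_0(K)\bigr)=\C_b(K)\subsetneq\Linf(K)$. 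Consequently $\Linf(\XX)\not\subset\M\bigl(\C_0(\YY)\bigr)$, and the strict extension of the \cst-morphism $\delta_\YY$ to $\M\bigl(\C_0(\YY)\bigr)$ never reaches the von Neumann algebra $\Linf(\XX)$. The whole point of condition (3) is that the factor $\cK\bigl(\Ltwo(\GG)\bigr)$ in the first leg is what allows elements of $\Linf(\XX)$ which are not multipliers of $\C_0(\YY)$ to land in a multiplier algebra; this cannot be produced by the ordinary strict-extension calculus for \cst-morphisms.

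Your fallback (start from strictness of $\Delta_{\hh{\GG}}\colon\Linf(\hh{\GG})\to\M\bigl(\cK(\Ltwo(\GG))\tens\C_0(\hh{\GG})\bigr)$ and ``sharpen the second leg'') names the goal but supplies no mechanism: for $x\in\Linf(\XX)$, $k\in\cK\bigl(\Ltwo(\GG)\bigr)$ and $d\in\C_0(\YY)$, the available information --- namely $\Delta_{\hh{\GG}}(x)\in\M\bigl(\cK(\Ltwo(\GG))\tens\C_0(\hh{\GG})\bigr)$ and $\Delta_{\hh{\GG}}(x)\in\Linf(\hh{\GG})\vtens\Linf(\XX)$ --- does not even show that $\Delta_{\hh{\GG}}(x)(k\tens d)$ lies in $\cK\bigl(\Ltwo(\GG)\bigr)\tens\C_0(\hh{\GG})$, because $d$ is merely a multiplier of $\C_0(\hh{\GG})$, and the normal map $\gamma\tens\gamma$ has no a priori compatibility with multiplier algebras whose first leg involves $\cK$. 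Nothing in \cite{certain} performs this localisation. The missing idea, which is the actual content of the paper's proof, is to implement $\gamma$ spatially: normality and injectivity give a Hilbert space $\sH$ and a unitary $T\colon\sH\tens\Ltwo(\HH)\to\sH\tens\Ltwo(\GG)$ with $\I_{\sH}\tens\gamma(y)=T(\I_{\sH}\tens y)T^*$. Writing $\Delta_{\hh{\GG}}\bigl(\gamma(y_i)\bigr)=(\gamma\tens\id)\bigl(Y(y_i\tens\I)Y^*\bigr)$ with $Y=(\id\tens\gamma)\hh{W}^{\HH}$, conjugating by $T_{12}$ and cutting the resulting scalar first leg by a rank-one projection $P\in\cK(\sH)$, the paper reduces both the containment and the strictness to the elementary fact that a norm-bounded strong${}^*$-convergent net multiplied by a fixed element of $\cK(\sH)\tens\cK\bigl(\Ltwo(\GG)\bigr)\tens\gamma\bigl(\C_0(\hh{\HH})\bigr)$ is norm-convergent. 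Without this (or an equivalent) device your verification of condition (3) does not go through.
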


\begin{proof}
The fact that $\XX$ is an embeddable \wst-quantum $\hh{\GG}$-space is clear from the property of $\gamma$:
\[
\Delta_{\hh{\GG}}\comp\gamma=(\gamma\tens\gamma)\comp\Delta_{\hh{\HH}}.
\]
We will now show that $\XX$ is in fact an embeddable quantum homogeneous space for $\hh{\GG}$.

Note that since $\gamma$ is normal and injective, there exists a Hilbert space $\sH$ and a unitary operator $T\colon\sH\tens\Ltwo(\HH)\to\sH\tens\Ltwo(\GG)$ such that
\begin{equation}\label{timpl}
\I_{\sH}\tens\gamma(x)=T(\I_{\sH}\tens{x})T^*.
\end{equation}
Let $\hh{W}\in\M\bigl(\C_0(\HH)\tens\C_0(\hh{\HH})\bigr)$ be the Kac-Takesaki operator of $\hh{\HH}$. Let $(y_i)_{i\in\cI}$ be a bounded net of elements of $\Linf(\hh{\HH})$ convergent in the strong${}^*$ topology and take $x\in\cK\bigl(\Ltwo(\GG)\bigr)$ and $d\in\gamma\bigl(\C_0(\hh{\HH})\bigr)$. Define $Y=(\id\tens\gamma)\hh{W}$. In order to check the strictness condition of Definition \ref{DefQHS} we compute
\[
\begin{split}
\Delta_{\hh{\GG}}\bigl(\gamma(y_i)\bigr)(x\tens{d})
&=(\gamma\tens\gamma)\bigl(\Delta_{\hh{\HH}}(y_i)\bigr)(x\tens{d})\\
&=(\gamma\tens\id)\bigl(Y(y_i\tens\I)Y^*\bigr)(x\tens{d}).
\end{split}
\]
Using \eqref{timpl} we see that
\[
\begin{split}
\I_{\sH}\tens\bigl(\Delta_{\hh{\GG}}(\gamma(y_i))(x\tens{d})\bigr)
&=\I_{\sH}\tens\bigl((\gamma\tens\id)(Y(y_i\tens\I)Y^*)(x\tens{d})\bigr)\\
&=T_{12}Y_{23}(\I_{\sH}\tens{y_i}\tens\I)Y^*_{23}T^*_{12}(\I_{\sH}\tens{x}\tens{d}).
\end{split}
\]
Let us fix a $1$-dimensional projection $P\in\cK(\sH)$. Note that since the first leg of all elements of the net
\[
\bigl(T_{12}Y_{23}(\I\tens{y_i}\tens\I)Y^*_{23}T^*_{12}(\I\tens{x}\tens{d})\bigr)_{i\in\cI}
\]
is in $\CC\I_{\sH}$, its norm-convergence is equivalent to norm-convergence of the net
\begin{equation}\label{nr}
\bigl(T_{12}Y_{23}(\I\tens{y_i}\tens\I)Y^*_{23}T^*_{12}(P\tens{x}\tens{d})\bigr)_{i\in\cI}.
\end{equation}
Furthermore, since
\[
Y^*_{23}T^*_{12}(P\tens{x}\tens{d})\in\cK(\sH)\tens\cK\bigl(\Ltwo(\hh{\GG})\bigr)\tens\gamma\bigl(\C_0(\hh{\HH})\bigr),
\]
we see that strong${}^*$-convergence of $(y_i)_{i\in\cI}$ implies norm-convergence of the net \eqref{nr}. This is in turn equivalent to the norm-convergence of the net $\bigl(\Delta_{\hh{\GG}}(\gamma(y_i))(x\tens{d})\bigr)_{i\in\cI}$. Norm-convergence of $\bigl(\Delta_{\hh{\GG}}(\gamma(y_i))^*(x\tens{d})\bigr)_{i\in\cI}$ is proved similarly and it follows that we get that
\[
\Delta_{\hh{\GG}}\bigl(\Linf(\hh{\HH})\bigr)\subset\M\bigl(\cK(\Ltwo(\GG))\tens\gamma(\C_0(\hh{\HH}))\bigr)
\]
and the map $\Delta_{\hh{\GG}}\colon\gamma\bigl(\Linf(\hh{\HH})\bigr)\to\M\bigl(\cK(\Ltwo(\GG))\tens\gamma(\C_0(\hh{\HH}))\bigr)$ is strict. All other conditions of Definition \ref{DefEQHS} may be checked directly and we find that $\XX$ is an embeddable quantum homogeneous space with \cst-version $\C_0(\XX)=\gamma\bigl(\C_0(\hh{\HH})\bigr)$.
\end{proof}

\section{Co-compact case}\label{ccSect}

\begin{proposition}\label{cocomp}
Let $\GG$ be a locally compact quantum group and let $\sA\subset\C_0(\GG)$ be a non-degenerate \cst-subalgebra such that
\begin{itemize}
\item[$\blacktriangleright$] $\Delta_{\GG}(\sA)\subset\M\bigl(\C_0(\GG)\tens\sA\bigr)$,
\item[$\blacktriangleright$] $\bigl[(\C_0(\GG)\tens\I)\Delta_{\GG}(\sA)\bigr]=\C_0(\GG)\tens\sA$.
\end{itemize}
Let $\sN$ be the strong closure of $\sA$. Then
\begin{enumerate}
\item\label{cocomp1} $\Delta_{\GG}(\sN)\subset\Linf(\GG)\vtens\sN$,
\item the pair $(\sN,\gamma)$ with $\gamma=\bigl.\Delta_{\GG}\bigr|_{\sN}$ is an embeddable quantum homogeneous whose \cst-version coincides with $\bigl(\sA,\bigl.\Delta_{\GG}\bigr|_{\sA}\bigr)$.
\end{enumerate}
\end{proposition}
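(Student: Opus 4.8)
The plan is to verify, one clause at a time, the requirements of Definitions~\ref{EWstGs}, \ref{DefQHS} and \ref{DefEQHS} for the pair $(\sN,\gamma)$, with $\gamma=\bigl.\Delta_{\GG}\bigr|_{\sN}$, taking $\bigl(\sA,\bigl.\Delta_{\GG}\bigr|_{\sA}\bigr)$ as the candidate \cst-version. I would begin with assertion~\eqref{cocomp1}. Since $\sA$ is a non-degenerate \cst-subalgebra of $\C_0(\GG)$ it acts non-degenerately on $\Ltwo(\GG)$, so $\sN=\sA''$ is a von Neumann algebra containing $\I$ with $\sN\subset\Linf(\GG)$. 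As $\C_0(\GG)\tens\sA$ then acts non-degenerately on $\Ltwo(\GG)\tens\Ltwo(\GG)$, every multiplier is a strong limit of elements of the algebra, i.e.\ $\M\bigl(\C_0(\GG)\tens\sA\bigr)\subset\bigl(\C_0(\GG)\tens\sA\bigr)''=\Linf(\GG)\vtens\sN$. The first hypothesis thus gives $\Delta_{\GG}(\sA)\subset\Linf(\GG)\vtens\sN$; since $\Delta_{\GG}$ is normal, $\sA$ is $\sigma$-weakly dense in $\sN$ and $\Linf(\GG)\vtens\sN$ is $\sigma$-weakly closed, this extends to $\Delta_{\GG}(\sN)\subset\Linf(\GG)\vtens\sN$, which is~\eqref{cocomp1}.

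Next I would show that the two candidate structures are of the required type. The map $\gamma$ is normal, unital, injective and coassociative because $\Delta_{\GG}$ is, and it takes values in $\Linf(\GG)\vtens\sN$ by~\eqref{cocomp1}; ergodicity is inherited from the ergodicity of $\GG$ acting on itself (Example~\ref{wstEx1}), since $\{x\in\Linf(\GG):\Delta_{\GG}(x)=\I\tens x\}=\CC\I$ and $\sN\subset\Linf(\GG)$. Hence $(\sN,\gamma)$ is an embeddable \wst-quantum $\GG$-space in the sense of Definition~\ref{EWstGs}. On the \cst-level, the first hypothesis is exactly $\Delta_{\GG}(\sA)\subset\M\bigl(\C_0(\GG)\tens\sA\bigr)$; taking adjoints in the second hypothesis (legitimate because $\C_0(\GG)$ and $\sA$ are $*$-closed and $\Delta_{\GG}$ is a $*$-map) turns $\bigl[(\C_0(\GG)\tens\I)\Delta_{\GG}(\sA)\bigr]=\C_0(\GG)\tens\sA$ into the Podle\'s condition $\bigl[\Delta_{\GG}(\sA)(\C_0(\GG)\tens\I)\bigr]=\C_0(\GG)\tens\sA$. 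Multiplying on the right by $\I\tens\sA$ and using $[\sA\sA]=\sA$ then gives $\bigl[\Delta_{\GG}(\sA)(\C_0(\GG)\tens\sA)\bigr]=\C_0(\GG)\tens\sA$, so $\bigl.\Delta_{\GG}\bigr|_{\sA}\in\Mor\bigl(\sA,\C_0(\GG)\tens\sA\bigr)$; injectivity and coassociativity are again inherited, and $\bigl(\sA,\bigl.\Delta_{\GG}\bigr|_{\sA}\bigr)$ is a \cst-quantum $\GG$-space.

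It then remains to verify the three conditions of Definition~\ref{DefQHS} relating the two versions, which is the technical heart of the argument. Conditions (1) and (2) hold by construction ($\sA$ is strongly dense in $\sN$ and all comultiplications are restrictions of $\Delta_{\GG}$). For condition (3) I would first establish $\Delta_{\GG}(\sA)\subset\M\bigl(\cK(\Ltwo(\GG))\tens\sA\bigr)$: using non-degeneracy of $\C_0(\GG)$ in the form $\bigl[\C_0(\GG)\cK(\Ltwo(\GG))\bigr]=\cK(\Ltwo(\GG))$, I approximate $k\in\cK(\Ltwo(\GG))$ by sums $\sum_j c_jk_j$ and note $\Delta_{\GG}(a)(c_j\tens b)(k_j\tens\I)\in(\C_0(\GG)\tens\sA)\bigl(\cK(\Ltwo(\GG))\tens\I\bigr)\subset\cK(\Ltwo(\GG))\tens\sA$, the factor $\Delta_{\GG}(a)(c_j\tens b)$ lying in $\C_0(\GG)\tens\sA$ by the first hypothesis; the product on the other side is symmetric. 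Strictness, together with $\Delta_{\GG}(\sN)\subset\M\bigl(\cK(\Ltwo(\GG))\tens\sA\bigr)$, I would obtain as in the proofs of Propositions~\ref{morfizm} and~\ref{cvhProp}: writing $\Delta_{\GG}(x)=W(x\tens\I)W^*$ with $W\in\M\bigl(\cK(\Ltwo(\GG))\tens\C_0(\GG)\bigr)$, for a bounded net $x_i\to y$ strong${}^*$ and $T\in\cK(\Ltwo(\GG))\tens\sA$ one has $W^*T\in\cK(\Ltwo(\GG))\tens\C_0(\GG)$, and approximating $W^*T$ by elementary tensors and invoking $\|(x_i-y)k\|\to 0$ for compact $k$ gives $\Delta_{\GG}(x_i)T\to\Delta_{\GG}(y)T$ in norm (and similarly for adjoints). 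This simultaneously yields strictness and exhibits $\Delta_{\GG}(y)$ as a strict limit of multipliers, hence in $\M\bigl(\cK(\Ltwo(\GG))\tens\sA\bigr)$. By Definitions~\ref{DefQHS} and~\ref{DefEQHS} the second assertion follows.

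The main obstacle is precisely condition (3): passing from the first leg $\C_0(\GG)$ supplied by the hypothesis to the first leg $\cK(\Ltwo(\GG))$ demanded by Definition~\ref{DefQHS}, and verifying strictness. Both nevertheless follow established patterns in the paper --- the former via the identity $\bigl[\C_0(\GG)\cK(\Ltwo(\GG))\bigr]=\cK(\Ltwo(\GG))$ combined with the second-leg control in the hypothesis, and the latter via the compactness of the first leg of $W$ --- so I expect no genuinely new difficulty beyond careful bookkeeping with the multiplier algebras.
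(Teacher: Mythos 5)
Your proposal is correct and follows essentially the same route as the paper: the heart of both arguments is writing $\Delta_{\GG}(x)=W(x\tens\I)W^*$ with $W\in\M\bigl(\cK(\Ltwo(\GG))\tens\C_0(\GG)\bigr)$ and using the fact that a bounded strong${}^*$-convergent net multiplied by a compact operator is norm-convergent, which yields both $\Delta_{\GG}(\sN)\subset\M\bigl(\cK(\Ltwo(\GG))\tens\sA\bigr)$ and the required strictness. Your additional verifications (ergodicity, the Podle\'s condition obtained by taking adjoints, the morphism property of $\bigl.\Delta_{\GG}\bigr|_{\sA}$) are routine points which the paper treats as immediate consequences of the assumptions.
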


\begin{proof}
Statement \eqref{cocomp1} follows immediately from the assumptions. It remains to show that
\begin{equation}\label{contNM}
\Delta_{\GG}(\sN)\subset\M\bigl(\cK(\Ltwo(\GG))\tens\sA\bigr)
\end{equation}
and the map
\begin{equation}\label{beStrict}
\sN\ni{x}\longmapsto\Delta_{\GG}(x)\in\M\bigl(\cK(\Ltwo(\GG))\tens\sA\bigr)
\end{equation}
is strict (cf.~Definition \ref{DefQHS}). For this consider a bounded net $(a_i)_{i\in\cI}$ of elements of $\sA$ convergent to $x\in\sN$ in the strong${}^*$ topology. For any $y\in\cK\bigl(\Ltwo(\GG)\bigr)\tens\sA$ we have
\[
\Delta_{\GG}(a_i)y=W(a_i\tens\I)W^*y.
\]
Since $W\in\M\bigl(\cK(\Ltwo(\GG))\tens\C_0(\GG)\bigr)$ we see that $\Delta_{\GG}(a_i)d=W(a_i\tens\I)z$, where $z\in\cK\bigl(\Ltwo(\GG)\bigr)\tens\C_0(\GG)$. In particular
\[
\Delta_{\GG}(a_i)y\xrightarrow[i\in\cI]{}\Delta_{\GG}(x)y
\]
in norm. Similarly $\Delta_{\GG}(a_i)^*y\xrightarrow[i\in\cI]{}\Delta_{\GG}(x)^*y$ (remember $a_i\xrightarrow[i]{}x$ in the strong${}^*$ topology). It follows that $\Delta_{\GG}(x)$ is a strict limit of elements of $\M\bigl(\C_0(\GG)\tens\sA\bigr)$ and, in particular, we get \eqref{contNM}.

Strictness of \eqref{beStrict} is established in exactly the same way: for a bounded and strong${}^*$-convergent net $(x_i)_{i\in\cI}$ of elements of $\sN$ and $y\in\cK\bigl(\Ltwo(\GG)\bigr)\tens\sA$ the nets $\bigl(\Delta_{\GG}(x_i)y\bigr)_{i\in\cI}$ and $\bigl(\Delta_{\GG}(x_i)^*y\bigr)_{i\in\cI}$ are norm-convergent to $\Delta_{\GG}\bigl(\lim\limits_{i\in\cI}x_i\bigr)y$ and $\Delta_{\GG}\bigl(\lim\limits_{i\in\cI}x_i\bigr)^*y$ respectively.
\end{proof}

In the proof of Proposition \ref{cocomp} we used the well-known fact that a bounded strongly convergent net of operators multiplied by a compact operator is norm-convergent.

\begin{definition}\label{DefCC}
Let $\GG$ be a locally compact quantum group. A quantum homogeneous space $\XX$ for $\GG$ such that $\C_0(\XX)\subset\C_0(\GG)$ is called \emph{co-compact}.
\end{definition}

\begin{example}\label{ccEx}
\noindent\begin{enumerate}
\item Let $\GG$ be a compact quantum group with faithful Haar measure. In \cite[Definition 1.8]{Pod1} Podle\'s defined an embeddable action of $\GG$ to be a ``compact'' \cst-quantum $\GG$-space $\XX$ (i.e.~the corresponding \cst-algebra $\C_0(\XX)$ is unital and hence written $\C(\XX)$) for which there exists $\Psi\in\Mor\bigl(\C(\XX),\C(\GG)\bigr)$ such that $(\id\tens\Psi)\comp\delta_\XX=\Delta_{\GG}\comp\Psi$ (in fact this definition already appeared in \cite[Definicja 2.9]{podPHD}). Letting $\Linf(\XX)$ be the strong closure of $\Psi\bigl(\C(\XX)\bigr)$ inside $\B\bigl(\Ltwo(\GG)\bigr)$ we obtain an embeddable quantum homogeneous space in the sense of Definition \ref{DefEQHS}. Moreover all such quantum homogeneous spaces are clearly co-compact.
\item\label{ccEx2} Let $\GG$ be a locally compact quantum group and let $\HH$ be a compact quantum subgroup of $\GG$ i.e.~we assume that there exists a surjective morphism $\pi\colon\C^\uu_0(\GG)\to\C^\uu_0(\HH)$ intertwining the corresponding comultiplications. Using $\pi$ we define a \cst-algebra of elements that are constant on the right $\HH$-cosets:
\[
\CC^\uu_0(\GG/\HH)=\bigl\{a\in\CC^\uu_0(\GG)\st(\id\tens\pi)(\Delta^\uu_{\GG}(a))=a\tens\I\bigr\}.
\]
Using \cite[Theorem 5.1]{Psol} (note that this theorem applies to our situation, i.e.~$\C_0^\uu(\GG)$ is a bisimplifiable Hopf \cst-algebra with a continuous counit) we see that the comultiplication $\Delta^\uu_{\GG}$ restricted to $\C^\uu_0(\GG/\HH)$ gives rise to a continuous coaction of $\bigl(\C_0^\uu(\GG),\Delta_{\GG}^\uu\bigr)$ on $\C^\uu_0(\GG/\HH)$. We shall denote it by $\delta^\uu_{\GG/\HH}$. Let us define $\C_0(\GG/\HH)$ as the image under the reducing morphism $\Lambda_\GG$:
\begin{equation}\label{GG0}
\C_0(\GG/\HH)=\Lambda_{\GG}\bigl(\C_0^\uu(\GG/\HH)\bigr).
\end{equation}
(in other words \eqref{GG0} defines the quantum space $\GG/\HH$). Note that
\[
\begin{split}
\bigl[\Delta_{\GG}\bigl(\C_0(\GG/\HH)\bigr)&\bigl(\C_0(\GG)\tens\I\bigr)\bigr]\\
&=
\bigl[(\Lambda_{\GG}\tens\Lambda_{\GG})\bigl(\Delta^\uu_{\GG/\HH}\bigl(\C^\uu_0(\GG/\HH)\bigr)
\bigl(\C^\uu_0(\GG)\tens\I\bigr)\bigr)\bigr]\\
&=\C_0(\GG)\tens\C_0(\GG/\HH).
\end{split}
\]
This shows that $\delta^\uu_{\GG/\HH}$ descends (from universal to reduced level) to a continuous action $\delta_{\GG/\HH}$ of $\GG$ on $\GG/\HH$. By Proposition
\ref{cocomp} $\GG/\HH$ becomes an embeddable quantum homogeneous space with $\Linf(\GG/\HH)$ defined as the strong closure of $\C_0(\GG/\HH)$ in $\C_0(\GG)$ and the \cst-version $\C_0(\GG/\HH)$. Actually, it is not difficult to see that it is of quotient type. By its very definition $\GG/\HH$ is co-compact.
\end{enumerate}
\end{example}

The name ``co-compact'' used in Definition \ref{DefCC} is justified by the following theorem:

\begin{theorem}\label{ccthm}
Let $\GG$ be a locally compact quantum group and let $\XX$ be an embeddable quantum homogeneous space of quotient type related to a closed quantum subgroup $\HH$ of $\GG$. Then
\begin{enumerate}
\item\label{ccthm1} if $\HH$ is compact then $\XX$ is co-compact,
\item\label{ccthm2} if there exists a non-zero $x\in\C_0(\XX)\cap\C_0(\GG)$ then $\HH$ is compact.
\end{enumerate}
\end{theorem}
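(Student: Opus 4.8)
\textbf{Proof plan for Theorem \ref{ccthm}.}

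The plan is to reduce both statements to the relationship between $\C_0(\XX)=\C_0(\GG/\HH)$ and compactness of $\HH$, exploiting the quotient type hypothesis $\Linf(\XX)=\Linf(\GG/\HH)$ together with the co-compactness criterion $\C_0(\XX)\subset\C_0(\GG)$ from Definition \ref{DefCC}. For part \eqref{ccthm1}, when $\HH$ is compact I would invoke the construction of Example \ref{ccEx}\eqref{ccEx2}: there the quotient $\GG/\HH$ by a compact quantum subgroup is built so that $\C_0(\GG/\HH)=\Lambda_\GG\bigl(\C^\uu_0(\GG/\HH)\bigr)$ sits inside $\C_0(\GG)$ by construction, and it was already observed there that this object is of quotient type. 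By the uniqueness of the \cst-version (Theorem \ref{uniqC0X}) the \cst-algebra $\C_0(\XX)$ attached to the quotient type space $\XX$ associated with $\HH$ must coincide (via the equivariant isomorphism extending to the \wst-level) with this $\C_0(\GG/\HH)$. Hence $\C_0(\XX)\subset\C_0(\GG)$, which is exactly co-compactness.

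For part \eqref{ccthm2}, the contrapositive is the cleaner route: assuming $\HH$ is \emph{not} compact, I would show $\C_0(\XX)\cap\C_0(\GG)=\{0\}$. The key tool is Proposition \ref{morfizm}, which says the embedding $\C_0(\XX)\hookrightarrow\M\bigl(\C_0(\GG)\bigr)$ is a morphism, so elements of $\C_0(\XX)$ are genuine multipliers of $\C_0(\GG)$. A nonzero $x\in\C_0(\XX)\cap\C_0(\GG)$ would be an element of $\C_0(\GG)$ that is also constant on right $\HH$-cosets (in the quantum sense, i.e.~fixed by the action of $\HH$ used to define $\Linf(\GG/\HH)$). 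The intuition to make rigorous is the classical fact that a nonzero function vanishing at infinity on $G$ which is right $H$-invariant forces $H$ to be compact, since such a function descends to a nonzero element of $\C_0(G/H)$ whose ``constancy along cosets'' means each coset must be contained in a compact set where the function is bounded below. The quantum analog should follow from the Podle\'s condition for $\delta_{\GG/\HH}$ together with the characterization of $\C_0(\GG/\HH)$ in terms of fixed points under the $\HH$-action.

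Concretely I would argue as follows. Write $\alpha$ for the action of $\HH$ on $\Linf(\GG)$ whose fixed-point algebra is $\Linf(\GG/\HH)$, realized via $(\gamma\tens\id)(W^\HH)$ as in the proof of Lemma \ref{charQT}. A nonzero $x\in\C_0(\XX)\cap\C_0(\GG)$ is then a compact-norm element of $\C_0(\GG)$ fixed by $\alpha$. Testing $\alpha(x)=x\tens\I$ against a faithful invariant weight and using that $x\in\C_0(\GG)$ supplies integrability of the $\HH$-orbit, which can only be reconciled with invariance if the Haar weight of $\HH$ is finite, i.e.~$\HH$ is compact. Alternatively, and perhaps more transparently, I would pass through the universal picture: lift $x$ to $\C^\uu_0(\GG/\HH)\subset\C^\uu_0(\GG)$, apply a faithful state built from $\pi\colon\C^\uu_0(\GG)\to\C^\uu_0(\HH)$, and extract a nonzero invariant element whose existence in $\C_0(\HH)$ forces the counit to be ``square-integrable'', hence $\HH$ compact.

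The main obstacle I anticipate is part \eqref{ccthm2}: translating the classical ``vanishing-at-infinity plus coset-invariance implies compact fiber'' argument into the quantum setting without a pointwise notion of cosets. The crux is to show that a nonzero $\alpha$-fixed element of $\C_0(\GG)$ cannot exist when $\HH$ has infinite Haar weight. I expect the cleanest resolution to run through the Haar weight $\psi_\HH$ of $\HH$ and the invariance identity $(\id\tens\psi_\HH)\circ\alpha(x^*x)=\psi_\HH(\I)\,x^*x$: if $\HH$ is non-compact then $\psi_\HH(\I)=\infty$, while the left-hand side is a well-defined element of $\C_0(\GG)^+$ for $x\in\C_0(\GG)$ (using that $\alpha(x)\in\M(\cK(\Ltwo)\tens\C_0(\HH))$ after a suitable strictness argument parallel to Proposition \ref{cvhProp}), forcing $x^*x=0$. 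Making this invariance relation and the finiteness of the left side precise—i.e.~verifying that $\alpha$ restricted to $\C_0(\GG)$ lands in the multiplier algebra so that the partial weight $(\id\tens\psi_\HH)$ makes sense on $x^*x$—is the technical heart of the argument.
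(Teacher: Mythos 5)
Your part \eqref{ccthm1} is fine and is essentially the paper's own argument: the paper simply points back to Example \ref{ccEx}\eqref{ccEx2}, where $\C_0(\GG/\HH)=\Lambda_\GG\bigl(\C_0^\uu(\GG/\HH)\bigr)\subset\C_0(\GG)$ by construction, and the identification with $\C_0(\XX)$ is exactly the uniqueness statement of Theorem \ref{uniqC0X}. For part \eqref{ccthm2}, however, your main route has a genuine gap, and it sits exactly at the point you flag as the ``technical heart''. You need $(\id\tens\psi_\HH)\alpha(x^*x)$ to be a well-defined (finite) element of $\C_0(\GG)^+$ for $x\in\C_0(\GG)$, and you propose to get this from $\alpha(x)\in\M\bigl(\cK(\Ltwo(\GG))\tens\C_0(\HH)\bigr)$. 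But membership in a multiplier algebra gives no control whatsoever over an unbounded weight applied to one leg, and the claim is already false classically: take $G=H=\mathbb{R}$ and $x(t)=(1+t^2)^{-1/4}$; then $x\in\C_0(G)$ while $\int_H|x(gh)|^2\,dh=\infty$ for every $g$. Worse, for the element you actually care about --- an $\alpha$-invariant one --- the identity $\alpha(x^*x)=x^*x\tens\I$ makes finiteness of the left-hand side \emph{literally equivalent} to the conclusion you want (namely $x=0$ when $\psi_\HH(\I)=\infty$), so there is no non-circular way to supply it. The same defect undermines the phrase ``$x\in\C_0(\GG)$ supplies integrability of the $\HH$-orbit'': vanishing at infinity does not give integrability along cosets, classically or quantumly. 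Note also that your correct classical intuition (a nonzero $\C_0$-function constant on cosets traps each coset inside a compact level set) is a statement about level sets, not about integrability, so the weight-based formalization does not actually encode it.

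The paper's proof of \eqref{ccthm2} avoids weights entirely and is the faithful quantum translation of that level-set picture. Since $\HH$ is a closed quantum subgroup in the sense of Vaes, the action $\alpha\colon\Linf(\GG)\to\Linf(\GG)\vtens\Linf(\HH)$ with $\Linf(\GG/\HH)=\Linf(\GG)^\alpha$ is determined by $(\id\tens\alpha)(W)=W_{12}V_{13}$, where $V$ is the bicharacter of the inclusion, and on $\C_0(\GG)$ it coincides with the right quantum homomorphism associated with $V$. By \cite[Theorems 3.5 \& 3.6(3)]{subgroups} one then has $(y\tens\I)\alpha(x)\in\C_0(\GG)\tens\C_0(\HH)$ for $x,y\in\C_0(\GG)$, so every slice $(\omega\tens\id)\alpha(x)$ with $\omega\in\C_0(\GG)^*$ lies in $\C_0(\HH)$. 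Combining this with the invariance $\alpha(x)=x\tens\I$ and choosing $\omega$ with $\omega(x)=1$ gives $\I\in\C_0(\HH)$, i.e.~$\C_0(\HH)$ is unital and $\HH$ is compact. This slice argument --- that the ``orbit map'' of an element of $\C_0(\GG)$ has slices landing in $\C_0(\HH)$ --- is the key fact your proposal is missing, and it is what replaces your unavailable integrability claim.
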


\begin{proof}
\eqref{ccthm1} has been already explained in Example \ref{ccEx}\eqref{ccEx2} (assume $\XX$ is of quotient type).

For the proof of \eqref{ccthm2} recall from \cite[Section 4]{Vaes} that the quantum homogeneous space $\GG/\HH$ (of quotient type) is defined in such a way that $\C_0(\GG/\HH)$ is a subalgebra of the corresponding von Neumann algebra which we call $\Linf(\GG/\HH)$. This last algebra is the fixed point algebra $\Linf(\GG)^\alpha$, where $\alpha\colon\Linf(\GG)\to\Linf(\GG)\vtens\Linf(\HH)$ is defined by the property that
\[
(\id\tens\alpha)(W)=W_{12}V_{13},
\]
where $V\in\M\bigl(\C_0(\hh{\GG})\tens\C_0(\HH)\bigr)$ is the bicharacter corresponding to the inclusion of $\HH$ as a closed subgroup of $\GG$ (cf.~\cite[Subsection 1.3, Theorems 3.3 \& 3.6]{subgroups}). Therefore for any $x\in\C_0(\GG/\HH)$ we have $\alpha(x)=x\tens\I$. We note that it can be shown that $\alpha$ restricted to $\C_0(\GG/\HH)$ coincides with the \emph{right quantum homomorphism} corresponding to $V$ (\cite{mrw} and \cite[Subsection 1.3]{subgroups}).

So let $x$ be a non-zero element of $\C_0(\GG/\HH)$ such that also $x\in\C_0(\GG)$. Since $(y\tens\I)\alpha(x)\in\C_0(\GG)\tens\C_0(\HH)$ (by \cite[Theorems 3.5 \& 3.6(3)]{subgroups}), it follows that $(\omega\tens\id)\alpha(x)\in\C_0(\HH)$ for any $\omega\in\C_0(\GG)^*$. In particular for $\omega$ such that $\omega(x)=1$ we get $\I\in\C_0(\HH)$, i.e.~$\HH$ is compact.
\end{proof}

\begin{remark}
The proof of Theorem \ref{ccthm}\eqref{ccthm1} shows that if $\XX$ is an embeddable quantum homogeneous space of quotient type such that $\C_0(\XX)\cap\C_0(\GG)\neq\{0\}$ then $\C_0(\XX)\subset\C_0(\GG)$ and $\XX$ is co-compact.
\end{remark}

\section{The non-commutative analog of the quotient by the diagonal subgroup}\label{diagSect}

Let $G$ be a locally compact group. In this section we will study a non-commutative analog of the homogeneous space obtained as a quotient of $G\times{G}$ by the diagonal subgroup. For technical reasons we will work with $G\times{G^\op}$ instead of $G\times{G}$. In this case the diagonal subgroup is embedded via
\[
G\ni{g}\longmapsto(g,g^{-1})\in{G\times{G^\op}}.
\]
The resulting homogeneous space consist of the classes of the form
\[
\bigl[(g_1,g_2)\bigr]=\bigl\{(g_1g,g^{-1}g_2)\st{g\in{G}}\bigr\},\qquad\qquad(g_1,g_2\in{G})
\]
and the space $X$ of classes is homeomorphic to $G$. Indeed, the homeomorphism is provided by mapping a class $\bigl[(g_1,g_2)\bigr]$ to the product $g_1g_2\in{G}$. On the level of algebras of functions we get a natural identification of  $\Delta_G\bigl(\Linf(G)\bigr)\subset\Linf(G)\vtens\Linf(G)$ with the algebra of functions on the homogeneous space $(G\times{G^\op})/G$.

Let now $\GG$ be a locally compact quantum group. It is well known that the analog of the direct product $\GG\times\GG^\op$ of $\GG$ with its opposite quantum group can be defined by putting
\[
\C_0(\GG\times\GG^\op)=\C_0(\GG)\tens\C_0(\GG^\op)
\]
and
\[
\Delta_{\GG\times\GG^\op}=(\id\tens\sigma\tens\id)\comp(\Delta_{\GG}\tens\Delta_{\GG^\op}),
\]
where $\sigma$ is the flip morphism $\C_0(\GG)\tens\C_0(\GG^\op)\to\C_0(\GG^\op)\tens\C_0(\GG)$.

In what follows we shall prove that setting $\Linf(\XX)=\Delta_{\GG}\bigl(\Linf(\GG)\bigr)\subset\Linf(\GG)\vtens\Linf(\GG)$ we define an embeddable quantum homogeneous $\GG\times\GG^\op$-space. This space is the quantum analog of the homogeneous space obtained (in the classical case) by taking the quotient of $\GG\times\GG^\op$ by the diagonal subgroup. We will show in Proposition \ref{GGprop} that $\XX$ is of quotient type if and only if $\GG$ is a classical group. In the course of the proof we will make extensive use of the concept of the \emph{opposite quantum group} $\GG^\op$ (mentioned above) and the \emph{commutant quantum group} $\GG'$. For details we refer the reader to \cite[Section 4]{KVvN}. In particular we will need the facts that $\hh{\GG^\op}=\hh{\GG}'$, $\hh{\GG'}=\hh{\GG}^\op$ and their extended version: $\hh{\GG\times\GG^\op}=\hh{\GG}\times\hh{\GG}'$ (see \cite{KVvN}).

\begin{proposition}\label{propXG}
Let $\GG$ be a locally compact quantum group and let $\Linf(\XX)$ be the image of $\Linf(\GG)$ under $\Delta_\GG$ considered as a map from $\Linf(\GG)$ to $\Linf(\GG)\vtens\Linf(\GG^\op)$. Then:
\begin{enumerate}
\item $\Delta_{\GG\times\GG^\op}\bigl(\Linf(\XX)\bigr)\subset\Linf(\GG\times\GG^\op)\vtens\Linf(\XX)$,
\item $\XX$ is an embeddable quantum homogeneous space with \cst-version $\C_0(\XX)=\Delta_{\GG}\bigl(\C_0(\GG)\bigr)$.
\end{enumerate}
\end{proposition}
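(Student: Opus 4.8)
The plan is to reduce everything to one coassociativity identity. Writing $\sigma$ for the tensor flip and using $\Delta_{\GG^\op}=\sigma\comp\Delta_\GG$ in $\Delta_{\GG\times\GG^\op}=(\id\tens\sigma\tens\id)\comp(\Delta_\GG\tens\Delta_{\GG^\op})$, the first thing I would establish is the operator identity
\begin{equation}\label{diagId}
\Delta_{\GG\times\GG^\op}\comp\Delta_\GG=(\id\tens\id\tens\Delta_\GG)\comp(\id\tens\sigma)\comp(\Delta_\GG\tens\id)\comp\Delta_\GG,
\end{equation}
an equality of normal unital $*$-homomorphisms $\Linf(\GG)\to\Linf(\GG)\vtens\Linf(\GG)\vtens\Linf(\GG)\vtens\Linf(\GG)$. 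Both sides are built from $\Delta_\GG$, $\sigma$ and $\id$, and \eqref{diagId} is just coassociativity $(\Delta_\GG\tens\id)\comp\Delta_\GG=(\id\tens\Delta_\GG)\comp\Delta_\GG$ used twice (one checks it directly, e.g.\ on simple tensors in Sweedler notation). Statement (1) then follows at once: the outer map $(\id\tens\id\tens\Delta_\GG)$ sends the third leg of an element of $\Linf(\GG)^{\vtens3}$ into $\Delta_\GG(\Linf(\GG))=\Linf(\XX)$, while leaving the first two legs in $\Linf(\GG)\vtens\Linf(\GG^\op)=\Linf(\GG\times\GG^\op)$; hence $\Delta_{\GG\times\GG^\op}(\Linf(\XX))\subset\Linf(\GG\times\GG^\op)\vtens\Linf(\XX)$.

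With (1) in hand $\XX$ is an embeddable \wst-quantum $\GG\times\GG^\op$-space: injectivity, normality and coassociativity of $\delta_\XX=\bigl.\Delta_{\GG\times\GG^\op}\bigr|_{\Linf(\XX)}$ are inherited from $\Delta_{\GG\times\GG^\op}$, and ergodicity holds because $\delta_\XX(\xi)=\I\tens\xi$ means $\xi$ is fixed by the comultiplication of the genuine locally compact quantum group $\GG\times\GG^\op$, whose fixed-point space is trivial, so $\xi\in\CC\I$. Turning to (2), the candidate \cst-version is $\C_0(\XX)=\Delta_\GG(\C_0(\GG))$. Since $\bigl.\Delta_\GG\bigr|_{\C_0(\GG)}\in\Mor(\C_0(\GG),\C_0(\GG)\tens\C_0(\GG))$ is an injective morphism, $\Delta_\GG(\C_0(\GG))$ is a \cst-subalgebra of $\M(\C_0(\GG\times\GG^\op))$, and normality of $\Delta_\GG$ makes it strongly dense in $\Linf(\XX)$; this gives conditions (1)--(2) of Definition \ref{DefQHS}. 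Restricting \eqref{diagId} to $\C_0(\GG)$ shows $\delta_\XX(\C_0(\XX))\subset\M(\C_0(\GG\times\GG^\op)\tens\C_0(\XX))$, and together with the cancellation (Podle\'s) identities for $\Delta_\GG$ one checks that $\delta_\YY=\bigl.\delta_\XX\bigr|_{\C_0(\XX)}$ is an injective coassociative morphism satisfying the Podle\'s condition, so $\YY=(\C_0(\XX),\delta_\YY)$ is a \cst-quantum $\GG\times\GG^\op$-space.

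The heart of the proof, and the step I expect to be the main obstacle, is condition (3) of Definition \ref{DefQHS}: that $\delta_\XX$ maps $\Linf(\XX)$ strictly into $\M\bigl(\cK(\Ltwo(\GG\times\GG^\op))\tens\C_0(\XX)\bigr)$ with the \emph{small} algebra $\C_0(\XX)=\Delta_\GG(\C_0(\GG))$ in the second leg. One cannot simply use the Kac--Takesaki operator of $\GG\times\GG^\op$: that only places $\C_0(\GG\times\GG^\op)$ in the second leg, and $\C_0(\XX)\not\subset\C_0(\GG\times\GG^\op)$ (already classically $\Delta_G(\C_0(G))\not\subset\C_0(G\times G)$). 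Proposition \ref{cocomp} does not apply either, since its hypothesis $\C_0(\XX)\subset\C_0(\GG\times\GG^\op)$ fails, and one cannot invoke quotient-type existence because $\XX$ is not of quotient type for non-classical $\GG$ (Proposition \ref{GGprop}). Instead I would split (3) into two parts, in the spirit of Propositions \ref{cvhProp} and \ref{cocomp}. First, the \emph{inclusion} $\delta_\XX(\Linf(\XX))\subset\M\bigl(\cK(\Ltwo(\GG\times\GG^\op))\tens\C_0(\XX)\bigr)$: using \eqref{diagId} and implementing the outer $\Delta_\GG$ through the Kac--Takesaki operator $W\in\M\bigl(\cK(\Ltwo(\GG))\tens\C_0(\GG)\bigr)$ of $\GG$, one computes $\delta_\XX(\Delta_\GG(x))(k\tens\Delta_\GG(c_0))$ for $k\in\cK(\Ltwo(\GG\times\GG^\op))$ and $c_0\in\C_0(\GG)$; the factor $c_0\in\C_0(\GG)$ combines with the copies of $W$ (which obey $W\in\M(\cK\tens\C_0(\GG))$) to pull the third leg into $\C_0(\GG)$, so that after the outer $\Delta_\GG$ the last two legs lie in $\Delta_\GG(\C_0(\GG))=\C_0(\XX)$ while $k$ keeps the first two legs compact. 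Second, \emph{strictness}: writing $\xi=\Delta_\GG(x)=W(x\tens\I)W^*$ and $\delta_\XX(\xi)=\widetilde W(\xi\tens\I)\widetilde W^*$ with the Kac--Takesaki operator $\widetilde W$ of $\GG\times\GG^\op$, one sees that $\delta_\XX(\xi)=V(x\tens\I)V^*$ for a fixed unitary $V$ which confines $x$ to a single tensor leg; for $c\in\cK(\Ltwo(\GG\times\GG^\op))\tens\C_0(\XX)$ the operator $V^*c$ is compact in exactly that leg, so a bounded strong${}^*$-convergent net $x_i$ multiplied against this compact converges in norm (and similarly for adjoints), which yields the strictness. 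The delicate point throughout is the leg-bookkeeping: one must track how the correlation introduced by the inner $\Delta_\GG$ between the first and third legs, combined with the compact $k$ and the multiplier $c_0$, forces the ``$\C_0(\XX)$-leg'' to be genuinely $\Delta_\GG(\C_0(\GG))$ rather than the a priori larger $\C_0(\GG\times\GG^\op)$.
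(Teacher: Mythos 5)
Your proposal is correct and takes essentially the same route as the paper: your coassociativity identity is precisely the paper's key rearrangement of $\Delta_{\GG\times\GG^\op}\comp\Delta_{\GG}$ (the paper records it as $(\sigma\tens\id\tens\id)\comp(\id\tens\Delta_{\GG}\tens\id)\comp(\id\tens\Delta_{\GG})\comp\Delta_{\GG^\op}$ and also derives your form), the Podle\'s condition is verified from the same cancellation identities for $\Delta_{\GG}$ and $\Delta_{\GG^\op}$, and condition (3) of Definition \ref{DefQHS} is established exactly as you outline --- writing $\delta_\XX\bigl(\Delta_{\GG}(x_i)\bigr)$ as conjugation of $x_i\tens\I\tens\I\tens\I$ by the fixed unitary $\Sigma_{12}W_{34}W_{23}\Sigma_{12}W_{12}$ (your $V$), using $W\in\M\bigl(\cK(\Ltwo(\GG))\tens\C_0(\GG)\bigr)$ to pull the relevant leg into $\C_0(\GG)$ before the outer $\Delta_{\GG}$ acts, and then invoking the bounded strong${}^*$-convergent net times compact argument. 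The only cosmetic difference is that the paper keeps $x_i$ in two legs, as $\Delta_{\GG^\op}(x_i)\tens\I$, and phrases the last step through the map $\sigma\tens\Delta_{\GG}$, rather than confining $x_i$ to a single leg as you do; the estimates are identical.
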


\begin{proof}
Let
\begin{equation}\label{DelXX}
\C_0(\XX)=\Delta_{\GG}\bigl(\C_0(\GG)\bigr)\subset\M\bigl(\C_0(\GG\times\GG^\op)\bigr)=\M\bigl(\C_0(\GG)\tens\C_0(\GG^\op)\bigr)
\end{equation}
(it is clear that $\Linf(\XX)$ as defined in the statement of the theorem is the strong closure of so defined $\C_0(\XX)$). We will first show that
\[
\Bigl[\bigl(\C_0(\GG\times\GG^\op)\tens\I_{\C_0(\XX)}\bigr)\Delta_{\GG\times\GG^\op}\bigl(\C_0(\XX)\bigr)\Bigr]=
\C_0(\GG\times\GG^\op)\tens\C_0(\XX)
\]
Indeed
\[
 \begin{split}
  \Bigl[\bigl(\C_0(\GG&\times\GG^\op)\tens\I_{\C_0(\XX)}\bigr)\Delta_{\GG\times\GG^\op}\bigl(\C_0(\XX)\bigr)\Bigr]\\
  &=\Bigl[\bigl(\C_0(\GG\times\GG^\op)\tens\I_{\C_0(\GG)}\tens\I_{\C_0(\GG^\op)}\bigr)\Delta_{\GG\times\GG^\op}
  \bigl(\Delta_{\GG}(\C_0(\GG))\bigr)\Bigr]\\
  &=\Bigl[\bigl(\C_0(\GG)\tens\C_0(\GG^\op)\tens\I\tens\I\bigr)
  \bigl((\id\tens\sigma\tens\id)(\Delta_{\GG}\tens\Delta_{\GG^\op})\Delta_{\GG}(\C_0(\GG))\bigr)\Bigr]\\
  &=\Bigl[\bigl(\C_0(\GG)\tens\C_0(\GG^\op)\tens\I\tens\I\bigr)
  \bigl((\id\tens\sigma\tens\id)(\id\tens\id\tens\Delta_{\GG^\op})(\id\tens\Delta_{\GG})\Delta_{\GG}(\C_0(\GG))\bigr)\Bigr]\\
  &=\C_0(\GG)\tens\Bigl[
  \bigl(\C_0(\GG^\op)\tens\I\tens\I\bigr)\bigl((\sigma\tens\id)(\id\tens\Delta_{\GG^\op})\Delta_{\GG}(\C_0(\GG))\bigr)\Bigr]\\
  &=\C_0(\GG)\tens\Bigl[
  \bigl(\C_0(\GG^\op)\tens\I\tens\I\bigr)\bigl((\sigma\tens\id)(\id\tens\sigma)(\Delta_{\GG}\tens\id)\Delta_{\GG}(\C_0(\GG))\bigr)\Bigr]\\
  &=\C_0(\GG)\tens\Bigl[
  \bigl(\C_0(\GG^\op)\tens\I\tens\I\bigr)\bigl((\id\tens\Delta_{\GG})\Delta_{\GG^\op}(\C_0(\GG))\bigr)\Bigr]\\
  &=\C_0(\GG)\tens\C_0(\GG^\op)\tens\Delta_{\GG}\bigl(\C_0(\GG)\bigr)\\
  &=\C_0(\GG\times\GG^\op)\tens\C_0(\XX).
 \end{split}
\]
In the fourth equality above we used the fact that
\[
\bigl[\bigl(\C_0(\GG)\tens\I\bigr)\Delta_{\GG}\bigl(\C_0(\GG)\bigr)\bigr]=\C_0(\GG)\tens\C_0(\GG)
\]
while in the seventh one we used
\[
\bigl[\bigl(\C_0(\GG^\op)\tens\I\bigr)\Delta_{\GG^\op}\bigl(\C_0(\GG)\bigr)\bigr]=\C_0(\GG^\op)\tens\C_0(\GG).
\]
Thus $\XX$ defined via \eqref{DelXX} is a \cst-quantum $\GG\times\GG^\op$-space. Moreover, taking strong closure we put on $\XX$ the structure of an ergodic \wst-quantum $\GG\times\GG^\op$-space (which is moreover embeddable). We will show that this way $\XX$ becomes an embeddable quantum homogeneous space. To this end we need to establish that
\[
\Delta_{\GG\times\GG^\op}\bigl(\Linf(\XX)\bigr)\subset
\M\bigl(\cK(\Ltwo(\GG\times\GG^\op))\tens\C_0(\XX)\bigr)
\]
and the map $\bigl.\Delta_{\GG\times\GG^\op}\bigr|_{\Linf(\XX)}\colon\Linf(\XX)\to\M\bigl(\cK\bigl(\Ltwo(\GG\times\GG^\op)\bigr)\tens\C_0(\XX)\bigr)$ is strict. Let us denote this map by $\delta_\XX$.

Let us first observe that
\begin{equation}\label{sigma12}
 \begin{split}
  \Delta_{\GG\times\GG^\op}\comp\Delta_{\GG}
  &=(\id\tens\sigma\tens\id)\comp(\id\tens\id\tens\sigma)\comp(\Delta_{\GG}\tens\Delta_{\GG})\comp\Delta_{\GG}\\
  &=(\id\tens\sigma\tens\id)\comp(\id\tens\id\tens\sigma)\comp(\Delta_{\GG}\tens\id\tens\id)
  \comp(\Delta_{\GG}\tens\id)\comp\Delta_{\GG}\\
  &=(\sigma\tens\id\tens\id)\comp(\id\tens\Delta_{\GG}\tens\id)\comp(\id\tens\Delta_{\GG})\comp\sigma\comp\Delta_{\GG}\\
  &=(\sigma\tens\id\tens\id)\comp(\id\tens\Delta_{\GG}\tens\id)\comp(\id\tens\Delta_{\GG})\comp\Delta_{\GG^\op}\\
 \end{split}
\end{equation}
Take now $k_1\tens{k_2}\tens{\Delta_{\GG}(y)}\in\cK\bigl(\Ltwo(\GG)\bigr)\tens\cK\bigl(\Ltwo(\GG^\op)\bigr)\tens\C_0(\XX)$ and let $(y_i)_{i\in\cI}=\bigl(\Delta_{\GG}(x_i)\bigr)_{i\in\cI}$ be a norm-bounded strongly convergent net with $x_i\in\Linf(\GG)$. By \eqref{sigma12} we have
\[
 \begin{split}
  \delta_\XX(y_i)&=(\sigma\tens\id\tens\id)(\id\tens\id\tens\Delta_{\GG})(\id\tens\Delta_{\GG})\Delta_{\GG^\op}(x_i)\\
  &=\Sigma_{12}W_{34}W_{23}\Sigma_{12}W_{12}(x_i\tens\I\tens\I\tens\I)W_{12}^*\Sigma_{12}W_{23}^*W_{34}^*\Sigma_{12}.
 \end{split}
\]
It follows that
\[
 \begin{split}
  \delta_\XX(y_i)&\cdot\bigl(k_1\tens{k_2}\tens\Delta_{\GG}(y)\bigr)=\\
  &=\Sigma_{12}W_{34}W_{23}\Sigma_{12}W_{12}(x_i\tens\I\tens\I\tens\I)W_{12}^*\Sigma_{12}W_{23}^*W_{34}^*\Sigma_{12}
  \cdot{W_{34}(k_1\tens{k_2}\tens{y}\tens\I)W_{34}^*}\\
  &=\Sigma_{12}W_{34}W_{23}\Sigma_{12}W_{12}(x_i\tens\I\tens\I\tens\I)W_{12}^*\Sigma_{12}W_{23}^*
  (k_2\tens{k_1}\tens{y}\tens\I)W_{34}^*\Sigma_{12}\\
  &=(\sigma\tens\id\tens\id)(\id\tens\id\tens\Delta_{\GG})\Bigl(
  W_{23}\bigl(\Delta_{\GG^\op}(x_i)\tens\I\bigr)W_{23}^*(k_2\tens{k_1}\tens{y})\Bigr)\\
  &=(\sigma\tens\Delta_{\GG})\Bigl(
  W_{23}\bigl(\Delta_{\GG^\op}(x_i)\tens\I\bigr)W_{23}^*(k_2\tens{k_1}\tens{y})\Bigr).
 \end{split}
\]
Consider now the net
\begin{equation}\label{nnet}
 \Bigl(\bigl(\Delta_{\GG^\op}(x_i)\tens\I\bigr)W_{23}^*(k_2\tens{k_1}\tens{y})\Bigr)_{i\in\cI}.
\end{equation}
We claim that it is norm-convergent. Indeed, since $W\in\M\bigl(\cK(\Ltwo(\GG))\tens\C_0(\GG)\bigr)$, we have $W_{23}^*(k_2\tens{k_1}\tens{y})\in\cK\bigl(\Ltwo(\GG\times\GG^\op)\bigr)\tens\C_0(\GG)$, while the net
\[
\bigl(\Delta_{\GG^\op}(x_i)\tens\I\bigr)_{i\in\cI}
=\bigl((\sigma\tens\id)(y_i\tens\I)\bigr)_{i\in\cI}.
\]
is bounded and strongly convergent. It follows that \eqref{nnet} is norm-convergent and since the norm-convergence of the net $\delta_\XX(y_i)^*\cdot\bigl(k_1\tens{k_2}\tens\Delta_{\GG}(y)\bigr)$ is proved similarly. Thus we obtain strictness of $\delta_\XX$.
\end{proof}

In order to find the co-dual of $\XX$ we prove the following lemma:

\begin{lemma}\label{Dely}
Let $\GG$ be a locally compact quantum group and $W\in\Linf(\hh{\GG})\vtens\Linf(\GG)$ its Kac-Takesaki operator. Let $x\in\Linf(\GG)\vtens\Linf(\GG)$ and $y\in\B\bigl(\Ltwo(\GG)\bigr)$ be such that $W(y\tens\I)W^*=x$. Then $y\in\Linf(\GG)$ and $x=\Delta_\GG(y)$
\end{lemma}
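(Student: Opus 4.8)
The plan is to reduce the whole statement to the single assertion that $y\in\Linf(\GG)$. Indeed, once this is established, the defining relation $\Delta_{\GG}(y)=W(y\tens\I)W^*$ for $y\in\Linf(\GG)$ (already used in Proposition \ref{morfizm}) together with the hypothesis $x=W(y\tens\I)W^*$ gives $x=\Delta_{\GG}(y)$ for free. To prove $y\in\Linf(\GG)$ I would invoke the Tomita--Takesaki characterization $\Linf(\GG)=\bigl(\Linf(\GG)'\bigr)'$ and verify that $y$ commutes with every element of the commutant $\Linf(\GG)'$.

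First I would rewrite the hypothesis as $y\tens\I=W^*xW$ and exploit that, because $x\in\Linf(\GG)\vtens\Linf(\GG)$, the operator $x$ commutes with $\Linf(\GG)'\tens\CC\I$. Fixing $c\in\Linf(\GG)'$, the inclusion $c\tens\I\in\bigl(\Linf(\GG)\vtens\Linf(\GG)\bigr)'$ gives $[x,c\tens\I]=0$, and conjugating by $W$ turns this into $[\,y\tens\I,\ W^*(c\tens\I)W\,]=0$. The crucial point is then to identify $W^*(c\tens\I)W$. Writing $c=JaJ$ with $a\in\Linf(\GG)$ (recall $J\Linf(\GG)J=\Linf(\GG)'$) and using the standard relation $W=(J\tens\hh{J})W^*(J\tens\hh{J})$ between the Kac--Takesaki operator and the modular conjugations (cf.~\cite[Proposition 2.15]{KVvN}, \cite[Section 5]{KV}; this is the first-leg analogue of the identity $U(\I\tens\hh{J}y\hh{J})U^*=(J\tens\hh{J})\Delta_{\hh{\GG}}(y)(J\tens\hh{J})$ computed in the proof of Theorem \ref{ddddT}), one obtains $W^*(c\tens\I)W=(J\tens\hh{J})\Delta_{\GG}(a)(J\tens\hh{J})$. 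Since $\hh{J}\Linf(\GG)\hh{J}=\Linf(\GG)$ (which is exactly $R(x)=\hh{J}x^*\hh{J}$), this element lies in $\Linf(\GG)'\vtens\Linf(\GG)$.

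With this in hand I would slice the second leg. For any normal functional $\psi$, applying $\id\tens\psi$ to $[\,y\tens\I,\ W^*(c\tens\I)W\,]=0$ yields $[\,y,\ (\id\tens\psi)\bigl(W^*(c\tens\I)W\bigr)\,]=0$, and the computed form gives $(\id\tens\psi)\bigl(W^*(c\tens\I)W\bigr)=J\,(\id\tens\tilde\psi)\bigl(\Delta_{\GG}(a)\bigr)\,J$ with $\tilde\psi=\psi(\hh{J}\,\cdot\,\hh{J})$. As $a$ ranges over $\Linf(\GG)$ and $\psi$ over the normal functionals, the elements $(\id\tens\tilde\psi)\bigl(\Delta_{\GG}(a)\bigr)$ are $\sigma$-weakly total in $\Linf(\GG)$ (this is the second-leg slicing form of the right Podle\'s density condition $\bigl[\Delta_{\GG}(\Linf(\GG))(\Linf(\GG)\tens\I)\bigr]=\Linf(\GG)\vtens\Linf(\GG)$), so their $J$-conjugates generate $\Linf(\GG)'$. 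Hence $y$ commutes with all of $\Linf(\GG)'$, i.e.~$y\in\Linf(\GG)$, and the proof concludes as above.

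I expect the main obstacle to be the middle step: correctly pinning down $W^*(c\tens\I)W$ through the modular-conjugation relations for $W$ (in particular the precise placement of $J$ and $\hh{J}$, so that the result is $(J\tens\hh{J})\Delta_{\GG}(a)(J\tens\hh{J})$ landing in $\Linf(\GG)'\vtens\Linf(\GG)$), together with the density of the coproduct slices. The surrounding commutant bookkeeping, the transport of commutation through $W$, and the final slicing are formal once these two ingredients are in place.
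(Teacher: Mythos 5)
Your proof is correct, but it follows a genuinely different route from the paper's. The paper reduces the lemma to showing that $x$ lies in the image of $\Delta_{\GG}$: it verifies $(\Delta_{\GG}\tens\id)(x)=(\id\tens\Delta_{\GG})(x)$ by a short pentagon-equation computation ($W_{12}W_{13}=W_{23}W_{12}W_{23}^*$ applied to $x_{13}=W_{13}(y\tens\I\tens\I)W_{13}^*$), and then invokes \cite[Theorem 2.4.7]{VaesPHD} to produce $z\in\Linf(\GG)$ with $\Delta_{\GG}(z)=x$, whence $y=z$ by injectivity of conjugation by $W$. You instead prove $y\in\Linf(\GG)$ directly by a double-commutant argument: transporting the commutation $[x,\,c\tens\I]=0$ (for $c\in\Linf(\GG)'$) through $W$, identifying $W^*(JaJ\tens\I)W=(J\tens\hh{J})\Delta_{\GG}(a)(J\tens\hh{J})$ via the identity $(J\tens\hh{J})W(J\tens\hh{J})=W^*$ (which the paper itself uses later, in Proposition \ref{GGprop}, citing \cite[Theorem 5.11]{mnw}), and then slicing. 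What the paper's proof buys is brevity, at the price of citing the nontrivial ergodicity-type theorem from Vaes's thesis; what yours buys is self-containedness within the modular/commutant toolkit already deployed elsewhere in the paper. Two cosmetic points you should repair: first, $\tilde\psi=\psi(\hh{J}\,\cdot\,\hh{J})$ is conjugate-linear (conjugation by an antiunitary is antilinear), so to get a genuine normal functional you should take $\omega(t)=\overline{\psi(\hh{J}t\hh{J})}$, which gives $(\id\tens\psi)\bigl((J\tens\hh{J})\Delta_{\GG}(a)(J\tens\hh{J})\bigr)=J\bigl[(\id\tens\omega)\Delta_{\GG}(a)\bigr]J$; second, totality of second-leg slices $(\id\tens\omega)\Delta_{\GG}(a)$ in $\Linf(\GG)$ follows from the density of $\bigl[\Delta_{\GG}(\Linf(\GG))(\I\tens\Linf(\GG))\bigr]$ (slicing $\Delta_{\GG}(a)(\I\tens b)$ just replaces $\omega$ by $\omega(\,\cdot\,b)$), not from the $\bigl(\Linf(\GG)\tens\I\bigr)$ version you quote, which only yields density of products $\bigl[(\id\tens\omega)\Delta_{\GG}(a)\bigr]b$. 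Both density conditions hold, so the argument stands once the citation is adjusted.
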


\begin{proof}
It is enough to show that there exists $z\in\Linf(\GG)$ such that $\Delta_{\GG}(z)=x$. Indeed, our assumptions then imply that $W(y\tens\I)W^*=x=W(z\tens\I)W^*$ and it follows that $y=z\in\Linf(\GG)$. In order to prove the existence of such a $z$ it is enough to show that $(\Delta_{\GG}\tens\id)(x)=(\id\tens\Delta_{\GG})(x)$ (by \cite[Theorem 2.4.7]{VaesPHD} applied to the action of $\GG$ on itself). We compute
\[
\begin{split}
(\Delta_{\GG}\tens\id)(x)&=W_{12}x_{13}W_{12}^*\\
&=W_{12}W_{13}(y\tens\I\tens\I)W_{13}^*W_{12}^*\\
&=W_{23}W_{12}W_{23}^*(y\tens\I\tens\I)W_{23}W_{12}^*W_{23}^*\\
&=W_{23}W_{12}(y\tens\I\tens\I)W_{12}^*W_{23}^*.
\end{split}
\]
On the other hand
\[
(\id\tens\Delta_{\GG})(x)=W_{23}x_{12}W_{23}^*=W_{23}W_{12}(y\tens\I\tens\I)W_{12}^*W_{23}^*.
\]
which ends the proof.
\end{proof}

\begin{proposition}\label{GGprop}
$\XX$ be a embeddable \wst-quantum $\GG$ space space defined above. Then
\begin{enumerate}
\item\label{GGprop1} we have $\Linf(\dd{\XX})
=(J\tens\hh{J})\Delta_{\hh{\GG}^\op}(\Linf(\hh{\GG}))(J\tens\hh{J})\subset\Linf(\hh{\GG})\vtens\Linf(\hh{\GG})'$.

\item\label{GGprop2} $\XX$ is of quotient type if and only if $\GG$ is classical.
\end{enumerate}
\end{proposition}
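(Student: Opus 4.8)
The plan is to establish \eqref{GGprop1} by a direct commutant computation and then to deduce \eqref{GGprop2} from it. For \eqref{GGprop1} I read the defining formula \eqref{corem} of the co-dual over the dual group $\hh{\GG\times\GG^\op}=\hh{\GG}\times\hh{\GG}'$: since $\Linf(\hh{\GG}')=\Linf(\hh{\GG})'$, the algebra $\Linf(\dd{\XX})$ is the relative commutant of $\Linf(\XX)=\Delta_{\GG}(\Linf(\GG))$ inside $\Linf(\hh{\GG})\vtens\Linf(\hh{\GG})'$. I make this transparent by conjugating the whole problem first by the antiunitary $J\tens\hh{J}$ and then by the flip $\Sigma$. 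Using $J\Linf(\hh{\GG})J=\Linf(\hh{\GG})$, $\hh{J}\Linf(\hh{\GG})'\hh{J}=\Linf(\hh{\GG})$ and $\Sigma(\Linf(\hh{\GG})\vtens\Linf(\hh{\GG}))\Sigma=\Linf(\hh{\GG})\vtens\Linf(\hh{\GG})$, this moves the ambient algebra to $\Linf(\hh{\GG})\vtens\Linf(\hh{\GG})$. The invariance of the Kac--Takesaki operator under the unitary antipodes, $(J\tens\hh{J})W(J\tens\hh{J})=W^*$, then turns $\Delta_{\GG}(x)=W(x\tens\I)W^*$ into $\hh{W}(\I\tens JxJ)\hh{W}^*$ with $JxJ\in\Linf(\GG)'$, while it turns the candidate element $(J\tens\hh{J})\Delta_{\hh{\GG}^\op}(y)(J\tens\hh{J})$ into $\Delta_{\hh{\GG}}(y)=\hh{W}(y\tens\I)\hh{W}^*$. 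The inclusion $\supseteq$ is now immediate, because after these conjugations the two operators $\hh{W}(y\tens\I)\hh{W}^*$ and $\hh{W}(\I\tens JxJ)\hh{W}^*$ commute for the trivial reason that $y\tens\I$ and $\I\tens JxJ$ act on disjoint legs; this simultaneously shows that the right-hand side of \eqref{GGprop1} lies in $\Linf(\hh{\GG})\vtens\Linf(\hh{\GG})'$.

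For the reverse inclusion I reduce, by the same two conjugations, to proving that the relative commutant of $\{\hh{W}(\I\tens x')\hh{W}^*\st x'\in\Linf(\GG)'\}$ inside $\Linf(\hh{\GG})\vtens\Linf(\hh{\GG})$ equals $\Delta_{\hh{\GG}}(\Linf(\hh{\GG}))$. Writing a commuting element as $P=\hh{W}Q\hh{W}^*$ with $P\in\Linf(\hh{\GG})\vtens\Linf(\hh{\GG})$, commutation with $\I\tens\Linf(\GG)'$ forces $Q\in\B(\Ltwo(\GG))\vtens\Linf(\GG)$; I must upgrade this to $Q\in\B(\Ltwo(\GG))\vtens\CC\I$, whence $Q=q\tens\I$ and $P=\Delta_{\hh{\GG}}(q)$ by the characterization of the range of $\Delta_{\hh{\GG}}$ in Lemma \ref{Dely} (applied to $\hh{\GG}$). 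Equivalently one may realize $\Linf(\dd{\XX})$ as the commutant of the crossed product of Remark \ref{cpRem} and identify it with a fixed-point algebra exactly as in the proof of Theorem \ref{ddddT}. I expect this reverse inclusion to be the main obstacle in \eqref{GGprop1}: the difficulty is to control the interaction of the two non-commuting algebras $\Linf(\hh{\GG})'$ and $\Linf(\GG)$ sitting in different legs.

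For \eqref{GGprop2} the easy implication is that, if $\GG$ is classical, then $\GG\times\GG^\op$ is a genuine locally compact group and, by Theorem \ref{classX}, every embeddable \wst-quantum $\GG\times\GG^\op$-space is of quotient type; concretely $\XX=(G\times G^\op)/\Delta G$ for the diagonal subgroup $\Delta G$. For the converse I combine Lemma \ref{charQT} with the bi-co-duality of Theorem \ref{ddddT}: $\XX$ is of quotient type if and only if $\Linf(\dd{\XX})$ is the image of $\Linf(\hh{\HH})$ for a closed quantum subgroup $\HH$ of $\GG\times\GG^\op$, and in that case the coideal $\Linf(\dd{\XX})$ must be invariant, $\Delta_{\hh{\GG}\times\hh{\GG}'}\bigl(\Linf(\dd{\XX})\bigr)\subset\Linf(\dd{\XX})\vtens\Linf(\dd{\XX})$. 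Since by \eqref{GGprop1} we have $\Linf(\dd{\XX})\cong\Linf(\hh{\GG})$ embedded as a twisted diagonal, $\HH$ must be a diagonal copy of $\GG$ inside $\GG\times\GG^\op$.

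It then remains to show that this diagonal is a closed quantum subgroup precisely when $\GG$ is classical. Passing to the universal level and using that a closed quantum subgroup in the sense of Vaes is one in the sense of Woronowicz (\cite{subgroups}), the existence of the diagonal subgroup provides a surjective morphism $\C_0^\uu(\GG)\tens\C_0^\uu(\GG^\op)\to\M(\C_0^\uu(\GG))$ which, after the identification $\C_0^\uu(\GG^\op)=\C_0^\uu(\GG)$, is essentially the multiplication $a\tens b\mapsto a\,R(b)$ of the two legs. Expanding the homomorphism property on a product of simple tensors shows that such a map is multiplicative if and only if $\C_0(\GG)$ is commutative, i.e.\ if and only if $\GG$ is classical; this is the quantum counterpart of the fact that the diagonal of a group is a subgroup only because multiplication ``is a homomorphism''. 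I expect the technical heart of \eqref{GGprop2} to be the bookkeeping that turns the abstract invariance of the diagonal coideal $\Linf(\dd{\XX})$ into this concrete commutativity condition --- equivalently, the verification that the sub-Hopf-algebra condition on $\Linf(\dd{\XX})$ forces cocommutativity of $\hh{\GG}$.
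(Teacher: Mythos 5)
Your part \eqref{GGprop1} is essentially right and, once finished, is a clean variant of the paper's own argument: the paper conjugates the commutation relation by $W$, kills the \emph{first} leg using $\Linf(\hh{\GG})\cap\Linf(\GG)'=\CC\I$, and then applies Lemma \ref{Dely}; you conjugate by $J\tens\hh{J}$ and $\Sigma$ first and then by $\hh{W}$. But the step you yourself flag as ``the main obstacle'' --- upgrading $Q=\hh{W}^*P\hh{W}\in\B\bigl(\Ltwo(\GG)\bigr)\vtens\Linf(\GG)$ to $Q\in\B\bigl(\Ltwo(\GG)\bigr)\vtens\CC\I$ --- is left unproven, and it is exactly where a relative-commutant triviality must enter. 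The fix is one line: since $P\in\Linf(\hh{\GG})\vtens\Linf(\hh{\GG})$ and $\hh{W}\in\Linf(\GG)\vtens\Linf(\hh{\GG})$, the element $Q$ also commutes with $\CC\I\tens\Linf(\hh{\GG})'$, so $Q\in\bigl(\B(\Ltwo(\GG))\vtens\Linf(\GG)\bigr)\cap\bigl(\B(\Ltwo(\GG))\vtens\Linf(\hh{\GG})\bigr)=\B\bigl(\Ltwo(\GG)\bigr)\vtens\bigl(\Linf(\GG)\cap\Linf(\hh{\GG})\bigr)=\B\bigl(\Ltwo(\GG)\bigr)\vtens\CC\I$, by the same standard fact from \cite[Page 131]{VaesPHD} that the paper quotes in the form $\Linf(\hh{\GG})\cap\Linf(\GG)'=\CC\I$. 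With that line added, your proof of \eqref{GGprop1} is complete.

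Part \eqref{GGprop2} has a genuine gap. From $\gamma\bigl(\Linf(\hh{\HH})\bigr)=\Linf(\dd{\XX})$ you assert that ``$\HH$ must be a diagonal copy of $\GG$'' whose universal morphism $\C^\uu_0(\GG)\tens\C^\uu_0(\GG^\op)\to\M\bigl(\C^\uu_0(\GG)\bigr)$ is ``essentially $a\tens b\mapsto aR(b)$'', and you derive commutativity from multiplicativity of that map. But the hypothesis only gives a normal $*$-isomorphism of von Neumann algebras $\Linf(\hh{\HH})\cong\Linf(\hh{\GG})$; it determines neither the comultiplication of $\hh{\HH}$ (which is the restriction of $\Delta_{\hh{\GG}\times\hh{\GG}'}$ to the coideal, transported back --- a priori \emph{not} $\Delta_{\hh{\GG}}$) nor the form of the associated universal surjection. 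So the $aR(b)$ computation, while the correct heuristic for why the diagonal is not a quantum subgroup, is never anchored to the actual hypothetical subgroup $\HH$; your closing sentence concedes that this ``bookkeeping'' is missing, and it is precisely the mathematical content of the statement. The paper avoids identifying $\HH$ altogether: from the invariance $\Delta_{\hh{\GG}\times\hh{\GG}'}\bigl(\Linf(\dd{\XX})\bigr)\subset\Linf(\dd{\XX})\vtens\Linf(\dd{\XX})$ (which you do derive correctly) it extracts, for every $x\in\Linf(\hh{\GG})$, some $y\in\Linf(\hh{\GG})\vtens\Linf(\hh{\GG})$ with $\bigl((\id\tens\sigma)\comp(\Delta_{\hh{\GG}}\tens\id)\comp\Delta_{\hh{\GG}}\bigr)(x)=(\Delta_{\hh{\GG}}\tens\id)(y)$, then applies $\Delta_{\hh{\GG}}\tens\id\tens\id$ and $\id\tens\Delta_{\hh{\GG}}\tens\id$ and uses coassociativity of the right-hand side to eliminate $y$, obtaining an identity of maps; evaluating that identity on the second leg of $\hh{W}$ yields $\hh{W}_{12}\hh{W}_{13}\hh{W}_{15}\hh{W}_{14}=\hh{W}_{12}\hh{W}_{15}\hh{W}_{13}\hh{W}_{14}$, and slicing the first legs gives $bd=db$ for all $b,d\in\Linf(\GG)$, i.e.\ $\GG$ classical. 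Some substitute for this computation is what your proposal still needs; the easy converse (classical $\Rightarrow$ quotient type, via Theorem \ref{classX}) is fine as you state it.
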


\begin{proof}
Ad \eqref{GGprop1}. Let us recall that $x\in\dd\XX$ if $x\in\Linf(\hh{\GG\times\GG^\op})=\Linf(\hh{\GG})\vtens\Linf(\hh{\GG})'$ and for any $y\in\Linf(\GG)$ we have $x\Delta_{\GG}(y)=\Delta_{\GG}(y)x$. This means that
\[
W^*xW(y\tens\I)=(y\tens\I)W^*xW
\]
for all $y\in\Linf(\hh{\GG})$. Since $W^*xW\in\Linf(\hh{\GG})\vtens\B\bigl(\Ltwo(\GG)\bigr)$ and $\Linf(\hh{\GG})\cap\Linf(\GG)'=\Linf(\hh{\GG}^\op)\cap\Linf(\hh{\hh{\GG}^\op})=\CC\I$ (see \cite[Page 131]{VaesPHD}) we see that that $W^*xW=\I\tens{z}$ for some $z\in\B\bigl(\Ltwo(\GG)\bigr)$.

Applying Lemma \ref{Dely} with $\hh{\GG}'$ instead of $\GG$ we obtain $z\in\Linf(\hh{\GG}')=\Linf(\hh{\GG})'$. Let $\widetilde{z}=\hh{J}z\hh{J}$. We see that
\[
x=W(J\tens\hh{J})(\I\tens\widetilde{z}\:\!)(J\tens\hh{J})W^*=(J\tens\hh{J})W^*(\I\tens\widetilde{z})W(J\tens\hh{J})
=(J\tens\hh{J})\Delta_{\hh{\GG}^\op}(\widetilde{z})(J\tens\hh{J}),
\]
where we used the identity $(J\tens\hh J)W(J\tens\hh J)=W^*$ (\cite[Theorem 5.11]{mnw}).

Ad \eqref{GGprop2}.
Let us first define a normal order two automorphism $\alpha\colon\B\bigl(\Ltwo(\GG)\bigr)\to\B\bigl(\Ltwo(\GG)\bigr)$ putting
\[
\alpha(t)=\hh{J}JtJ\hh{J}
\]
(the fact that $\alpha^2=\id$ follows immediately from \cite[Corollary 1.12]{KVvN}). We have the following relations (cf.~\cite[Remarks before Proposition 4.2]{KVvN}):
\begin{equation}\label{relalpha}
\begin{aligned}
\alpha\bigl(\Linf(\hh{\GG}^\op)\bigr)&=\Linf(\hh{\GG}'),&
\quad\Delta_{\hh{\GG}'}\comp\alpha&=(\alpha\tens\alpha)\comp\Delta_{\hh{\GG}^\op},\\
\alpha\bigl(\Linf(\GG^\op)\bigr)&=\Linf(\GG'),&
\quad\Delta_{\GG'}\comp\alpha&=(\alpha\tens\alpha)\comp\Delta_{\GG^\op}.
\end{aligned}
\end{equation}
To show the first one we compute remembering that $J$ implements $\hh{R}$:
\[
\begin{split}
\Delta_{\hh{\GG}'}\bigl(\alpha(x)\bigr)&=\Delta_{\hh{\GG^\op}}\bigl(\alpha(x)\bigr)\\
&=\Delta_{\hh{\GG^\op}}(\hh{J}JxJ\hh{J})\\
&=(\hh{J}\tens\hh{J})\Delta_{\hh{\GG}}(JxJ)(\hh{J}\tens\hh{J})\\
&=(\hh{J}J\tens\hh{J}J)\Delta_{\hh{\GG}^\op}(x)(J\hh{J}\tens{J}\hh{J})\\
&=(\alpha\tens\alpha)\bigl(\Delta_{\hh{\GG}^\op}(x)\bigr),
\end{split}
\]
where in the third equality we used the fact that
\[
\Delta_{\hh{\GG^\op}}(z)=(\hh{J}\tens\hh{J})\Delta_{\hh{\GG}}(\hh{J}z\hh{J})(\hh{J}\tens\hh{J})
\]
for all $z\in\Linf(\hh{\GG})'$. The other relation \eqref{relalpha} can be derived from the first one using duality.

Repeating the construction of $\XX$ (Proposition \ref{propXG}) for the quantum group $\hh{\GG}$ we obtain an embeddable quantum homogeneous space $\YY$. This means that
\[
\Linf(\YY)=\Delta_{\hh{\GG}}\bigl(\Linf(\hh{\GG})\bigr)\subset\Linf(\hh{\GG}\times\hh{\GG}^\op)\quad\text{and}
\quad\delta_{\YY}=\bigl.\Delta_{\hh{\GG}\times\hh{\GG}^\op}\bigr|_{\Linf(\YY)}.
\]
Using $\alpha$ we can define new embeddable quantum homogeneous spaces:
\begin{itemize}
\item[$\blacktriangleright$] $\XX_\alpha$ with $\Linf(\XX_\alpha)=(\id\tens\alpha)\bigl(\Linf(\XX)\bigr)\subset\Linf(\GG\times\GG')$,
\item[$\blacktriangleright$] $\YY_\alpha$ with $\Linf(\YY_\alpha)=(\id\tens\alpha)\bigl(\Linf(\YY)\bigr)\subset\Linf(\hh{\GG}\times\hh{\GG}')$,
\end{itemize}
We have
\begin{equation}\label{delalpha}
\begin{split}
\delta_{\XX_\alpha}\comp(\id\tens\alpha)&=\bigl((\id\tens\alpha)\tens(\id\tens\alpha)\bigr)\comp\delta_\XX,\\
\delta_{\YY_\alpha}\comp(\id\tens\alpha)&=\bigl((\id\tens\alpha)\tens(\id\tens\alpha)\bigr)\comp\delta_\YY.
\end{split}
\end{equation}
As before we only show the first relation of \eqref{delalpha}:
\[
\begin{split}
\Delta_{\hh{\GG}\times\hh{\GG}'}\bigl(\Linf(\dd{\XX})\bigr)
&=\Delta_{\hh{\GG}\times\hh{\GG}'}\Bigl((\id\tens\alpha)\bigl(\Delta_{\hh{\GG}}(\Linf(\hh{\GG}))\bigr)\Bigr)\\
&=\bigl((\id\tens\sigma\tens\id)\comp(\Delta_{\hh{\GG}}\tens\Delta_{\hh{\GG}'})\comp(\id\tens\alpha)
\comp\Delta_{\hh{\GG}}\bigr)\bigl(\Linf(\hh{\GG})\bigr)\\
&=\bigl((\id\tens\sigma\tens\id)\comp(\id\tens\id\tens\alpha\tens\alpha)\comp(\Delta_{\hh{\GG}}\tens\Delta_{\hh{\GG}^\op})
\comp\Delta_{\hh{\GG}}\bigr)\bigl(\Linf(\hh{\GG})\bigr)\\
&=\bigl((\id\tens\alpha\tens\id\tens\alpha)\comp(\id\tens\sigma\tens\id)\comp(\Delta_{\hh{\GG}}\tens\Delta_{\hh{\GG}^\op})
\comp\Delta_{\hh{\GG}}\bigr)\bigl(\Linf(\hh{\GG})\bigr)\\
&=\bigl((\id\tens\alpha\tens\id\tens\alpha)\comp\Delta_{\hh{\GG}\times\hh{\GG}^\op}\comp
\Delta_{\hh{\GG}}\bigr)\bigl(\Linf(\hh{\GG})\bigr),
\end{split}
\]
where we used the first line of \eqref{relalpha} in the third equality.

Now statement \eqref{GGprop1} of this proposition means that $\dd{\XX}=\YY_\alpha$. Similarly we have $\dd{\YY}=\XX_\alpha$. Moreover $\XX$ is of quotient type if and only if $\XX_\alpha$ is of quotient type. This is clear from Lemma \ref{charQT}: $\XX$ is of quotient type if and only if $\dd{\XX}$ has the property that $\Linf(\dd{\XX})=\gamma\bigl(\Linf(\hh{\HH})\bigr)$, where $\HH$ is a closed quantum subgroup of $\GG$ in the sense of Vaes and $\gamma\colon\Linf(\hh{\HH})\to\Linf(\hh{\GG\times\GG^\op})=\Linf(\hh{\GG}\times\hh{\GG}')$ is the corresponding normal inclusion (cf.~\cite{Vaes,subgroups}). Clearly $(\id\tens\alpha)\comp\gamma$ provides a normal inclusion making $\HH$ a closed quantum subgroup of $\GG\times\GG'$. Since the resulting embeddable quantum homogeneous space is $\YY$ we see that $\XX_\alpha$ is of quotient type.

We will now assume that $\XX_\alpha$ is of quotient type and show that $\GG$ must then be a classical group. It follows easily from this assumption that $\delta_\YY\bigl(\Linf(\YY)\bigr)\subset\Linf(\YY)\vtens\Linf(\YY)$. This means that for any $x\in\Linf(\hh{\GG})$ there exists $y\in\Linf(\hh{\GG})\vtens\Linf(\hh{\GG})$ such that
\begin{equation}\label{xy}
\Delta_{\hh{\GG}\times\hh{\GG}^\op}\bigl(\Delta_{\hh{\GG}}(x)\bigr)=(\Delta_{\hh{\GG}}\tens\Delta_{\hh{\GG}})(y).
\end{equation}
Using \eqref{sigma12} applied to $\hh{\GG}$ we compute
\[
\begin{split}
\Delta_{\hh{\GG}\times\hh{\GG}^\op}\comp\Delta_{\hh{\GG}}(x)
&=(\sigma\tens\id\tens\id)\comp(\id\tens\Delta_{\hh{\GG}}\tens\id)\comp(\id\tens\Delta_{\hh{\GG}})\comp\Delta_{\hh{\GG}^\op}\\
&=(\sigma\tens\id\tens\id)\comp(\id\tens\id\tens\Delta_{\hh{\GG}})\comp(\id\tens\Delta_{\hh{\GG}})\comp\Delta_{\hh{\GG}^\op}\\
&=(\id\tens\id\tens\Delta_{\hh{\GG}})\comp(\sigma\tens\id\tens\id)\comp(\id\tens\Delta_{\hh{\GG}})\comp\Delta_{\hh{\GG}^\op}\\
&=(\id\tens\id\tens\Delta_{\hh{\GG}})\comp(\id\tens\sigma)\comp(\Delta_{\hh{\GG}}\tens\id)\comp\Delta_{\hh{\GG}}.
\end{split}
\]
Thus \eqref{xy} reads as
\[
\begin{split}
\bigl((\id\tens\id\tens\Delta_{\hh{\GG}})\comp(\id\tens\sigma)\comp(&\Delta_{\hh{\GG}}\tens\id)\comp\Delta_{\hh{\GG}}\bigr)(x)\\
&=(\Delta_{\hh{\GG}}\tens\Delta_{\hh{\GG}})(y)=
\bigl((\id\tens\id\tens\Delta_{\hh{\GG}})\comp(\Delta_{\hh{\GG}}\tens\id)\bigr)(y)
\end{split}
\]
It follows that for any $x\in\Linf(\hh{\GG})$ there exists $y\in\Linf(\hh{\GG})\vtens\Linf(\hh{\GG})$ such that
\begin{equation}
\label{duzoDelt}
\bigl((\id\tens\sigma)\comp(\Delta_{\hh{\GG}}\tens\id)\comp\Delta_{\hh{\GG}}\bigr)(x)=(\Delta_{\hh{\GG}}\tens\id)(y).
\end{equation}
Now we note that since for any $y\in\Linf(\hh{\GG})\vtens\Linf(\hh{\GG})$ we have
\[
(\Delta_{\hh{\GG}}\tens\id\tens\id)\bigl((\Delta_{\hh{\GG}}\tens\id)(y)\bigr)=
(\id\tens\Delta_{\hh{\GG}}\tens\id)\bigl((\Delta_{\hh{\GG}}\tens\id)(y)\bigr),
\]
we can apply $\Delta_{\hh{\GG}}\tens\id\tens\id$ and $\id\tens\Delta_{\hh{\GG}}\tens\id$ to the left hand side of \eqref{duzoDelt} and get equal results. In other words, for any $x\in\Linf(\hh{\GG})$ we have
\[
\bigl((\Delta_{\hh{\GG}}\tens\id\tens\id)\comp(\id\tens\sigma)\comp(\Delta_{\hh{\GG}}\tens\id)\comp\Delta_{\hh{\GG}}\bigr)(x)
=\bigl((\id\tens\Delta_{\hh{\GG}}\tens\id)\comp(\id\tens\sigma)\comp(\Delta_{\hh{\GG}}\tens\id)\comp\Delta_{\hh{\GG}}\bigr)(x),
\]
i.e.
\begin{equation}\label{idhgcom}
(\Delta_{\hh{\GG}}\tens\id\tens\id)\comp(\id\tens\sigma)\comp(\Delta_{\hh{\GG}}\tens\id)\comp\Delta_{\hh{\GG}}
=(\id\tens\Delta_{\hh{\GG}}\tens\id)\comp(\id\tens\sigma)\comp(\Delta_{\hh{\GG}}\tens\id)\comp\Delta_{\hh{\GG}}
\end{equation}

Let $\hh{W}\in\Linf(\GG)\vtens\Linf(\hh{\GG})$ be the Kac-Takesaki operator for $\hh{\GG}$. Applying \eqref{idhgcom} to the second leg of $\hh{W}$ and remembering that $(\id\tens\Delta_{\hh{\GG}})(\hh{W})=\hh{W}_{12}\hh{W}_{13}$ we obtain
\begin{equation}\label{idhgcom1}
\hh{W}_{12}\hh{W}_{13}\hh{W}_{15}\hh{W}_{14}=\hh{W}_{12}\hh{W}_{15}\hh{W}_{13}\hh{W}_{14}.
\end{equation}
Since the first leg of $\hh{W}$ generates $\Linf(\GG)$, slicing appropriately equation \eqref{idhgcom1} we conclude that $abdc=adbc$ for any $a,b,c,d\in\Linf(\GG)$. Putting $a=c=\I$ we get $bd=db$ for any $d,b\in\Linf(\GG)$. Commutativity of $\Linf(\GG)$ is equivalent to $\GG$ being a classical group.
\end{proof}

\section{Strongly embeddable quantum homogeneous spaces}\label{strongSect}

Consider a classical locally compact group $G$ and its homogeneous space $X$. The space $X$ is equivariantly homeomorphic to the quotient space $G/H$, where $H$ is a closed subgroup of $G$. In particular we have a surjective continuous (and open) map $p\colon{G}\to{X}$ whose every fiber is homeomorphic to $H$. In other words we obtain a principal bundle $G\xrightarrow{p}{X}$ with structure group $H$. We say that the bundle $G\xrightarrow{p}{X}$ is \emph{trivial} if $G$ is homeomorphic to the Cartesian product $H\times{X}$ and this homeomorphism is a morphism of bundles over $X$ (classical theory of principal bundles is contained e.g.~in \cite[Chapter 4]{husemoller}). The triviality of $G\xrightarrow{p}{X}$ is equivalent to the existence of a continuous map $s\colon{X}\to{G}$ (called a \emph{cross-section} of the bundle) such that $p\comp{s}=\id$ (\cite[Corollary 8.3]{husemoller}).

In this section we consider certain conditions on an embeddable quantum homogeneous space which are analogs of the condition of triviality of the principal bundle discussed above. In the next definition we consider a morphism from a \cst-algebra $\sA$ to itself and assume it to be idempotent. This means that the canonical extension of the morphism to a map $\M(\sA)\to\M(\sA)$ is idempotent.

\begin{definition}\label{projem}
Let $\GG$ be a locally compact quantum group and $\XX$ an embeddable quantum homogeneous space. We say that $\XX$ is projectively embeddable if there exists a $\pi\in\Mor\bigl(\C_0(\GG),\C_0(\GG)\bigr)$ such that $\pi^2=\pi$, $\Delta_\GG\comp\pi=(\id\tens\pi)\comp\Delta_\GG$ and $\C_0(\XX)=\pi\bigl(\C_0(\GG)\bigr)$.
\end{definition}

It can be shown that, if $G$ is a classical locally compact group, then any projectively embeddable (quantum) homogeneous space for $G$ is a classical homogeneous space $X$ such that the principal bundle $G\to{X}$ is trivial. However, the condition of projective embeddability seems too strong for the non-commutative setting. It will be shown is Subsection \ref{azbsubs} that the quantum deformation of the action of the ``$az+b$'' group on its homogeneous space $\CC$ (which should certainly be considered as giving rise to a trivial principal bundle) is not projectively embeddable.

Before stating the condition replacing projective embeddability let us note the following proposition:

\begin{proposition}\label{se} Let $\GG$ be a locally compact quantum group and let $\sD\subset\M\bigl(\C_0(\GG)\bigr)$ be a \cst-subalgebra such that $\bigl[\Delta_{\GG}(\sD)(\C_0(\GG)\tens\I)\bigr]=\C_0(\GG)\tens\sD$. Let $U\in\M\bigl(\cK(\Ltwo(\GG))\tens\sD\bigr)$ be a unitary such that $\Delta_{\GG}(d)=U(d\tens\I)U^*$ for all $d\in\sD$ and let $\sN\subset\Linf(\GG)$ be the strong closure of $\sD$. Then defining $\XX$ so that $\Linf(\XX)=\sN$ and $\delta_\XX=\bigl.\Delta_\GG\bigr|_{\Linf(\XX)}$ we obtain an embeddable quantum homogeneous space $\XX$ and, moreover, $\C_0(\XX)=\sD$.
\end{proposition}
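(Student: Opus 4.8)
The plan is to check, one structure at a time, the requirements of Definitions \ref{DefQHS}, \ref{DefEQHS} and \ref{EWstGs}, running the argument of Proposition \ref{cocomp} but with the supplied unitary $U$ in place of the Kac--Takesaki operator $W$, so that the second tensor leg of every estimate stays inside $\sD$ rather than merely inside $\C_0(\GG)$. Throughout, $\sN$ denotes the strong closure of $\sD$; since $\sD\subset\M\bigl(\C_0(\GG)\bigr)\subset\Linf(\GG)$ we have $\sN\subset\Linf(\GG)$, and one checks that $\sN$ is unital with unit $\I$, so that $\bigl.\Delta_\GG\bigr|_{\sN}$ is unital.

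First I would verify that $\bigl(\sD,\bigl.\Delta_\GG\bigr|_\sD\bigr)$ is a \cst-quantum $\GG$-space. Coassociativity is inherited from $\Delta_\GG$, and injectivity of $\bigl.\Delta_\GG\bigr|_\sD$ follows from injectivity of $\Delta_\GG$ on $\Linf(\GG)$ (which holds because $W$ is unitary). The Podle\'s condition is exactly the stated identity $\bigl[\Delta_\GG(\sD)(\C_0(\GG)\tens\I)\bigr]=\C_0(\GG)\tens\sD$. For the morphism property $\bigl.\Delta_\GG\bigr|_\sD\in\Mor\bigl(\sD,\C_0(\GG)\tens\sD\bigr)$ I would argue directly from this identity: given $d\in\sD$, every element of $\C_0(\GG)\tens\sD$ is a norm-limit of sums $\sum_k\Delta_\GG(d_k)(a_k\tens\I)$ with $d_k\in\sD$, $a_k\in\C_0(\GG)$, and since $\sD$ is an algebra,
\[
\Delta_\GG(d)\Delta_\GG(d_k)(a_k\tens\I)=\Delta_\GG(dd_k)(a_k\tens\I)\in\bigl[\Delta_\GG(\sD)(\C_0(\GG)\tens\I)\bigr]=\C_0(\GG)\tens\sD,
\]
and letting the sums converge gives $\Delta_\GG(d)(\C_0(\GG)\tens\sD)\subset\C_0(\GG)\tens\sD$; applying this to $d^*$ and taking adjoints yields the right-handed inclusion, whence $\Delta_\GG(d)\in\M\bigl(\C_0(\GG)\tens\sD\bigr)$. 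Non-degeneracy of the morphism then follows by combining the Podle\'s identity with $\bigl[\sD\sD\bigr]=\sD$. In particular $\Delta_\GG(\sD)\subset\M(\C_0(\GG)\tens\sD)\subset\Linf(\GG)\vtens\sN$, so by normality of $\Delta_\GG$ we get $\Delta_\GG(\sN)\subset\Linf(\GG)\vtens\sN$; together with the standard fact that the only $x\in\Linf(\GG)$ satisfying $\Delta_\GG(x)=\I\tens x$ are the scalars, this exhibits $\XX$ as an ergodic embeddable \wst-quantum $\GG$-space.

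The main point, and where $U$ is genuinely needed, is condition (3) of Definition \ref{DefQHS}. For $x\in\sN$ both $\Delta_\GG(x)$ and $U(x\tens\I)U^*$ are $\sigma$-weakly continuous and agree on $\sD$, which is $\sigma$-weakly dense in $\sN$, so $\Delta_\GG(x)=U(x\tens\I)U^*$ for all $x\in\sN$; as $x\tens\I\in\M\bigl(\cK(\Ltwo(\GG))\tens\sD\bigr)$ (here $x\in\B(\Ltwo(\GG))=\M(\cK(\Ltwo(\GG)))$) and $U\in\M\bigl(\cK(\Ltwo(\GG))\tens\sD\bigr)$, this already gives $\Delta_\GG(\sN)\subset\M\bigl(\cK(\Ltwo(\GG))\tens\sD\bigr)$. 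For strictness, take a bounded net $(x_i)_{i\in\cI}$ in $\sN$ converging strong${}^*$ to $x$ and $y\in\cK(\Ltwo(\GG))\tens\sD$. Then $U^*y\in\cK(\Ltwo(\GG))\tens\sD$ has compact first leg, so
\[
\Delta_\GG(x_i)y=U(x_i\tens\I)(U^*y)\xrightarrow[i\in\cI]{}U(x\tens\I)(U^*y)=\Delta_\GG(x)y
\]
in norm, using the fact (recorded after Proposition \ref{cocomp}) that a bounded strong${}^*$-convergent net multiplied into a compact first leg converges in norm; the statement for adjoints is identical since $x_i^*\to x^*$ strongly. Had only $W\in\M\bigl(\cK(\Ltwo(\GG))\tens\C_0(\GG)\bigr)$ been available this computation would land in $\M\bigl(\cK(\Ltwo(\GG))\tens\C_0(\GG)\bigr)$, and keeping the second leg inside $\sD$ is precisely the role of the hypothesis on $U$. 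I expect this to be the only genuinely delicate step.

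Finally, conditions (1) and (2) of Definition \ref{DefQHS} hold by construction: $\sD$ is strongly dense in $\Linf(\XX)=\sN$, and $\bigl.\Delta_\GG\bigr|_\sD$ is the restriction of $\delta_\XX=\bigl.\Delta_\GG\bigr|_\sN$. Thus $\XX$ is a quantum homogeneous space, it is embeddable by the second paragraph, and $\bigl(\sD,\bigl.\Delta_\GG\bigr|_\sD\bigr)$ witnesses Definition \ref{DefQHS} with $\C_0(\YY)=\sD$; the uniqueness part of Theorem \ref{uniqC0X} then yields $\C_0(\XX)=\sD$.
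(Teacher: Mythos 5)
Your proof is correct and takes essentially the same route as the paper's: the key step --- using $U^*y\in\cK\bigl(\Ltwo(\GG)\bigr)\tens\sD$ to turn bounded strong${}^*$-convergence in $\sN$ into norm convergence after multiplication, hence obtaining $\Delta_\GG(\sN)\subset\M\bigl(\cK(\Ltwo(\GG))\tens\sD\bigr)$ and strictness --- is exactly the paper's argument, with $U$ playing the role that $W$ plays in Proposition \ref{cocomp}. The additional verifications you spell out (the morphism and Podle\'s conditions for $\bigl.\Delta_\GG\bigr|_\sD$, ergodicity, the extension of $\Delta_\GG(x)=U(x\tens\I)U^*$ from $\sD$ to $\sN$ by normality, and the appeal to Theorem \ref{uniqC0X} for $\C_0(\XX)=\sD$) are precisely what the paper compresses into ``checking the remaining conditions is easy,'' and you handle them correctly.
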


\begin{proof}
Let $(x_i)_{i\in\cI}$ be a bounded strong${}^*$-convergent net of elements of $\sN$ and take $y\in\cK\bigl(\Ltwo(\GG)\bigr)\tens\sD$. Since $Uy\in\cK\bigl(\Ltwo(\GG)\bigr)\tens\sD$, we see that the net $\bigl((x_i\tens\I)Uy\bigr)_{i\in\cI}$ is norm-convergent. We assumed that $\Delta_{\GG}(x_i)y=U(x_i\tens\I)U^*y$ for all $i$, and so it follows that the net $\bigl(\Delta_{\GG}(x_i)y\bigr)_{i\in\cI}$ is norm-convergent. Let now $y\in\sN$ and $(d_i)$ be a norm-bounded net of elements of $\sD$ converging in the strong${}^*$ topology to $y$. The above reasoning shows that
\begin{itemize}
\item[$\blacktriangleright$] $\Delta_\GG(y)$ is a strict limit of $\Delta_\GG(d_i)\in\M\bigl(\cK(\Ltwo(\GG)\tens\sD\bigr)$ and $\Delta_\GG(y)\in\M\bigl(\cK(\Ltwo(\GG))\tens\sD\bigr)$,
\item[$\blacktriangleright$] the map $\sN\ni{y}\mapsto\Delta_{\GG}(y)\in\M\bigl(\cK(\Ltwo(\GG))\tens\sD\bigr)$ is strict (cf.~\cite[Remark after Definition 3.1]{Vaes}).
\end{itemize}
Checking the remaining conditions of Definition \ref{DefQHS} is easy.
\end{proof}

\begin{definition}
Let $\XX$ be an embeddable quantum homogeneous space for a locally compact quantum group $\GG$. We say that $\XX$ is a \emph{strongly embeddable} quantum homogeneous space if there exists a unitary $U\in\M\bigl(\cK(\Ltwo(\GG))\tens\C_0(\XX)\bigr)$ such that $\delta_{\XX}(x)=U(x\tens\I)U^*$ for all $x\in\Linf(\XX)$.
\end{definition}

\begin{example}
Let $\GG$ be a locally compact quantum group and $\pi\in\Mor\bigl(\C_0(\GG),\C_0(\GG)\bigr)$ a morphism satisfying $\pi^2=\pi$ and $\Delta_{\GG}\comp\pi=(\id\tens\pi)\comp\Delta_{\GG}$. Then $\sD=\pi\bigl(\C_0(\GG)\bigr)$ and $U=(\id\tens\pi)W\in\M\bigl(\cK(\Ltwo(\GG))\tens\sD\bigr)$ satisfies the conditions of Proposition \ref{se}. Thus defining $\XX$ by setting $\Linf(\XX)=\sD''$ we obtain a strongly embeddable quantum homogeneous space $\XX$. In particular any projectively embeddable quantum homogeneous space for $\GG$ is strongly embeddable
\end{example}

The condition of strong embeddability appear to be a satisfactory analog of the condition that the principal bundle corresponding to the action is trivial. In the following subsections we discuss examples of strongly embeddable quantum homogeneous spaces.

\subsection{Bicrossed products of locally compact groups}\label{expbc}

In this subsection we shall study examples of quantum homogeneous spaces of quotient type provided by the bicrossed product construction. Its interesting aspect is related to the generic non-regularity of quantum groups arising in this construction (\cite{BSV}, recall that the general result on the  existence of the \cst-version of a given quotient type \wst-quantum $\GG$-space is based on the regularity of $\GG$). Our presentation of the bicrossed product construction closely follows \cite{BSV}. For earlier approaches see also \cite{Mbicross,VaesB}.

Let $G_1$, $G_2$ be a pair of closed subgroups of a locally compact group $G$ such that $G_1\cap{G_2}=\{e\}$ and the complement of $G_1G_2$ is of measure $0$. Let $\Linf(\hh{G}_1)$ denote the von Neumann algebra generated by right shifts on $\Ltwo(G_1)$. Thus $\Linf(\hh{G}_1)'$ is the von Neumann algebra generated by the left shifts. The same notation will be used with $G_1$ replaced by $G_2$.

Let $p_1\colon{G}\to{G_1}$ be the measurable map (defined almost everywhere) given by $p_1(g_1g_2)=g_1$. Similarly we have the map $p_2\colon{G}\to{G_2}$ such that $p_2(g_1g_2)=g_2$. Using $p_1$ and $p_2$ we introduce the pair of coactions
\[
\begin{split}
\alpha\colon\Linf(G_1)&\to\Linf(G_2)\vtens\Linf(G_1),\qquad\alpha(x)(g_2,g_1)=x\bigl(p_1(g_2g_1)\bigr),\\
\beta\colon\Linf(G_2)&\to\Linf(G_2)\vtens\Linf(G_1),\qquad\beta(y)(g_2,g_1)=y\bigl(p_2(g_2g_1)\bigr).
\end{split}
\]
We denote by $\GG$ the quantum group obtained from the above data via the bicrossed product construction. The von Neumann algebra $\Linf(\GG)$ is given by the crossed product
\begin{equation}\label{bcp}
\Linf(\GG)=G_2\ltimes_\alpha\Linf(G_1)
=\bigl\{\alpha(\Linf(G_1))\cup\Linf(\hh{G}_2)'\tens\I)\bigr\}''
\end{equation}
The coaction $\beta$ enters in the definition of the comultiplication $\Delta_\GG$. It turns out that the embedding $\alpha\colon\Linf(G_1)\to\Linf(\GG)$ is compatible with the comultiplications:
\begin{equation}\label{inter}
\Delta_{\GG}\circ\alpha=(\alpha\tens\alpha)\circ\Delta_{G_1}.
\end{equation}

\begin{proposition}\label{Xdef}
Define $\XX$ so that $\Linf(\XX)=\alpha\bigl(\Linf(G_1)\bigr)$. Then $\XX$ is a strongly embeddable quantum homogeneous space for $\GG$ with the \cst-version $\alpha\bigl(\C_0(G_1)\bigr)$.
\end{proposition}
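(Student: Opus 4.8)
The plan is to derive the whole statement from the strong-embeddability criterion of Proposition \ref{se}, whose conclusion already carries the implementing unitary that witnesses strong embeddability. Accordingly I set $\sD=\alpha\bigl(\C_0(G_1)\bigr)$ and try to verify the two hypotheses of Proposition \ref{se} for this $\sD$: that $\sD$ is a \cst-subalgebra of $\M\bigl(\C_0(\GG)\bigr)$ with $\bigl[\Delta_\GG(\sD)(\C_0(\GG)\tens\I)\bigr]=\C_0(\GG)\tens\sD$, and that there is a unitary $U\in\M\bigl(\cK(\Ltwo(\GG))\tens\sD\bigr)$ with $\Delta_\GG(d)=U(d\tens\I)U^*$ for $d\in\sD$. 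Once these hold, Proposition \ref{se} hands us an embeddable quantum homogeneous space with $\Linf(\XX)=\sD''=\alpha\bigl(\Linf(G_1)\bigr)$ and \cst-version $\C_0(\XX)=\sD=\alpha\bigl(\C_0(G_1)\bigr)$, and the same $U$ shows it is strongly embeddable.

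The engine throughout is the intertwining relation \eqref{inter}, $\Delta_\GG\comp\alpha=(\alpha\tens\alpha)\comp\Delta_{G_1}$. The inclusion $\sD\subset\M\bigl(\C_0(\GG)\bigr)$ and the factorization $\C_0(\GG)=[\sA\sD]=[\sD\sA]$, where $\sA\subset\M\bigl(\C_0(\GG)\bigr)$ is the \cst-algebra produced by the $G_2$-factor of the crossed product \eqref{bcp}, I would read off from the bicrossed product construction of \cite{BSV}. For the density condition I would first transport the cancellation identity $\bigl[\Delta_{G_1}(\C_0(G_1))(\C_0(G_1)\tens\I)\bigr]=\C_0(G_1)\tens\C_0(G_1)$ through the isometric homomorphism $\alpha\tens\alpha$, using \eqref{inter} to rewrite $(\alpha\tens\alpha)\comp\Delta_{G_1}$ as $\Delta_\GG\comp\alpha$, which yields $\bigl[\Delta_\GG(\sD)(\sD\tens\I)\bigr]=\sD\tens\sD$. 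Multiplying on the right by $\sA\tens\I$ and using $\C_0(\GG)=[\sD\sA]$ then gives $\bigl[\Delta_\GG(\sD)(\C_0(\GG)\tens\I)\bigr]=[(\sD\tens\sD)(\sA\tens\I)]=[\sD\sA\tens\sD]=\C_0(\GG)\tens\sD$, as required.

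The construction of $U$ is the heart of the matter. The natural candidate is the transport of the Kac--Takesaki operator $W^{G_1}\in\M\bigl(\cK(\Ltwo(G_1))\tens\C_0(G_1)\bigr)$ of $G_1$ along $\alpha$. Because $\alpha$ is the coaction $\alpha(x)(g_2,g_1)=x\bigl(p_1(g_2g_1)\bigr)$, it is spatially implemented on $\Ltwo(\GG)=\Ltwo(G_2)\tens\Ltwo(G_1)$: the untangling map $(g_2,g_1)\mapsto\bigl(p_2(g_2g_1),p_1(g_2g_1)\bigr)$ is a measure isomorphism (its inverse is the $G_2G_1$-decomposition, and $G_2G_1=(G_1G_2)^{-1}$ is again co-null), so it provides a unitary $V$ on $\Ltwo(\GG)$ with $\alpha(x)=V(\I_{\Ltwo(G_2)}\tens x)V^*$. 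Setting $\tilde\alpha(T)=V(\I_{\Ltwo(G_2)}\tens T)V^*$ extends $\alpha$ to a normal unital $*$-homomorphism on $\B\bigl(\Ltwo(G_1)\bigr)$ that restricts to a nondegenerate morphism $\cK(\Ltwo(G_1))\to\M\bigl(\cK(\Ltwo(\GG))\bigr)$. I then put $U=(\tilde\alpha\tens\alpha)(W^{G_1})\in\M\bigl(\cK(\Ltwo(\GG))\tens\sD\bigr)$, a unitary as the image of a unitary under a tensor product of nondegenerate morphisms. Since $\tilde\alpha$ and $\alpha$ agree on $\Linf(G_1)$, multiplicativity together with \eqref{inter} give, for $x\in\Linf(G_1)$,
\[
U(\alpha(x)\tens\I)U^*=(\tilde\alpha\tens\alpha)\bigl(W^{G_1}(x\tens\I)(W^{G_1})^*\bigr)=(\alpha\tens\alpha)\bigl(\Delta_{G_1}(x)\bigr)=\Delta_\GG\bigl(\alpha(x)\bigr),
\]
and by normality this relation passes from $\sD$ to $\Linf(\XX)=\sD''$.

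The hard part is precisely manufacturing $U$ with its first leg living in $\M\bigl(\cK(\Ltwo(\GG))\bigr)$ rather than in some amplification $\M\bigl(\cK(\sH\tens\Ltwo(\GG))\bigr)$: the abstract spatial implementation of a normal injective homomorphism, as used in the proof of Proposition \ref{cvhProp}, only produces the latter, so one is forced to exploit the explicit geometric form of $\alpha$ and the self-untangling of the matched pair to realize $\alpha$ on $\Ltwo(\GG)$ itself. This hands-on route is unavoidable here because a $\GG$ arising from a bicrossed product need not be regular, so the general existence theorem for $\C_0(\GG/\HH)$ does not apply; the delicate verification is that $U$ is a genuine multiplier of $\cK(\Ltwo(\GG))\tens\sD$ and not merely a bounded operator, after which the remaining axioms of Definitions \ref{DefQHS} and \ref{DefEQHS} are routine.
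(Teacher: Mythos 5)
Your proposal is correct, and its core construction coincides with the paper's proof: your unitary $U=(\tilde\alpha\tens\alpha)(W^{G_1})$, once $\tilde\alpha$ is written as conjugation by the untangling unitary $V$, is exactly $V_{12}\bigl((\id\tens\alpha)(W^{G_1})\bigr)_{234}V_{12}^*$, which is the paper's operator $T=U_{12}\bigl((\id\tens\alpha)(W^{G_1})\bigr)_{234}U_{12}^*$ with the canonical unitary implementation of $\alpha$ (the quasi-regular representation of $G_2$ on $\Ltwo(G/G_2)\cong\Ltwo(G_1)$, cf.~\cite{unitImpl} and \cite[Proof of Proposition 3.6]{BSV}) replaced by your point-transformation unitary; either choice works, since all that matters is having \emph{some} unitary on $\Ltwo(\GG)$ itself, rather than on an amplification $\sH\tens\Ltwo(\GG)$, implementing $\alpha$ --- precisely the crux your last paragraph isolates. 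Where you genuinely diverge is in establishing the embeddable-quantum-homogeneous-space structure. The paper obtains it in one stroke from Proposition \ref{cvhProp}, whose only input is that $\alpha$ is a normal injective $*$-homomorphism satisfying \eqref{inter} (equivalently, that $\hh{G}_1$ is a closed quantum subgroup of $\hh{\GG}$ in the sense of Vaes), and then invokes the explicit unitary only to upgrade to strong embeddability. You instead run everything through Proposition \ref{se}, which obliges you to verify the Podle\'s condition for $\sD=\alpha\bigl(\C_0(G_1)\bigr)$ by hand; your verification (transporting the cancellation law of $G_1$ through the isometry $\alpha\tens\alpha$ and multiplying by $\sA\tens\I$) is sound, but it consumes an extra structural input, namely the factorization $\C_0(\GG)=[\sD\sA]=[\sA\sD]$ identifying $\C_0(\GG)$ with the reduced crossed product, imported from \cite{BSV}. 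The trade-off: your route delivers the quantum homogeneous space, its \cst-version and strong embeddability in a single application of Proposition \ref{se} (with strictness built into that proposition, and with the Podle\'s condition genuinely checked rather than, as in the proof of Proposition \ref{cvhProp}, asserted to be checkable directly), while the paper's route is insensitive to the \cst-algebraic description of $\C_0(\GG)$ and relies only on the von Neumann-level datum \eqref{inter}.
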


\begin{proof}
Using Proposition \ref{cvhProp} with $\gamma=\alpha$ we see that $\XX$ indeed is an embeddable quantum homogeneous space for $\GG$. In order to prove that $\XX$ is strongly embeddable let us consider the canonical unitary implementation of $\alpha$ (\cite{unitImpl}, cf.~\cite[Proof of Proposition 3.6]{BSV}):
\[
U\in\M\bigl(\C_0(G_2)\tens\cK(\Ltwo(G_1))\bigr)\subset\B\bigl(\Ltwo(G_2)\tens\Ltwo(G_1)\bigr),\qquad\alpha(x)=U(\I\tens{x})U^*.
\]
More precisely, the unitary operator $U$ is given by the restriction to $G_2$ of the quasi-regular representation of $G$ on $\Ltwo(G/G_2)$ i.e.~$U\colon{G_2}\ni{g}\mapsto{U_g}\in\B\bigl(\Ltwo(G/G_2)\bigr)$. In order to interpret $U$ as an element $\M\bigl(\C_0(G_2)\tens\cK(\Ltwo(G_1))\bigr)$ we use the identification $\Ltwo(G/G_2)\cong\Ltwo(G_1)$. Using further identifications $\Ltwo(\GG)\cong\Ltwo(G)\cong\Ltwo(G_2)\tens\Ltwo(G_1)$ we note that for any $x\in\C_0(G_1)$ we have
\[
\begin{split}
\Delta_{\GG}\bigl(\alpha(x)\bigr)&=(\alpha\otimes\alpha)\bigl(W^{G_1}(x\tens\I)(W^{G_1})^*\bigr)\\
&=U_{12}\bigl((\id\tens\alpha)(W^{G_1})\bigr)_{234}U_{12}^*\bigl(\alpha(x)\tens\I\bigr)U_{12}
\bigl((\id\tens\alpha)(W^{G_1})\bigr)^*_{234}U_{12}^*\\
&=T\bigl(\alpha(x)\tens\I\bigr)T^*,
\end{split}
\]
where $W^{G_1}$ is the Kac-Takesaki operator of $G_1$ and
\[
T=U_{12}\bigl((\id\tens\alpha)(W^{G_1})\bigr)_{234}U_{12}^*\in\M\bigl(\cK(\Ltwo(\GG))\tens\alpha(\C_0(G_1))\bigr).
\]
Summarizing we conclude that $\XX$ is a strongly embeddable quantum homogeneous space with \cst-version $\C_0(\XX)=\alpha\bigl(\C_0(G_1)\bigr)$.
\end{proof}

The dual of $\GG$ is obtained by exchanging the roles of $G_1$ and $G_2$ so that
\[
\Linf(\hh{\GG})=\bigl\{\beta(\Linf(G_2))\cup\I\tens\Linf(\hh{G}_1))\bigr\}''.
\]
In particular we see that $\hh{G}_2$ is a closed quantum subgroup of $\GG$ in the sense of Vaes via the morphism $\beta\colon\Linf(G_2)\to\Linf(\hh{\GG})$. In what follows we will show that $\XX=\GG/\hh{G}_2$ where $\XX$ was described in Proposition \ref{Xdef}.

\begin{proposition}
The strongly embeddable quantum homogeneous space $\XX$ defined in Proposition \ref{Xdef} is of quotient type: $\Linf(\XX)=\Linf(\GG/\hat{G}_2)$.
\end{proposition}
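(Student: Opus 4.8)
The plan is to invoke Lemma \ref{charQT} with the closed quantum subgroup $\HH=\hh{G}_2$, whose dual $\hh\HH=G_2$ is included in $\hh\GG$ through the morphism $\beta$ (this inclusion was recalled just before the statement). By that lemma, combined with Theorem \ref{ddddT}, it suffices to identify the co-dual of $\XX$ with the image of $\Linf(\hh\HH)=\Linf(G_2)$ in $\Linf(\hh\GG)$; that is, the whole proof reduces to establishing
\[
\Linf(\dd\XX)=\beta\bigl(\Linf(G_2)\bigr).
\]
By \eqref{corem} the left-hand side is the relative commutant $\Linf(\hh\GG)\cap\alpha\bigl(\Linf(G_1)\bigr)'$, because $\Linf(\XX)=\alpha\bigl(\Linf(G_1)\bigr)$ by Proposition \ref{Xdef}. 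Once this equality is in hand, Lemma \ref{charQT} shows that $\GG/\hh{G}_2$ is the co-dual of $\dd\XX$, and Theorem \ref{ddddT} then gives $\Linf(\GG/\hh{G}_2)=\Linf(\dd{\dd\XX})=\Linf(\XX)$, as required.

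The inclusion $\beta\bigl(\Linf(G_2)\bigr)\subseteq\Linf(\dd\XX)$ is the easy half. The defining formulas for $\alpha$ and $\beta$ exhibit both $\alpha\bigl(\Linf(G_1)\bigr)$ and $\beta\bigl(\Linf(G_2)\bigr)$ as $*$-subalgebras of the \emph{abelian} algebra $\Linf(G)=\Linf(G_2)\vtens\Linf(G_1)$ of multiplication operators on $\Ltwo(\GG)\cong\Ltwo(G_2)\tens\Ltwo(G_1)$. Hence they commute, so $\beta\bigl(\Linf(G_2)\bigr)\subseteq\alpha\bigl(\Linf(G_1)\bigr)'$; together with $\beta\bigl(\Linf(G_2)\bigr)\subseteq\Linf(\hh\GG)$ (immediate from the displayed description of $\Linf(\hh\GG)$ given just before the statement) this yields the claim.

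The reverse inclusion $\Linf(\dd\XX)\subseteq\beta\bigl(\Linf(G_2)\bigr)$ is where the work lies. Here I would pass to the realization on $\Ltwo(G)$ in which a point of $G$ is identified with the pair $\bigl(p_1(g),p_2(g)\bigr)\in G_1\times G_2$. Under this identification $\alpha\bigl(\Linf(G_1)\bigr)$ becomes the multiplication algebra of functions depending only on $p_1$ (equivalently, constant on the right $G_2$-cosets), $\beta\bigl(\Linf(G_2)\bigr)$ becomes the functions depending only on $p_2$, and $\I\tens\Linf(\hh{G}_1)$ becomes the von Neumann algebra generated by the right translations $R_s\xi(h)=\xi(hs)$, $s\in G_1$. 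Thus $\Linf(\hh\GG)$ is the crossed product of $\beta\bigl(\Linf(G_2)\bigr)$ by this right-translation action of $G_1$. Decomposing an element $y$ of the relative commutant over the action and commuting it past a multiplication operator $m_f$ with $f$ constant on right $G_2$-cosets, the commutation relation forces the component of $y$ attached to $s\in G_1$ to be supported on $\{g:p_1(gs)=p_1(g)\}$; writing the matched pair product $p_2(g)\,s=\alpha_s(p_2(g))\,\beta_s(p_2(g))$ with $\alpha_s(\cdot)\in G_1$ and $\beta_s(\cdot)\in G_2$, one computes $p_1(gs)=p_1(g)\,\alpha_s(p_2(g))$, so this set is $\{g:\alpha_s(p_2(g))=e\}$.

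The decisive point, and the step I expect to be the main obstacle, is to convert this support condition into the vanishing of all components with $s\neq e$. This is exactly where the standing hypothesis $G_1\cap G_2=\{e\}$ enters: if $\alpha_s(t)=e$ for some $t\in G_2$, then $ts=\beta_s(t)\in G_2$, whence $s=t^{-1}(ts)\in G_2\cap G_1=\{e\}$. Consequently for every $s\neq e$ the relevant set is empty and the corresponding component of $y$ vanishes, leaving $y\in\beta\bigl(\Linf(G_2)\bigr)$. Making this rigorous for a general \emph{non-discrete} $G_1$ is the genuinely analytic part: one cannot use a literal Fourier sum or a bounded conditional expectation onto the base, and must instead appeal to the crossed-product relative-commutant machinery (operator-valued weights / spatial techniques), using the transversality of the $G_1$-action to the $G_2$-fibration just established. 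This analytic input, rather than the algebraic bookkeeping, is the real content of the proof.
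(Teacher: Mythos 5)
Your global strategy is sound: reducing the proposition, via Lemma \ref{charQT} and Theorem \ref{ddddT}, to the single identity $\Linf(\dd{\XX})=\beta\bigl(\Linf(G_2)\bigr)$ is legitimate (and is essentially the same use of the Lemma \ref{charQT} reasoning that the paper makes), and your proof of the inclusion $\beta\bigl(\Linf(G_2)\bigr)\subseteq\Linf(\dd{\XX})$ is correct. The gap is exactly where you locate it, and it is genuine: for non-discrete $G_1$ there is no decomposition of an element of the crossed product $\Linf(\hh{\GG})$ into ``components attached to $s\in G_1$'', so your support argument --- correct as a pointwise computation --- cannot even be formulated at the operator-algebra level, and the concluding appeal to unspecified ``operator-valued weights / spatial techniques'' is a placeholder rather than a proof. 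As written, the inclusion $\Linf(\dd{\XX})\subseteq\beta\bigl(\Linf(G_2)\bigr)$, which you yourself identify as the entire content of the proposition, is not established.

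The missing idea is far more elementary than the machinery you invoke, and it is how the paper argues (the paper works on the $\Linf(\GG)$ side, showing directly that the commutant of $\beta\bigl(\Linf(G_2)\bigr)$ inside $\Linf(\GG)=G_2\ltimes_\alpha\Linf(G_1)$ is $\alpha\bigl(\Linf(G_1)\bigr)$; by the symmetry of the matched pair the same trick closes your dual-side formulation verbatim). The key is that $\Linf(G_2)\vtens\Linf(G_1)$ is \emph{maximal abelian} in $\B\bigl(\Ltwo(G_2)\tens\Ltwo(G_1)\bigr)$. Take $y\in\Linf(\dd{\XX})$. Since $\Linf(\hh{\GG})=\bigl\{\beta(\Linf(G_2))\cup\I\tens\Linf(\hh{G}_1)\bigr\}''\subset\Linf(G_2)\vtens\B\bigl(\Ltwo(G_1)\bigr)$, the element $y$ automatically commutes with $\Linf(G_2)\tens\I$; by assumption it commutes with $\alpha\bigl(\Linf(G_1)\bigr)$. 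Now $\alpha\bigl(\Linf(G_1)\bigr)$ and $\Linf(G_2)\tens\I$ together generate all of $\Linf(G_2)\vtens\Linf(G_1)$: identifying $\Ltwo(G_2)\tens\Ltwo(G_1)$ with $\Ltwo(G)$ and writing $g=p_1(g)p_2(g)=q_2(g)q_1(g)$ for the two a.e.-defined decompositions, these are the functions of $p_1(g)$ and of $q_2(g)$ respectively, and the map $g\mapsto\bigl(p_1(g),q_2(g)\bigr)$ is injective a.e., because $p_1(g)=p_1(h)$ and $q_2(g)=q_2(h)$ force $h^{-1}g\in G_2\cap G_1=\{e\}$. (This is where the hypothesis $G_1\cap G_2=\{e\}$ does its real work --- not in your pointwise computation with $\alpha_s$ and $\beta_s$; it is the mirror image of the generation fact $\bigl\{\beta(\Linf(G_2))\cup\I\tens\Linf(G_1)\bigr\}''=\Linf(G_2)\vtens\Linf(G_1)$ used in the paper.) Hence $y$ commutes with a maximal abelian von Neumann algebra and therefore lies in it; in particular $y$ commutes with $\I\tens\Linf(G_1)$, so, proceeding as in the proof of Theorem \ref{ddddT}, $y$ is invariant under the dual action on the crossed product $\Linf(\hh{\GG})$, and the fixed point algebra of that dual action is exactly $\beta\bigl(\Linf(G_2)\bigr)$ by \cite[Theorem 2.7]{unitImpl}. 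No Fourier decomposition, conditional expectation or weight theory is needed: maximal abelianness is the analytic input you were looking for.
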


\begin{proof}
Using the reasoning of the proof of Lemma \ref{charQT} we see that $x\in\Linf(\GG/\hat{G}_2)$ if and only if $x\in\Linf(\GG)$ and $x$ commutes with $\beta\bigl(\Linf(G_2)\bigr)$ (the commutation holds in $\B\bigl(\Ltwo(G_2)\tens\Ltwo(G_1)\bigr)$). Since $x\in{G_2}\ltimes_\alpha\Linf(G_1)$ and the second leg of $G_2\ltimes_\alpha\Linf(G_1)$ commutes with $\Linf(G_1)$ (see \eqref{bcp}), we conclude that $x$ also commutes with $\I\vtens\Linf(G_1)$. From the fact that $\Linf(G_2)\vtens\Linf(G_1)$ is generated by $\beta\bigl(\Linf(G_2)\bigr)$ and $\I\vtens\Linf(G_1)$ it follows that $x$ belongs to the commutant of $\Linf(G_2)\vtens\Linf(G_1)$. Since the von Neumann algebra $\Linf(G_2)\vtens\Linf(G_1)$ is a maximal commutative subalgebra of $\B\bigl(\Ltwo(G_2)\tens\Ltwo(G_1)\bigr)$ w get $x\in\Linf(G_2)\vtens\Linf(G_1)$. Proceeding as in the proof of Theorem \ref{ddddT} we see that $x$ is invariant under the dual action on $G_2\ltimes_\alpha\Linf(G_1)$. This shows that $\Linf(\GG/\hat{G}_2)\subset\alpha\bigl(\Linf(G_1)\bigr)$ and since the opposite inclusion is clear, we get the equality $\Linf(\XX)=\alpha\bigl(\Linf(G_1)\bigr)=\Linf(\GG/\hat{G}_2)$.
\end{proof}

\subsection{Quantum homogeneous space of the quantum ``$az+b$'' group}\label{azbsubs}

In this subsection let $\GG$ be the quantum ``$az+b$'' group (see \cite{azb,nazb} and \cite[Section 1]{PuszSol1} for a general approach) for an admissible value of the deformation parameter (cf.~\cite[Section 1]{PuszSol1}). In \cite[Section 3]{Psol} and \cite{PuszSol0} a construction of a quantum space $\YY$ endowed with a continuous action of $\GG$ was carried out. The space $\YY$ turned out to be a classical space (the \cst-algebra $\C_0(\YY)$ is commutative) and $\YY$ was called a ``homogeneous space'' for $\GG$. We will shed more light on this example.

Let $\delta_{\YY}\in\Mor\bigl(\C_0(\YY),\C_0(\GG)\tens\C_0(\YY)\bigr)$ be the action of $\GG$ on $\YY$. By the results of \cite{azb,nazb,Psol} we have
\begin{equation}\label{eFlabel}
\delta_\YY(x)=U(x\tens\I)U^*,
\end{equation}
where $U\in\M\bigl(\cK(\Ltwo(\GG))\tens\C_0(\YY)\bigr)$ is defined using the \emph{quantum exponential function} (\cite[Section 1]{PuszSol1}) and its form depends on the value of the deformation parameter (see also \cite[Proof of Theorem 3.1]{Psol}).

As $\C_0(\YY)$ comes equipped with an embedding into $\B\bigl(\Ltwo(\GG)\bigr)$ we can immediately put ``measurable structure'' on $\YY$ by setting $\Linf(\YY)=\C_0(\YY)''$. It is fairly clear form \cite[Section 3]{Psol} (cf.~also \cite{PuszSol0}) that there is a classical (in fact abelian) locally compact group $\Gamma$ such that $\Gamma$ is a closed quantum subgroup of $\GG$. Using standard facts about crossed products by abelian groups one easily sees that $\Linf(\YY)$ is precisely the set of those $x\in\Linf(\GG)$ which commute with the image of the associated inclusion $\Linf(\hh{\Gamma})\hookrightarrow\Linf(\GG)$. This means that $\YY=\GG/\Gamma$. In particular we have:

\begin{proposition}
$\YY$ is a strongly embeddable quantum homogeneous space for $\GG$ of quotient type.
\end{proposition}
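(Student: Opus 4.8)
The plan is to read everything off from Proposition \ref{se}, for which the unitary $U$ of \eqref{eFlabel} is tailor-made; this route deliberately sidesteps any appeal to Vaes' existence theorem for $\C_0(\GG/\Gamma)$, which is not available here because the quantum ``$az+b$'' group need not be regular. First I would set $\sD=\C_0(\YY)$ and check the hypotheses of Proposition \ref{se}. Since the analysis preceding the statement identifies $\YY$ with $\GG/\Gamma$, the action $\delta_\YY$ is the restriction of $\Delta_{\GG}$ to $\C_0(\YY)$; feeding this into \eqref{eFlabel} gives $\Delta_{\GG}(d)=U(d\tens\I)U^*$ for all $d\in\sD$, which is exactly the implementation hypothesis of Proposition \ref{se}, with $U\in\M\bigl(\cK(\Ltwo(\GG))\tens\sD\bigr)$. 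The remaining density requirement $\bigl[\Delta_{\GG}(\sD)(\C_0(\GG)\tens\I)\bigr]=\C_0(\GG)\tens\sD$ is just the Podle\'s condition for the \cst-action $\delta_\YY$ built in \cite{Psol,PuszSol0}, rewritten through $\delta_\YY=\bigl.\Delta_{\GG}\bigr|_{\sD}$.

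Granting these hypotheses, Proposition \ref{se} delivers that $\YY$, with $\Linf(\YY)=\C_0(\YY)''$ and $\delta_\YY=\bigl.\Delta_{\GG}\bigr|_{\Linf(\YY)}$, is an embeddable quantum homogeneous space whose \cst-version is precisely $\C_0(\YY)$. Strong embeddability is then automatic: the single unitary $U\in\M\bigl(\cK(\Ltwo(\GG))\tens\C_0(\YY)\bigr)$ implements $\delta_\YY$ on the strongly dense subalgebra $\C_0(\YY)$, and as both $\Delta_{\GG}$ and $x\mapsto U(x\tens\I)U^*$ are normal they must agree on the whole of $\Linf(\YY)=\C_0(\YY)''$, so $\delta_\YY(x)=U(x\tens\I)U^*$ holds for every $x\in\Linf(\YY)$, as the definition demands. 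Quotient type is then nothing more than the already-established equality $\YY=\GG/\Gamma$ for the (classical, abelian) closed quantum subgroup $\Gamma$ of $\GG$: this is Definition \ref{defqtw} applied with $\HH=\Gamma$ at the \wst-level, hence Definition \ref{qtype} at the level of quantum homogeneous spaces, and it matches the commutant characterization of $\Linf(\GG/\Gamma)$ from the proof of Lemma \ref{charQT}, since $\Linf(\YY)$ was shown to be the relative commutant of the image of $\Linf(\hh{\Gamma})$ in $\Linf(\GG)$.

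The one genuinely non-formal step is the verification of the hypotheses of Proposition \ref{se} --- concretely, that $\C_0(\YY)\subset\M\bigl(\C_0(\GG)\bigr)$ and that $\delta_\YY$ really is the restriction of the comultiplication rather than merely some abstract, spatially implemented action. Both of these rest on the explicit crossed-product/commutant description of $\YY$ indicated before the statement together with the concrete shape of $U$ coming from the quantum exponential function; once the identification $\YY=\GG/\Gamma$ is secured and $U$ is matched with $\Delta_{\GG}$, the rest is bookkeeping. I therefore expect the principal effort to go into making that identification precise, after which strong embeddability and quotient type follow as described.
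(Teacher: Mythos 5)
Your proposal is correct and follows essentially the same route as the paper: the paper's argument is exactly the discussion preceding the proposition, namely the unitary implementation \eqref{eFlabel} fed (implicitly) into Proposition \ref{se} to get strong embeddability with \cst-version $\C_0(\YY)$, plus the commutant description of $\Linf(\YY)$ identifying it with $\Linf(\GG/\Gamma)$ (via the reasoning of Lemma \ref{charQT}) for quotient type. Your only deviation is presentational --- you phrase the equality $\delta_\YY=\bigl.\Delta_{\GG}\bigr|_{\C_0(\YY)}$ as following from $\YY=\GG/\Gamma$, whereas it is part of the concrete construction in \cite{Psol,PuszSol0} from which the identification $\YY=\GG/\Gamma$ is then deduced --- but you correctly flag this verification as the non-formal input in your final paragraph.
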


The classical ``$az+b$'' group is a semidirect product of $\CC$ by the action of $\CC^\times=\CC\setminus\{0\}$. It follows that if $G$ denotes the classical ``$az+b$'' group then the bundle $G\to{G/\CC^\times}$ is trivial. In particular the homogeneous space $G/\CC^\times=\CC$ is projectively embeddable. Interestingly this is not the case on the quantum level.

\begin{proposition}\label{nonProj}
Let $\GG$ be the quantum ``$az+b$'' group for non-trivial deformation parameter (so that $\GG$ is not the classical ``$az+b$'' group). Then the quantum homogeneous space $\YY$ for $\GG$ is not projectively embeddable.
\end{proposition}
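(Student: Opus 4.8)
The plan is to argue by contradiction, assuming that $\YY$ is projectively embeddable and deriving a structural consequence that forces $\GG$ to coincide with the classical ``$az+b$'' group. Suppose there is a $\pi\in\Mor\bigl(\C_0(\GG),\C_0(\GG)\bigr)$ with $\pi^2=\pi$, $\Delta_{\GG}\comp\pi=(\id\tens\pi)\comp\Delta_{\GG}$ and $\C_0(\YY)=\pi\bigl(\C_0(\GG)\bigr)$. The first step is to exploit the intertwining relation: iterating $\Delta_{\GG}\comp\pi=(\id\tens\pi)\comp\Delta_{\GG}$ together with coassociativity shows that $\pi$ is a \emph{right} quantum-group map in a strong sense, and in particular that the range $\C_0(\YY)=\pi(\C_0(\GG))$ is a coideal sitting inside $\M\bigl(\C_0(\GG)\bigr)$ that is the image of a \emph{conditional-expectation-like} idempotent morphism. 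Because the comultiplication on $\pi(\C_0(\GG))$ is the restriction of $\Delta_{\GG}$, this realizes $\C_0(\YY)$ not merely as an abstract coideal but as a \emph{complemented} one, split off $\C_0(\GG)$ by the idempotent $\pi$.

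Next I would bring in the concrete description of the action from \eqref{eFlabel}, $\delta_\YY(x)=U(x\tens\I)U^*$, where $U$ is built from the quantum exponential function and the associated closed (classical, abelian) subgroup $\Gamma$ with $\YY=\GG/\Gamma$. The key point is that $\C_0(\YY)$ is commutative (it is a classical space), while $\C_0(\GG)$ is genuinely non-commutative for a non-trivial deformation parameter. A projective embedding would give an equivariant idempotent morphism $\pi$ of $\C_0(\GG)$ onto a commutative corner; the existence of such a $\pi$ should be translatable into the existence of an equivariant \emph{cross-section} morphism in the quantum ``$az+b$'' picture. I would make this explicit by examining how $\pi$ must act on the canonical generators $a,b$ (or their affiliated unbounded versions) of $\C_0(\GG)$: the commutation relation between $a$ and $b$ involves the deformation parameter, and the requirements that $\pi$ be a $*$-homomorphism, idempotent, and $\Delta_{\GG}$-equivariant should pin down $\pi$ on these generators up to a choice that is only consistent with the generators commuting.

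The decisive step, and the main obstacle, is to show that these constraints cannot be met unless the $a$--$b$ commutation relation is trivial, i.e.\ unless the deformation parameter is trivial and $\GG$ is the classical ``$az+b$'' group. Concretely I expect the argument to run as follows: equivariance of $\pi$ forces the section to respect the $\Gamma$-grading coming from the $\CC^\times$-part of the group, so $\pi$ restricted to the $b$-generator must produce an element of the commutative algebra $\C_0(\YY)$ whose behaviour under the scaling by $a$ is fixed; but idempotency $\pi^2=\pi$ together with the deformed relation then yields an algebraic identity in $\C_0(\GG)$ that is satisfiable only when $a$ and $b$ commute. The hard part is handling the unbounded generators and the quantum exponential function rigorously---one must work with the affiliated elements and the precise form of $U$ from \cite{PuszSol1,Psol,PuszSol0}, rather than with bounded functional calculus alone---so the calculation verifying the contradiction is where the real work lies. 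Once the commutation relation is forced to be trivial, we contradict the standing assumption that the deformation parameter is non-trivial, and conclude that no such $\pi$ exists; hence $\YY$ is not projectively embeddable.
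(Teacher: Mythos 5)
Your overall strategy (contradiction, exploiting that $\C_0(\YY)$ is commutative while $\C_0(\GG)$ is not, and the structure coming from the subgroup $\Gamma$ with $\YY=\GG/\Gamma$) points in the right direction, but the proposal has a genuine gap at exactly the decisive moment: the claim that idempotency, equivariance and the $*$-homomorphism property, when tested on the generators $a,b$, can only be satisfied when $a$ and $b$ commute is asserted, never derived. You say yourself that this ``calculation verifying the contradiction is where the real work lies,'' and that work is absent; what you have is a plan, and it is not clear it can be executed as stated. Note in particular that equivariance plus idempotency alone impose almost nothing (the identity morphism satisfies both), so everything must hinge on the range of $\pi$ being the commutative algebra $\C_0(\YY)$; your sketch does not isolate a concrete mechanism by which commutativity of the range collides with the deformed relations, and carrying out such a computation rigorously with the unbounded affiliated generators and the quantum exponential function would be quite delicate.

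The paper's proof supplies precisely the missing mechanism, and it requires no generator computations at all (indeed, the equivariance of $\pi$ is never used). The key fact is that $\C_0(\GG)$ is the crossed product of $\C_0(\YY)$ by the action of $\Gamma$, so this action becomes \emph{inner} in $\M\bigl(\C_0(\GG)\bigr)$: it is implemented by unitary multipliers $u_\gamma$. Applying $\pi$ to the covariance relation $u_\gamma y u_\gamma^*=\gamma\cdot y$ (for $y\in\C_0(\YY)$) sends the unitaries $u_\gamma$ into the commutative algebra $\M\bigl(\C_0(\YY)\bigr)$, where conjugation by them is trivial; hence $\pi$ restricted to $\C_0(\YY)$ must take values in the fixed-point subalgebra of $\M\bigl(\C_0(\YY)\bigr)$ for the $\Gamma$-action, and that subalgebra is $\CC\I$. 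On the other hand $\pi^2=\pi$ forces $\pi$ to be the identity on its range $\C_0(\YY)$. These two requirements are incompatible, which is the contradiction. If you want to repair your argument, this is the structural input to aim for: innerness of the $\Gamma$-action inside the crossed product replaces any analysis of the generators $a$ and $b$.
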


\begin{proof}
Assume that we have a morphism $\pi\in\Mor\bigl(\C_0(\GG),\C_0(\GG)\bigr)$ satisfying conditions of Definition \ref{projem}. In particular $\pi$ maps $\C_0(\GG)$ into a commutative \cst-algebra $\C_0(\YY)$. It is known that $\C_0(\GG)$ is a crossed product of $\C_0(\YY)$ by an action of the group $\Gamma$ mentioned above (see \cite{azb,nazb,PuSoFunct,PuszSol0,PuszSol1} for various descriptions of this fact). Since the original action is inner in the crossed product, the morphism $\pi$ restricted to $\C_0(\YY)$ would have to map into the fixed point subalgebra (of the action of $\Gamma$) of $\M\bigl(\C_0(\YY)\bigr)$. But this subalgebra is equal to $\CC\I$. On the other hand $\pi$ must be the identity on $\C_0(\YY)$. Hence we arrive at a contradiction.
\end{proof}

As mentioned in the proof of proposition \ref{nonProj} the \cst-algebra $\C_0(\GG)$ is a crossed product of $\C_0(\YY)$ by an action of $\Gamma$. This situation is explicitly considered as a ``trivial quantum principal bundle'' by some authors (see e.g.~\cite[Definition 2.2]{HaZi}). This underlines our point that the principal bundle corresponding to the action of $\GG$ on $\YY$ should be considered as a non-commutative ``trivial bundle'' and strengthens the case that the notion of strong embeddability captures this phenomenon.

\section*{Acknowledgements}
The authors wish to thank Matthew Daws, Adam Skalski, and Stanis{\l}aw Woronowicz for stimulating discussions on the subject of this work.

\end{document}